\numberwithin{equation}{section}
\theoremstyle{plain}
\newtheorem{theorem}{Theorem}[section]
\newtheorem{lemma}[theorem]{Lemma}
\newtheorem{conjecture}[theorem]{Conjecture}
\theoremstyle{definition}
\newtheorem{definition}[theorem]{Definition}
\newtheorem{condition}[theorem]{Condition}
\newtheorem{example}[theorem]{Example}
\theoremstyle{remark}
\newtheorem{remark}[theorem]{Remark}
\newtheorem{case[theorem]}{Case}
\newcommand*{\e}[1]{\text{e}^{#1}}
\definecolor{blue}{rgb}{0,0,1}
\definecolor{red}{rgb}{1,0,.2}
\DeclareSymbolFont{extraup}{U}{zavm}{m}{n}
\DeclareMathSymbol{\varheart}{\mathalpha}{extraup}{86}
\DeclareMathSymbol{\vardiamond}{\mathalpha}{extraup}{87}
\newcommand{\alert}{}
\newcommand{\structure}{}
\newcommand{\yboxduma}{}
\newcommand{\ybox}{}
\newcommand{\pause}{}
\newcommand{\cb}{}
\newcommand{\clrgreen}{}
\date{{\today}}
\begin{document}

\pagestyle{myheadings}

\title[Dimension theory of some non - Markovian repellers]{Dimension Theory of some non-Markovian repellers Part I: A gentle introduction}

\author{Bal\'azs B\'ar\'any}
\address[Bal\'azs B\'ar\'any]{Budapest University of Technology and Economics, Department of Stochastics, MTA-BME Stochastics Research Group, P.O.Box 91, 1521 Budapest, Hungary}
\email{balubsheep@gmail.com}

\author{Micha\l\ Rams}
\address[Micha\l\ Rams]{Institute of Mathematics, Polish Academy of Sciences, ul. \'Sniadeckich 8, 00-656 Warszawa, Poland}
\email{rams@impan.pl}

\author{K\'aroly Simon}
\address[K\'aroly Simon]{Budapest University of Technology and Economics, Department of Stochastics, Institute of Mathematics, 1521 Budapest, P.O.Box 91, Hungary} \email{simonk@math.bme.hu}

\subjclass[2010]{Primary 28A80 Secondary 28A78}
\keywords{Self-affine measures, self-affine sets, Hausdorff dimension.}
\thanks{The research of B\'ar\'any and Simon was partially supported by the grant OTKA K123782. B\'ar\'any acknowledges support also from NKFI PD123970 and the J\'anos Bolyai Research Scholarship of the Hungarian Academy of Sciences. Micha\l\ Rams was supported by National Science Centre grant 2014/13/B/ST1/01033 (Poland). This work was partially supported by the  grant  346300 for IMPAN from the Simons Foundation and the matching 2015-2019 Polish MNiSW fund.}

\begin{abstract}
Michael Barnsley introduced a family of  fractals sets which are repellers of
piecewise affine systems. The study of these fractals was motivated by certain problems that arose in fractal image compression but the results we obtained can be applied for the computation of the Hausdorff dimension
of the graph of some functions, like generalized Takagi functions and fractal interpolation functions.

In this paper we introduce this class of fractals and present the tools in the one-dimensional dynamics and nonconformal fractal theory that are needed to investigate them.  This is the first part in a series of two papers. In the continuation  there will be more proofs and we apply the tools introduced here to study some fractal function graphs.

\end{abstract}

\date{\today}
\maketitle

\section{Introduction}

This is a paper in the intersection of fractal geometry and dynamical systems. Dynamical systems provide us with beautiful and interesting examples of sets, fractal geometry gives us the language to describe them, and both theories give us tools. Tools to understand the geometric properties of those sets, tools to understand the dynamical properties, and most interesting of all -- the relations between the two.

This is a paper about tools. Yeah, sure, we will prove some theorem eventually (in the second part of this paper) -- but it is just a pretext. Our real goal is to describe the process of understanding the geometric behaviour of a dynamical system, starting from understanding the simplest possible models (conformal uniformly hyperbolic iterated function systems with separation properties) and then throwing out the training wheels, until we get to piecewise affine maps with quite general symbolic description (not necessarily subshifts of finite type).

And, most of all, this is a survey. While the simple models are in the books (the classical positions by Falconer \cite{falconer2004fractal} and by Mattila \cite{mattila1999geometry}), the modern theory of affine iterated function systems is not in books yet, and neither is Hofbauer's theory. We aren't going to be able to describe all the details, for sure, but we try to at least provide the main ideas and most useful formulas, and also the literature for further reading.

Fine, let's present the hero of our story.

\section{Barnsley's skew product maps}\label{z49}

  \begin{wrapfigure}{R}{4cm}
\centering
  \includegraphics[width=5cm]{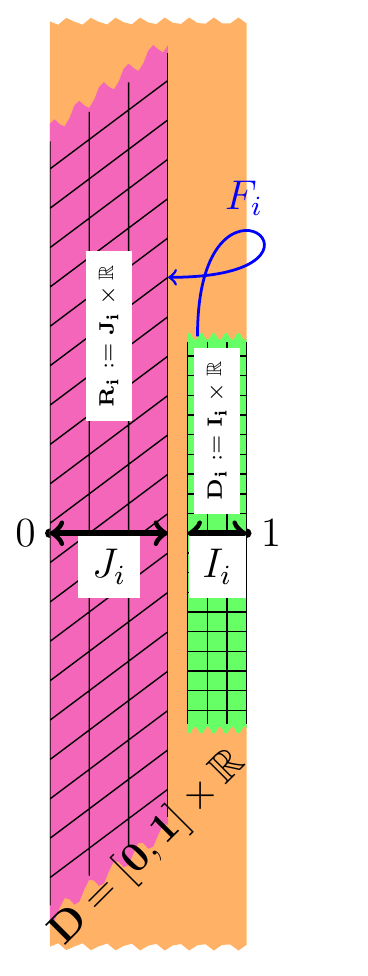}
  \label{z45}
\end{wrapfigure}

In order to define a piecewise affine and piecewise expanding skew product map $F$ on the plane which sends the vertical stripe
 $D:=[0,1]\times \mathbb{R}$ into itself, first
we partition the unit interval $\ybox{[0,1]=\bigsqcup\limits_{i=1}^{m} I_i}$.
Then we define $F:D\to D$ by
 \begin{flalign}\label{z46}
F(x,y):=F_i(x,y) \mbox{ if }(x,y)\in
  \alert{D_i:=I_i\times \mathbb{R}},&&
\end{flalign}
where for all $i=1, \dots ,m$
\begin{flalign}\label{z47}
   F_i(x,y):=(\alert{f_i(x)},\structure{g_i(x,y)}),\
  \mbox{ for } (x,y)\in D_i &&
 \end{flalign}
 and
 $f_i:I_i\to J_i \subset [0,1]$ (see Figure \ref{z51})
and $g_i:D_i\to\mathbb{R}$ and for $|\lambda_i|,|\gamma_i|>1$ let
 \begin{flalign}\label{z48}
 f_i(x):=\gamma_ix+v_i,\  g_i(x,y)=a_ix+\lambda_iy+t_i.
 &&
 \end{flalign}
Throughout this note we always assume:\newline
\textbf{Principal assumption} The map $f:[0,1]\to[0,1]$
 \begin{flalign}\label{y93}
f(x):=f_i(x),\mbox{ if } x\in I_i
\quad \mbox{ is transitive,}
&&
\end{flalign}
 that is $f$ has an orbit which is dense in $[0,1]$.
We call the repeller of $F:D\to D$ (which is the graph of a function) \yboxduma{Barnsley repeller} and we denote it by $\Lambda$.
We call $F$ Barnsley's skew product map.
Let $\mathfrak{S}=\bigcup_{i=1}^M\partial I_i$ the singularity set and let $\mathfrak{S}_\infty=\bigcup_{n=0}^{\infty}
f^{-n}(\mathfrak{S})$.
It was pointed out by Barnsley that $\Lambda$ is the graph of a function $G:[0,1]\setminus \mathfrak{S}_\infty:\to\mathbb{R}$
which is defined by
 \begin{flalign}\label{z34}
G(x)=z\text{, where }\{F^n(x,z)\}_{n=1}^{\infty}\text{ is bounded.}&&
\end{flalign}

  \begin{figure}
  \centering
  \includegraphics[width=6cm]{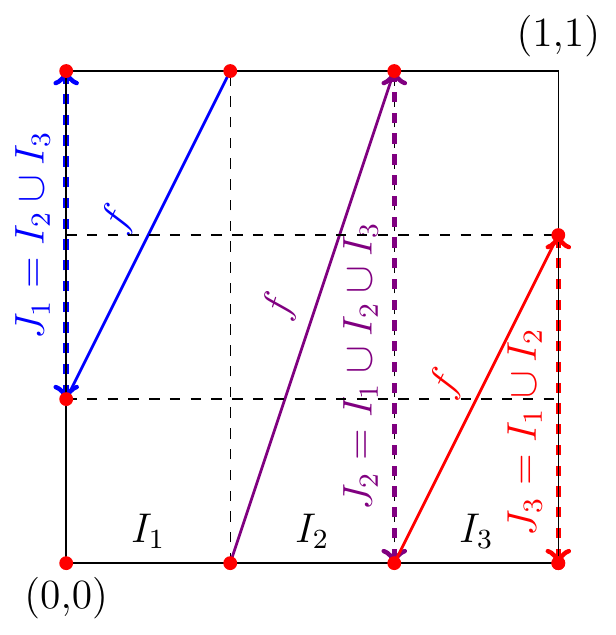}\qquad
\includegraphics[width=6cm]{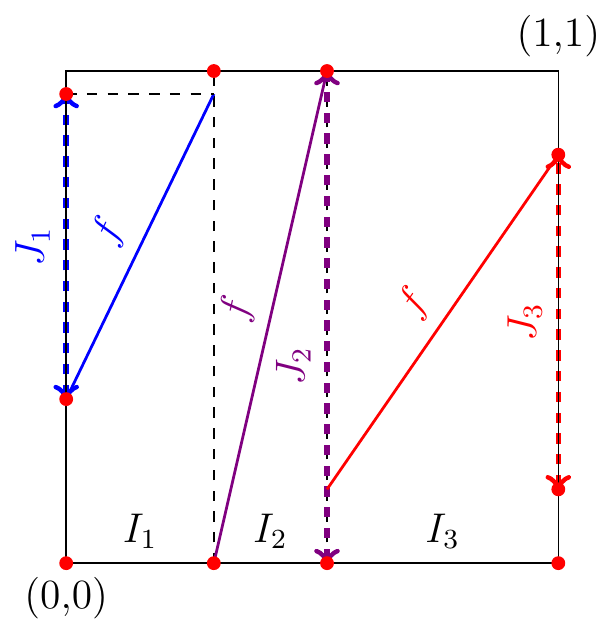}
\caption{$f$ is Markov on the left hand-side and non-Markov on the righ-hand side.}\label{z51}
\end{figure}


\section{The Hausdorff and box dimensions}
For a $d \geq 1$ let
 $A \subset \mathbb{R}^d$ be a set of zero Lebesgue measure and let $\nu$
 be a measure which is singular with respect to the Lebesgue measure $\mathcal{L}_d$.
 Then the size of $A$  and $\nu$ can be expressed by their  fractal dimensions.
\subsection{Fractal dimensions of sets}
The most common fractal dimensions are the Hausdorff and the  box dimensions:

 \begin{definition}[Hausdorff dimension]\label{z99}
   Let $A \subset \mathbb{R}^d$. then
 \begin{equation}\label{z98}
   \dim_{\rm H} A :=\inf\left\{\alpha:
   \forall \varepsilon>0, \exists \left\{U_i\right\}_{i=1}^{\infty }, \mbox{ such that }
  A \subset \bigcup\limits_{i=1}^{\infty }U_i,\
   \sum\limits_{i=1}^{\infty }|U_i|^\alpha<\varepsilon
   \right\},
 \end{equation}
 where $|U_i|$ is the diameter of $U$.
 \end{definition}
Equivalently in a more traditional way we can first define the $t$-dimensional Hausdorff measure
\begin{equation}\label{z97}
\mathcal{H}^t(A )=%
\sup\limits_{\delta \to 0}%
\inf\left\{
\boxed{\sum\limits_{i=1}^{\infty }|E_i|^t}:%
\Lambda \subset \bigcup_{i=1}^{\infty } E_i,%
|E_i|<\delta
\right\},
\end{equation}
then we write see (Figure \ref{z95})
\begin{equation}\label{z98}
    \dim_{\rm H} A :=\inf\left\{t:\mathcal{H}^t(A )=0 \right\}
    =\sup\left\{t:\mathcal{H}^t(A )=\infty \right\}.
\end{equation}
\begin{figure}[h!]
\includegraphics[height=5cm]{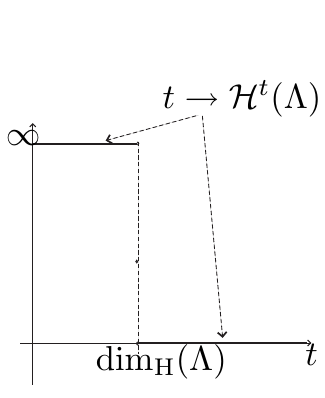}
\label{z95}
\end{figure}

Another very popular notion of fractal dimension is the box dimension:
\begin{definition}\label{z96}

 $ \dim_{\rm B}A$
\end{definition}

Let $E\subset \mathbb{R}^d$, $E\ne\emptyset $, bounded.
$N_\delta (E)$ be the smallest number of sets of diameter $\delta $ which can cover $E$. Then the lower and upper box dimensions of $E$:
\begin{equation}\label{15}
  \underline{\dim}_{\rm B}(E):=\liminf_{r\to 0}\frac{\log N_\delta (E)}{-\log \delta },
\end{equation}

\begin{equation}\label{16}
  \overline{\dim}_{\rm B}(E):=\limsup_{r\to 0}\frac{\log N_\delta (E)}{-\log \delta }.
\end{equation}

If the limit exists then we call it the box dimension of $E$ and we denote it by $\dim_{\rm B}(E) $.

\subsection{Hausdorff dimension of measures}
The Hausdorff dimension of a measure $\mu$ is the best lower bound on the Hausdorff dimension of a sets having large $\mu$ measures. Depending on what
"large" means we define

\begin{definition}\label{z89}
Let $\mu$ be a Borel measure on $\mathbb{R}^d$ such that
$0<\mu(\mathbb{R}^d)<\infty $.
  \begin{description}
  \item[(a)] Lower Hausdorff dimension of $\mu$ is:
  \alert{$\mathrm{dim}_*(\mu):=\inf\left\{\dim_{\rm H} A: \mu(A)>0\right\}$},
  \item[(b)] \structure{Upper  Hausdorff dimension of $\mu$:}
\alert{$\mathrm{dim}^*(\mu):=\inf\left\{\dim_{\rm H} A: \mu(A^c)=0\right\}$}.
\item[(c)] The lower and the upper local dimension of the measure $\mu$ are:
 \begin{equation}\label{O63}
 \alert{ \underline{\dim}(\mu,x):=  \liminf\limits_{r\to 0} \frac{\log \mu(B(x,r))}{\log r}}
  \end{equation}
  and
  \begin{equation}\label{z85}
 \structure{\overline{\dim}(\mu,x):=     \limsup\limits_{r\to 0} \frac{\log \mu(B(x,r))}{\log r}}
  \end{equation}
  We say that the measure $\mu$ is exact dimensional if for $\mu$-almost all $x$
   $\lim\limits_{r\downarrow 0} \frac{\log \mu(B(x,r))}{\log r}$ exists and equals to a constant.
\end{description}
\end{definition}
\begin{lemma}\label{z86}
Let $\mu$ be a measure like in \eqref{z89}. Then
  \begin{equation}\label{z90}
     \alert{\dim_*\mu=\mathrm{ess inf}_{x\sim \mu}\underline{\dim}(\mu,x)}, \quad
     \structure{\dim^*\mu=\mathrm{ess sup}_{x\sim \mu}\underline{\dim}(\mu,x)}
  \end{equation}
\end{lemma}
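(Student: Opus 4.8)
The plan is to reduce the statement to two classical comparison principles between pointwise local dimensions and Hausdorff dimension, and then to a short computation with essential infima and suprema. I would first establish the two facts: \textbf{(L1)} if $A\subset\mathbb{R}^d$ is Borel, $\mu(A)>0$ and $\underline{\dim}(\mu,x)\ge s$ for all $x\in A$, then $\dim_{\rm H}A\ge s$; and \textbf{(L2)} if $A\subset\mathbb{R}^d$ is Borel and $\underline{\dim}(\mu,x)\le s$ for all $x\in A$, then $\dim_{\rm H}A\le s$.

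For (L1) I would invoke the mass distribution principle. Fixing $t<s$, the sets $A_k:=\{x\in A:\ \mu(B(x,r))\le r^t\ \text{for all}\ 0<r<1/k\}$ increase to $A$, so $\mu(A_k)>0$ for some $k$; then for any cover $\{U_i\}$ of $A_k$ with $|U_i|<1/(2k)$, choosing $x_i\in U_i\cap A_k$ and using $U_i\subset B(x_i,2|U_i|)$ gives $\sum_i|U_i|^t\ge 2^{-t}\sum_i\mu(U_i)\ge 2^{-t}\mu(A_k)>0$, so $\mathcal{H}^t(A_k)>0$ and $\dim_{\rm H}A\ge\dim_{\rm H}A_k\ge t$; let $t\uparrow s$. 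For (L2) I would fix $t>s$: for every $x\in A$ the inequality $\underline{\dim}(\mu,x)\le s<t$ produces arbitrarily small radii $r$ with $\mu(B(x,r))>r^t$, so the family of all such balls of radius $<\delta$ covers $A$; a Vitali-type (``$5r$'') covering lemma extracts from it a pairwise disjoint subfamily $\{B(x_i,r_i)\}$ with $A\subset\bigcup_i B(x_i,5r_i)$, whence $\sum_i(10r_i)^t\le 10^t\sum_i\mu(B(x_i,r_i))\le 10^t\,\mu(\mathbb{R}^d)<\infty$; letting $\delta\to 0$ gives $\mathcal{H}^t(A)<\infty$, hence $\dim_{\rm H}A\le t$, and then $t\downarrow s$. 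The only genuinely delicate step in the whole argument is this last one: converting the cover by ``fat'' balls into an honest countable cover with a controlled $t$-dimensional sum, which is exactly where a covering lemma of Vitali/Besicovitch type and the finiteness $\mu(\mathbb{R}^d)<\infty$ are used. Both (L1) and (L2) are classical; see e.g.\ \cite{falconer2004fractal}.

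Granting (L1)--(L2), the four inequalities are bookkeeping, and throughout one uses that $x\mapsto\underline{\dim}(\mu,x)$ is a Borel function, so that all the level sets below are measurable. Put $s_0:=\mathrm{ess\,inf}_{x\sim\mu}\underline{\dim}(\mu,x)$ and $s_1:=\mathrm{ess\,sup}_{x\sim\mu}\underline{\dim}(\mu,x)$. If $\mu(A)>0$ and $t<s_0$ then $A\cap\{\underline{\dim}(\mu,\cdot)\ge t\}$ has the same $\mu$-measure as $A$, so (L1) gives $\dim_{\rm H}A\ge t$; letting $t\uparrow s_0$ yields $\dim_*\mu\ge s_0$. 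Conversely, for $t>s_0$ the Borel set $A_t:=\{\underline{\dim}(\mu,\cdot)<t\}$ has $\mu(A_t)>0$ and, by (L2), $\dim_{\rm H}A_t\le t$, so $\dim_*\mu\le t$; let $t\downarrow s_0$. The inequalities for $\dim^*\mu$ are obtained in the same way, with ``$\mu(A)>0$'' replaced by ``$\mu(A^c)=0$'': for $t>s_1$ the set $A_t=\{\underline{\dim}(\mu,\cdot)<t\}$ has full measure and $\dim_{\rm H}A_t\le t$ by (L2), giving $\dim^*\mu\le t$; and for an arbitrary full-measure set $A$ and any $t<s_1$, the set $A\cap\{\underline{\dim}(\mu,\cdot)>t\}$ has positive measure by definition of the essential supremum, so (L1) gives $\dim_{\rm H}A\ge t$, giving $\dim^*\mu\ge s_1$. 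Letting $t\to s_1$ completes the argument.
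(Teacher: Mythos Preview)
The paper does not actually supply a proof of this lemma; it is stated as a known fact and the text moves directly to the next section. Your argument is correct and is the standard one found in the references the paper cites (e.g.\ Falconer's textbook): the two comparison principles you call (L1) and (L2) are the usual Frostman/Billingsley lemmas, and the deduction of the four inequalities from them via level sets of $x\mapsto\underline{\dim}(\mu,x)$ is exactly how the result is normally established. One small cosmetic point: in your sketch of (L1) you use that the sets $A_k$ are Borel before you explicitly invoke the Borel measurability of $x\mapsto\underline{\dim}(\mu,x)$; since you do note this measurability later, there is no real gap.
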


\section{Self-similar Sets}

 From now on we work on $\mathbb{R}^d$. Let  $m \geq 2$ and
  $O_1, \dots ,O_m\in O(d)$ orthogonal matrices and $r_1, \dots ,r_m\in(0,1)$ and $t_1, \dots ,t_m\in\mathbb{R}^d$.
  Then
  \begin{equation}\label{O74}
   \ybox{ \mathcal{S}:=\left\{S_i(x)=r_i \cdot O_i x+t_i\right\}_{i=1}^{m}}
  \end{equation}
is called a  self-similar Iterated Function System on $\mathbb{R}^d$.

\begin{figure}[H]
  \centering
  \includegraphics[width=15cm]{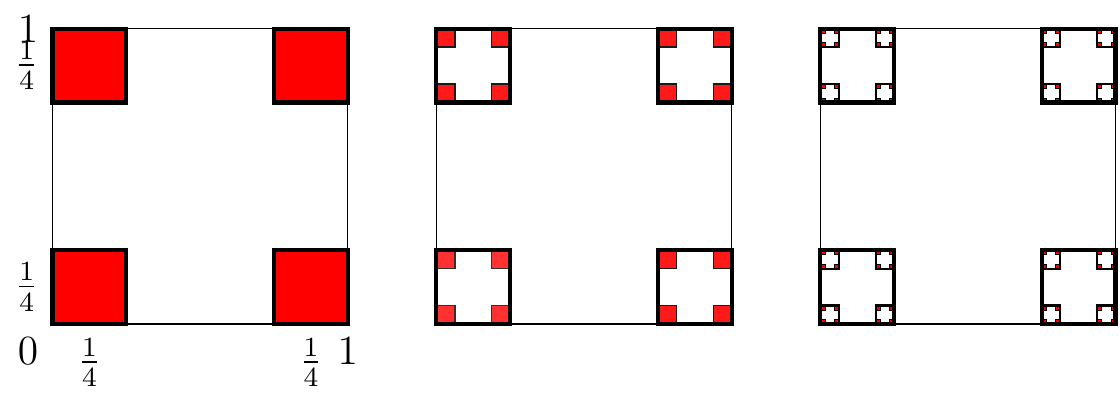}
\caption{The Four-Corner Cantor set $C\left(\frac{1}{4}\right)$}
\label{z94}
\end{figure}

Let $B:=B(x,R)$ be a closed ball, where $R$ is large. Then
\begin{equation}\label{O25}
  \forall i=1, \dots ,m:\quad
\alert{S_i(B) \subset B}.
\end{equation}
Hence the the following is a nested sequence of compact sets:
$$
\left\{\alert{\bigcup_{i_1 \dots i_n}S_{i_1 \dots i_n}B}\right\}_{n=1}^{\infty },
$$
where we use throughout the paper the notation: $S_{i_1 \dots i_n}:=S_{i_1}\circ \cdots \circ S_{i_n} $.
The \yboxduma{attractor} of  our IFS $\mathcal{S}$ is
\begin{equation}\label{O26}
\ybox{\alert{\Lambda}:=  \bigcap_{n=1}^{\infty }\bigcup_{i_1 \dots i_n}S_{i_1 \dots i_n}B},
\end{equation}
which is independent of $B$ as long as $B$ satisfies \eqref{O25}.
\begin{example}[Four Corner Set]\label{z93}
  Figure \ref{z94} shows the first three iterations of a famous self-similar set, called the Four Corner Cantor set.  Here  $B=[0,1]^2$ and
  $$
  S_i(x,y)=\frac{1}{4}(x,y)+\mathbf{t}_i, \mbox{ for }
  \mathbf{t}_1=(0,0),\
  \mathbf{t}_2=\left(\frac{3}{4},0\right),
   \mathbf{t}_3=\left(\frac{3}{4},\frac{3}{4}\right),
    \mathbf{t}_3=\left(0,\frac{3}{4}\right).
  $$

\end{example}
In the general case,
we code the points of the attractor by the elements of the
 symbolic space:
\begin{equation}\label{O23}
  \Sigma:=\left\{1, \dots ,m\right\}^\mathbb{N}.
\end{equation}

The \yboxduma{natural projection} is
$\Pi:\Sigma\to\Lambda$:
\begin{equation}\label{O22}
\ybox{\Pi(\mathbf{i}):= \lim\limits_{n\to\infty} S_{i_1 \dots i_n}(0).}
\end{equation}
On Figures \ref{z92} and \ref{z91} we indicate how this coding  works.
\begin{figure}[H]
  \includegraphics[width=12cm]{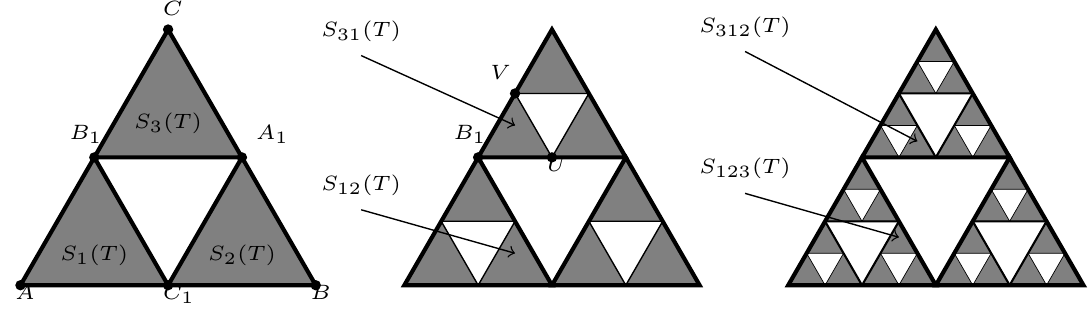}\\
  \caption{the Sierpi\'nski gasket: $S_{312}(x):=S_3\circ S_1\circ S_2(x)=S_3(S_1(S_2(x)))$}\label{z92}
\end{figure}
$S_i$ are translations of the appropriate \yboxduma{homothety-transformatons} of the form: $$\ybox{S_i(x)=\frac{1}{2}x+t_i}.$$
The sets $\left\{S_i(T)\right\}_{i=1}^{3}$ in the previous examples ar the first cylinders, the sets $\left\{S_{i,j}(T)\right\}_{i,j=1}^{3}$ are the second cylinders an so on.
\begin{figure}[H]
  \includegraphics[width=12cm]{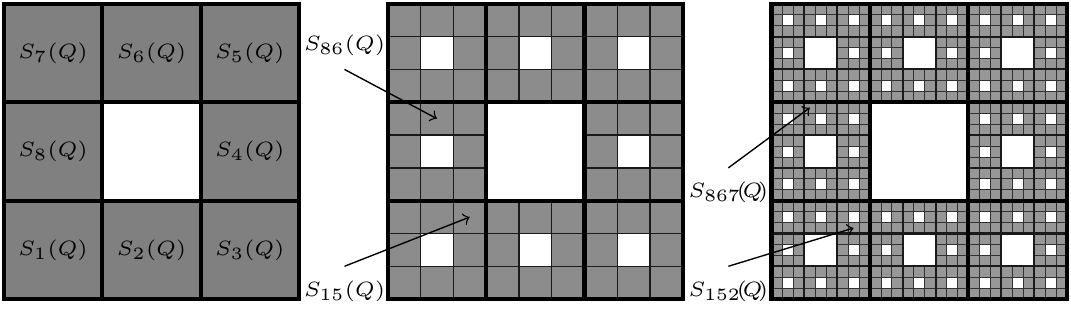}\\
  \caption{The third approximation of the Sierpi\'nski carpet}\label{z91}
\end{figure}
In both of the previous examples the cylinders were not disjoint but their interior were disjoint. This results that the cylinders are well separated.
\begin{definition}[ SSP,OSC,SOSC]\label{z42}
Here we define three important separation conditions. These will be used in much more general setup then the self-similar IFS.
  \begin{description}
    \item[(a)] If $S_i(\Lambda)\cap S_j(\Lambda)= \emptyset $ for all $i\ne j$ the we say that the Strong Separation Property (SSP) holds. (Like in the case of the Four Corner Cantor set.)
    \item[(b)] If there exists a bounded  open set $V$ such that
    \begin{enumerate}
      \item $S_i(V) \subset V$ for all $i=1, \dots ,m$
      \item $S_i(V)\cap S_j(V)= \emptyset $ for all $i\ne j$
    then we say that the Open Set Condition (OSC) holds like in the case of
    the Sierpi\'nski gasket and Sierpi\'nski carpet. Here $V$ is the interior of the right triangle and the unit square respectively.
    \item[(c)] If the OSC holds with an open set $V$ satisfying $V\cap\Lambda\ne \emptyset $, where $\Lambda$
 is the attractor, then we say that
 the Strong Open Set Condition (SOSC) holds.
     \end{enumerate}
  \end{description}
  The OSC and SOSC are equivalent for self-similar (and also for self-conformal) IFS.
\end{definition}

  Now we present a heuristic argument
in order to guess the Hausdorff dimension of the attractor  $\Lambda$
in the case when
  the cylinders are disjoint (that is when SSP holds):

  We will use the following fact: it is immediate from the definition that  for any $r>0$ we have:
  \begin{equation}\label{O24}
    \structure{\mathcal{H}^s(r \cdot A):=r^s \cdot \mathcal{H}^s(A)}.
  \end{equation}

  Since this is only a heuristic argument we assume that for the appropriate $s$,
  (that is for the $s$ satisfying $s=\dim_{\rm H} \Lambda$)
   the $s$-dimensional Hausdorff measure of the attractor $\Lambda$ has positive and finite. Then
  \begin{eqnarray*}
    \structure{\mathcal{H}^s(\Lambda)} &=&\sum\limits_{i=1}^{m}  \mathcal{H}^s(S_i\Lambda) \\
     &=& \sum\limits_{i=1}^{m} r_{i}^{s} \structure{\mathcal{H}^s(\Lambda)}.
  \end{eqnarray*}
  By the assumption above, we can divide by $\mathcal{H}^s(\Lambda)$. This yields that:
  \begin{equation}\label{O72}
    \ybox{\sum\limits_{i=1}^{m}r_{i}^{\alert{s}}=1.}
  \end{equation}

Even if $\mathcal{S}$ does not satisfy any of the previous assumptions we can define $s$ as the solution of \eqref{O72}.
  \begin{definition}
  Let $\mathcal{S}$ be a self-similar IFS of the form \eqref{O74}.
  The similarity dimension  $\dim_{\rm S}(\Lambda):=s $  where $s$ is the
 unique solution of \eqref{O72}. That is $\ybox{\sum\limits_{i=1}^{m}r_{i}^{\alert{s}}=1}.$ Sometimes we also say that $s$ is the similarity dimension of the attractor.
  \end{definition}
  Clearly,
  \begin{equation}\label{O71}
   \ybox{ \dim_{\rm H} (\Lambda) \leq \dim_{\rm S}(\Lambda)}.
  \end{equation}
  However "$=$" does not always hold:

Let $\Lambda_{1/3}$ be the attractor the $\mathcal{S}^{1/3}$ from \eqref{O33}: $$\ybox{\mathcal{S}^{1/3}=\mathcal{S}:=\left\{\frac{1}{3}x,\frac{1}{3}x+1,\frac{1}{3}x+3\right\}}.$$
 Then
  \begin{equation}\label{O70}
    \dim_{\rm B} (\Lambda_{1/3}) <0.9<1= \dim_{\rm S}(\Lambda_{1/3}).
  \end{equation}
This is so because in this case
$$
S^{1/3}_0\circ S^{1/3}_3\equiv S^{1/3}_1\circ S^{1/3}_0
$$
so there is an \yboxduma{exact overlap}.

\begin{theorem}[Hutchinson's-Moran Theorem \cite{moran_old} and \cite{Hutchinson}]
Let $\mathcal{S}:=\left\{S_1, \dots ,S_m\right\}$ be a self-similar IFS on $\mathbb{R}^d$ with contraction ratios $r_1, \dots ,r_m$ and similarity dimension $s$. We assume that the OSC (Open Set Condition) holds.

then
\begin{description}
  \item[(a)] $\ybox{\dim_{\rm H} \Lambda=s}$, even we have
  \item[(b)] $0<\mathcal{H}^s(\Lambda)<\infty $,
  \item[(c)] $\mathcal{H}^s\left(S_i(\Lambda)\cap S_j(\Lambda)\right)=0$ for all $i\ne j$.
\end{description}
\end{theorem}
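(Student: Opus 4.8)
The plan is to establish the two inequalities $\dim_{\rm H}\Lambda\le s$ and $\dim_{\rm H}\Lambda\ge s$ together with the quantitative bounds in (b), and then to deduce (c) as a formal consequence. For the upper bound I would use the obvious covers by cylinders: fixing a large ball $B$ with $S_i(B)\subset B$ for all $i$, the family $\{S_{i_1\dots i_n}(B)\}_{|\mathbf{i}|=n}$ covers $\Lambda$ for every $n$, and with the shorthand $r_{i_1\dots i_n}:=r_{i_1}\cdots r_{i_n}$ one computes $\sum_{|\mathbf{i}|=n}|S_{i_1\dots i_n}(B)|^s=|B|^s\sum_{|\mathbf{i}|=n}r_{\mathbf{i}}^s=|B|^s\big(\sum_{i=1}^m r_i^s\big)^n=|B|^s$. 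Since $\max_i r_i<1$, the diameters of these sets tend to $0$, so letting $\delta\to 0$ in \eqref{z97} gives $\mathcal{H}^s(\Lambda)\le|B|^s<\infty$; in particular $\dim_{\rm H}\Lambda\le s$.

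The heart of the matter is the lower bound $\mathcal{H}^s(\Lambda)>0$, and I would obtain it via the mass distribution principle applied to the natural self-similar measure. Put $p_i:=r_i^s$, so $\sum_i p_i=1$; let $\mathbb{P}$ be the corresponding Bernoulli measure on $\Sigma=\{1,\dots,m\}^{\mathbb N}$ and set $\mu:=\Pi_\ast\mathbb{P}$, a probability measure supported on $\Lambda$ with $\mathbb{P}([\mathbf{i}])=r_{\mathbf{i}}^s$ on every cylinder $[\mathbf{i}]$ of length $n$. It then suffices to produce a constant $C>0$ with $\mu(B(x,r))\le C r^s$ for all $x\in\mathbb{R}^d$ and all small $r>0$, since this yields $\mathcal{H}^s(\Lambda)\ge\mu(\Lambda)/C=1/C>0$. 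To prove the estimate I would bring in the open set $V$ from the OSC; because $\bigcup_i S_i(\overline V)\subseteq\overline V$ the attractor satisfies $\Lambda\subseteq\overline V$, and I fix balls of radii $\rho_0,\rho_1$ with $B(z_0,\rho_0)\subset V\subset B(z_1,\rho_1)$. For each $r$ let $\mathcal{A}_r$ be the finite antichain of words $\mathbf{i}=i_1\dots i_k$ that are minimal subject to $r_{\mathbf{i}}\le r/\rho_1$; every infinite word has exactly one prefix in $\mathcal{A}_r$, and $(\min_j r_j)\,r/\rho_1<r_{\mathbf{i}}\le r/\rho_1$ for $\mathbf{i}\in\mathcal{A}_r$. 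By the OSC the open sets $\{S_{\mathbf{i}}(V)\}_{\mathbf{i}\in\mathcal{A}_r}$ are pairwise disjoint, each one contains a ball of radius $\rho_0 r_{\mathbf{i}}$ and has diameter at most $2\rho_1 r_{\mathbf{i}}\le 2r$. A volume–packing argument now bounds the number $Q$ of those $\mathbf{i}\in\mathcal{A}_r$ with $S_{\mathbf{i}}(\overline V)\cap B(x,r)\ne\emptyset$: all such $S_{\mathbf{i}}(V)$ lie in $B(x,3r)$, they are disjoint, and each contains a ball of radius $\ge\rho_0(\min_j r_j)\,r/\rho_1$, so comparing $d$-dimensional volumes gives $Q\le\big(3\rho_1/(\rho_0\min_j r_j)\big)^d$, a constant independent of $x$ and $r$. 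Since $\Pi^{-1}(B(x,r))$ is covered by the cylinders $[\mathbf{i}]$ with $\mathbf{i}\in\mathcal{A}_r$ and $S_{\mathbf{i}}(\overline V)\supseteq S_{\mathbf{i}}(\Lambda)=\Pi([\mathbf{i}])$ meeting $B(x,r)$, we get $\mu(B(x,r))\le Q\,(r/\rho_1)^s$, i.e.\ $C:=Q\rho_1^{-s}$ works.

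Combining the two bounds gives $0<\mathcal{H}^s(\Lambda)<\infty$ and hence $\dim_{\rm H}\Lambda=s$, which are (b) and (a). For (c): $\mathcal{H}^s$ is a metric outer measure, so it restricts to a genuine (here finite) Borel measure on the compact sets $S_i(\Lambda)$. From $\Lambda=\bigcup_{i=1}^m S_i(\Lambda)$ and the scaling identity $\mathcal{H}^s(S_i(\Lambda))=r_i^s\,\mathcal{H}^s(\Lambda)$ (a consequence of \eqref{O24}) we obtain $\mathcal{H}^s(\Lambda)\le\sum_{i=1}^m\mathcal{H}^s(S_i(\Lambda))=\big(\sum_{i=1}^m r_i^s\big)\mathcal{H}^s(\Lambda)=\mathcal{H}^s(\Lambda)$, so subadditivity holds with equality; since $\mathcal{H}^s(\Lambda)<\infty$, inclusion–exclusion forces $\mathcal{H}^s(S_i(\Lambda)\cap S_j(\Lambda))=0$ for all $i\ne j$.

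The step I expect to be the main obstacle is the lower bound: concretely, the packing estimate that produces the uniform constant $Q$, together with the bookkeeping that legitimately replaces the cylinder pieces $S_{\mathbf{i}}(\Lambda)$ by the pairwise disjoint open sets $S_{\mathbf{i}}(V)$ (the closure/boundary issue, and the use of an antichain of roughly constant contraction ratio rather than words of fixed length). This is precisely the point where the OSC is indispensable, and everything else is either the elementary covering computation or soft measure theory.
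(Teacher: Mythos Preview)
Your argument is correct and is precisely the classical Moran--Hutchinson proof: cylinder covers for $\mathcal{H}^s(\Lambda)<\infty$, the mass distribution principle applied to the self-similar measure with weights $p_i=r_i^s$ together with the OSC volume/packing count for $\mathcal{H}^s(\Lambda)>0$, and the scaling identity plus finite subadditivity for (c). The paper itself does not supply a proof of this theorem---it only states it, citing Moran and Hutchinson (the heuristic computation preceding the statement merely motivates the similarity dimension formula under the a priori assumption $0<\mathcal{H}^s(\Lambda)<\infty$)---so there is nothing to compare beyond noting that your write-up is exactly the standard argument from those references.
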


\begin{theorem}[Falconer]\label{O69}
    The Hausdorff- and box-dimensions are the same for any self-similar set.
  \end{theorem}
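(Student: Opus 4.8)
The plan is to exploit the one structural feature a self-similar set has over an arbitrary compact set, namely \emph{quasi-self-similarity} --- near every one of its points, and at every scale, $\Lambda$ contains a similarity copy of the whole of itself of comparable diameter --- and then run Falconer's ``implicit method'' to push the box dimension down to the Hausdorff dimension. First the reduction: for any bounded set $\dim_{\rm H}\Lambda\le\underline{\dim}_{\rm B}\Lambda\le\overline{\dim}_{\rm B}\Lambda$, so it is enough to prove $\overline{\dim}_{\rm B}\Lambda\le\dim_{\rm H}\Lambda$; and once this holds all three dimensions coincide, so in particular the box dimension exists. Fix any $t>\dim_{\rm H}\Lambda$; we show $\overline{\dim}_{\rm B}\Lambda\le t$ and then let $t\downarrow\dim_{\rm H}\Lambda$. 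Since $t>\dim_{\rm H}\Lambda$ we have $\mathcal H^t(\Lambda)=0$, so by compactness, for every $\eta>0$ there is a \emph{finite} cover of $\Lambda$ which, after enclosing each of its members in a ball of twice the diameter, we may take to consist of balls $B_j=B(x_j,\rho_j)$, $j=1,\dots,k$, with $x_j\in\Lambda$, each $\rho_j$ smaller than any prescribed bound, and $\sum_{j=1}^{k}\rho_j^t<\eta$.

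Second, quasi-self-similarity. Write $r_{\mathbf w}:=r_{w_1}\cdots r_{w_n}$ for the contraction ratio of $S_{\mathbf w}:=S_{w_1\dots w_n}$ and $r_{\min}:=\min_i r_i$. Given $x\in\Lambda$ and $0<r<|\Lambda|$, choose $\mathbf i\in\Sigma$ with $\Pi(\mathbf i)=x$ and let $n$ be minimal with $r_{i_1\dots i_n}|\Lambda|\le r$; then the cylinder $S_{i_1\dots i_n}(\Lambda)$ contains $x$, has diameter $\le r$ --- hence lies inside $B(x,r)$ --- and by minimality of $n$ has diameter $>r_{\min}r$. So every ball $B(x,r)$ centred on $\Lambda$ with $r<|\Lambda|$ contains a cylinder $S_{\mathbf w}(\Lambda)\subset\Lambda\cap B(x,r)$ with $r_{\mathbf w}>(r_{\min}/|\Lambda|)\,r$: a genuine affine copy of $\Lambda$ of diameter $\asymp r$ sits inside $\Lambda$ near $x$.

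Third, the recursion. Put $\varphi(\delta):=N_\delta(\Lambda)$. From $\Lambda=\bigcup_i S_i(\Lambda)$ and $N_\delta(S_i\Lambda)=N_{\delta/r_i}(\Lambda)$ we get $\varphi(\delta)\le\sum_i N_{\delta/r_i}(\Lambda)$, and iterating this along any \emph{cut set} $\Gamma$ (a finite set of words such that every $\mathbf i\in\Sigma$ has exactly one prefix in $\Gamma$, so that the cylinders $\{S_{\mathbf w}(\Lambda):\mathbf w\in\Gamma\}$ still cover $\Lambda$) yields
\begin{equation*}
\varphi(\delta)\ \le\ \sum_{\mathbf w\in\Gamma} N_{\delta/r_{\mathbf w}}(\Lambda).
\end{equation*}
The point is to build, out of the efficient cover $\{B_j\}$ of the first paragraph, a cut set $\Gamma$ whose cylinders refine $\{B_j\}$ finely enough that $K:=\sum_{\mathbf w\in\Gamma}r_{\mathbf w}^t<1$. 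Granting this, set $h(\delta):=\delta^t\varphi(\delta)$ and $r_*:=\min_{\mathbf w\in\Gamma}r_{\mathbf w}\in(0,1)$; multiplying the displayed inequality by $\delta^t$ gives $h(\delta)\le K\sup_{\delta<\delta'\le\delta/r_*}h(\delta')$ for all small $\delta$. Since $\varphi$ is non-increasing, $h$ is finite and bounded on every range $[\epsilon,\delta_1]$; feeding this back into the inequality and using $K<1$ bounds $h$ on $[\epsilon,\delta_1]$ by $M_0:=\sup_{\delta\in(r_*\delta_1,\delta_1]}h(\delta)$ uniformly in $\epsilon$, hence $h(\delta)\le M_0$ for all $\delta\le\delta_1$. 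Thus $N_\delta(\Lambda)\le M_0\,\delta^{-t}$ for all small $\delta$, i.e. $\overline{\dim}_{\rm B}\Lambda\le t$, and letting $t\downarrow\dim_{\rm H}\Lambda$ finishes the argument.

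The hard part is producing $\Gamma$ with $K<1$. One would like $\sum_{\mathbf w\in\Gamma}r_{\mathbf w}^t$ to be comparable to $\sum_j\rho_j^t<\eta$, but this is not automatic: a point of $\Lambda$ lying in $B_j$ need not lie in any cylinder \emph{contained} in $B_j$, because neighbouring cylinders protrude; and --- the genuine difficulty --- because of overlaps the number of cylinders of diameter $\asymp\rho_j$ meeting $B_j$ is not bounded. (Exact overlaps make cylinders pile up on top of each other; this is precisely the mechanism behind $\dim_{\rm H}<\dim_{\rm S}$ for $\{\,x/3,\ x/3+1,\ x/3+3\,\}$.) So cylinders get counted in $\Gamma$ with large multiplicity and the naive bound on $K$ fails. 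Resolving this is exactly Falconer's implicit-method bookkeeping: arrange $\{B_j\}$ to have bounded overlap (Besicovitch/Vitali), cover each $\Lambda\cap B_j$ by cylinders of diameter $\asymp\rho_j$ contained in the concentric ball of twice the radius, and --- to absorb the clustered regions inside a single $B_j$ --- recurse, using that $\mathcal H^t$ vanishes on every piece $\Lambda\cap B_j$ as well; the bounded overlap \emph{scale by scale} together with $\sum_j\rho_j^t<\eta$ then forces $K$ below $1$ once $\eta$ is chosen small. Everything outside this combinatorial estimate is the soft reduction above.
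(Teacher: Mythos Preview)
The paper does not prove this theorem---it only states it and attributes it to Falconer---so your attempt must stand on its own. Unfortunately it has a genuine gap that cannot be repaired within the framework you set up.

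Your whole scheme rests on producing a \emph{cut set} $\Gamma$ (a finite antichain in $\Sigma^*$ meeting every infinite word exactly once) with $K=\sum_{\mathbf w\in\Gamma}r_{\mathbf w}^t<1$ for some $t>\dim_{\rm H}\Lambda$. But this is impossible whenever $t$ lies strictly below the similarity dimension $s$. Indeed, any cut set is obtained from $\{1,\dots,m\}$ by repeatedly replacing a word $\mathbf v$ by its children $\mathbf v1,\dots,\mathbf vm$; each such step replaces $r_{\mathbf v}^t$ by $r_{\mathbf v}^t\sum_i r_i^t$, and $\sum_i r_i^t>1$ for $t<s$. Hence $\sum_{\mathbf w\in\Gamma}r_{\mathbf w}^t\ge\sum_i r_i^t>1$ for \emph{every} cut set when $t<s$. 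So in exactly the interesting situation---a self-similar set with dimension drop $\dim_{\rm H}\Lambda<s$, such as the $\{x/3,\ x/3+1,\ x/3+3\}$ example you yourself cite---there is no cut set with $K<1$ for any $t\in(\dim_{\rm H}\Lambda,s)$, and your recursion $h(\delta)\le K\sup h(\delta')$ gives nothing. The ``implicit-method bookkeeping'' you invoke in the last paragraph cannot fix this: Besicovitch-type bounded overlap, or recursing on $\Lambda\cap B_j$, still produces words in $\Sigma^*$, and any antichain of them covering $\Lambda$ faces the same algebraic obstruction.

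Falconer's actual argument runs in the opposite direction. Your quasi-self-similarity observation (every ball $B(x,r)$ with $x\in\Lambda$ contains some $S_{\mathbf w}(\Lambda)$ with $r_{\mathbf w}\ge cr$) is the right starting point, but one uses it to push $\dim_{\rm H}\Lambda$ \emph{up} to $\overline{\dim}_{\rm B}\Lambda$, not to push $\overline{\dim}_{\rm B}\Lambda$ down. Concretely: if $B_1,\dots,B_N$ are disjoint $r$-balls centred in $\Lambda$ and $C_1,\dots,C_M$ are disjoint $r'$-balls centred in $\Lambda$, then the similarity copies $S_{\mathbf w_i}(C_j)$ give $NM$ disjoint balls of radius $\asymp rr'$ centred in $\Lambda$. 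This supermultiplicativity of packing numbers forces $\underline{\dim}_{\rm B}\Lambda=\overline{\dim}_{\rm B}\Lambda=:d$ and gives $P_r(\Lambda)\ge c\,r^{-d}$ for all small $r$, from which one builds a Frostman measure on $\Lambda$ witnessing $\dim_{\rm H}\Lambda\ge d$. No cut set and no efficient Hausdorff cover are needed; the overlaps are handled implicitly by working with packings on the attractor itself rather than with symbolic cylinders.
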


 The following problem is one of the most interesting open problems in Fractal Geometry:

\begin{conjecture}[Complete Overlap Conjecture]\label{y98}
  Let $s$ be the similarity dimension and let $\Lambda$ be the attractor
of a self-similar IFS $\mathcal{S}=\left\{S_i\right\}_{i=1}^{m}$ on $\mathbb{R}$. Then
\begin{equation}\label{O27}
  \ybox{\dim_{\rm H} (\Lambda)<\min\left\{d, s\right\}}
  \Longleftrightarrow
  \exists \mathbf{i},\ \mathbf{j}\in\Sigma^*, \mathbf{i}\ne\mathbf{j} \mbox{ s.t. }\ybox{S_{\mathbf{i}}\equiv S_{\mathbf{j}}}.
\end{equation}
\end{conjecture}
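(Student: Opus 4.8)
Since this is a famous open problem, what follows is a roadmap rather than a complete proof: I carry out the easy implication in full and indicate precisely where the hard one breaks down.

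\emph{The implication $(\Leftarrow)$.} Suppose $S_{\mathbf i}\equiv S_{\mathbf j}$ for some $\mathbf i\neq\mathbf j$ in $\Sigma^*$. First I would arrange that $|\mathbf i|=|\mathbf j|=:n$: if the lengths differ, replace the pair by the concatenations $(\mathbf i\mathbf j,\mathbf j\mathbf i)$, which still satisfy $S_{\mathbf i\mathbf j}=S_{\mathbf j\mathbf i}$ and now have equal length; these two words are distinct, since otherwise $\mathbf i,\mathbf j$ would be powers of a common word, and $S_{\mathbf i}=S_{\mathbf j}$ is then impossible for words of unequal length as the $S_k$ are contractions. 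Now the iterated IFS $\mathcal S^{(n)}:=\{S_{\mathbf w}:\mathbf w\in\{1,\dots,m\}^n\}$ has the same attractor $\Lambda$ and, by multiplicativity of $t\mapsto\sum_i r_i^t$, the same similarity dimension $s$. Because $S_{\mathbf i}=S_{\mathbf j}$, deleting the map $S_{\mathbf j}$ from $\mathcal S^{(n)}$ leaves $\Lambda$ invariant, hence unchanged, while dropping the positive term $r_{\mathbf j}^t$ from \eqref{O72}; the reduced system therefore has similarity dimension $s'<s$. Applying \eqref{O71} to the reduced system gives $\dim_{\rm H}\Lambda\le s'<s$, which settles $(\Leftarrow)$ whenever $s\le d$ -- in particular this is the relevant regime on $\mathbb R$.

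\emph{The implication $(\Rightarrow)$.} This is the deep and still open part, and I would attack it along the lines of Hochman's entropy method. Assume $\dim_{\rm H}\Lambda<\min\{d,s\}$ and, towards a contradiction, that no exact overlap exists. Fix the probability vector $p_i:=r_i^s$ and let $\mu$ be the corresponding self-similar measure $\mu=\sum_i p_i\,(S_i)_*\mu$. For this choice the entropy $h=-\sum_i p_i\log p_i$ and the Lyapunov exponent $\chi=-\sum_i p_i\log r_i$ satisfy $h/\chi=s$, while $\mu$ is supported on $\Lambda$, so the dimension of $\mu$ is at most $\dim_{\rm H}\Lambda<\min\{d,s\}=\min\{d,h/\chi\}$. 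Introduce the separation statistic
\[
\Delta_n:=\min\bigl\{\,\rho(S_{\mathbf i},S_{\mathbf j})\ :\ \mathbf i\neq\mathbf j\in\{1,\dots,m\}^n\,\bigr\},
\]
where $\rho$ is the natural metric on the affine similarities of $\mathbb R$ (say $\rho(S,T)=|r_S-r_T|+|t_S-t_T|$ for $S(x)=r_Sx+t_S$), so that $\Delta_n=0$ exactly when some generation-$n$ cylinder map coincides with another. The core analytic input is Hochman's theorem: \emph{if $\dim\mu<\min\{1,h/\chi\}$ then $-\tfrac1n\log\Delta_n\to\infty$}, i.e. the generation-$n$ maps cluster super-exponentially fast. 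I would obtain this by studying the entropy of $\mu$ at the dyadic scales $2^{-n}$: a dimension deficit forces, at a positive proportion of scales, the failure of the expected one-step entropy growth; Hochman's inverse theorem for the entropy of convolutions then forces $\mu$ at those scales to resemble a convolution of a near-atomic measure with a spread-out one, and unwinding this simultaneously over all scales shows that the maps $\{S_{\mathbf w}:\mathbf w\in\{1,\dots,m\}^n\}$ concentrate into very few $\rho$-clusters -- which is precisely the super-exponential decay of $\Delta_n$.

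\emph{From concentration to exact coincidence, and the main obstacle.} Under our standing assumption $\Delta_n>0$ for every $n$. If the data $(r_i,t_i)$ are all algebraic, this already yields a contradiction: the nonzero algebraic number realizing $\Delta_n$ is built from the $r_i,t_i$ by polynomial operations of degree and logarithmic height $O(n)$, hence is bounded below by $c^{n}$ for a fixed $c>0$ by a Liouville--Mahler height inequality, contradicting $-\tfrac1n\log\Delta_n\to\infty$; so $\Delta_n=0$ for some $n$, i.e. $S_{\mathbf i}=S_{\mathbf j}$ for some $\mathbf i\neq\mathbf j\in\{1,\dots,m\}^n$, an exact overlap, contrary to assumption. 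The hard part -- and the reason the conjecture remains open -- is that for \emph{transcendental} parameters there is no substitute for this last step: super-exponential clustering of cylinders does not obviously entail that two of them literally coincide. What is presently known is only partial: Hochman shows the exceptional parameters form a set of Hausdorff dimension zero, and for the model case of Bernoulli convolutions $\nu_\lambda$ much sharper conclusions have been extracted (Breuillard--Varj\'u, Varj\'u, Rapaport and others) using additive-combinatorial and transcendence-theoretic inputs; but ruling out, for a general transcendental self-similar IFS on $\mathbb R$, the possibility of ``near-overlaps of every finite order but no exact overlap'' is the principal obstacle and the crux of the problem.
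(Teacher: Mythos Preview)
The statement is a \emph{conjecture}, and the paper offers no proof of it; it merely states it, observes (via Example~\ref{z84}) that the analogous equivalence fails in $\mathbb{R}^2$, and in Remark~\ref{y99} explains how Hochman's theorem constitutes partial progress on the forward implication. You correctly recognise that the problem is open and present a roadmap rather than a proof, and your outline of the Hochman approach to $(\Rightarrow)$---super-exponential decay of $\Delta_n$ under a dimension drop, the resolution in the algebraic-parameter case via height bounds, and the transcendental obstacle---matches the paper's own informal discussion essentially point for point.

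There is, however, a genuine slip in your treatment of the reverse implication. You establish $(\Leftarrow)$ only under the hypothesis $s\le d$ and then assert that ``in particular this is the relevant regime on $\mathbb R$''. That is not so: on the line $d=1$, but nothing forces $s\le 1$, and in the regime $s>1$ the implication $(\Leftarrow)$ is simply \emph{false}. For instance, take $S_1(x)=x/2$, $S_2(x)=x/2+1/4$, $S_3(x)=x/2+1/2$; then $S_1\circ S_3\equiv S_2\circ S_1$ is an exact overlap, the similarity dimension is $s=\log 3/\log 2>1$, yet the attractor is $[0,1]$ and $\dim_{\rm H}\Lambda=1=\min\{1,s\}$---no dimension drop. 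The substance of the conjecture, as the paper's surrounding discussion makes clear, lies entirely in the forward direction (dimension drop forces exact overlap); the biconditional as literally written is already false on $\mathbb R$. Your argument for $(\Leftarrow)$ when $s\le 1$ is correct, but you should not claim it covers the line in general.
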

In $\mathbb{R}^2$ the conjecture does not hold. The following example was introduced by M. Keane, M. Smorodinsky and B. Solomyak \cite{keane1995morphology} and played a very important role in the study of self-similar fractals with overlapping construction.
\begin{example}\label{z84}
  For every $\lambda \in (\frac{1}{4},\frac{2}{5})$ consider the following self-similar set:\pause \

\begin{equation*}
{\cb  \ybox{\widetilde{\Lambda} _\lambda} :=\left\{\alert{\sum\limits_{i=0}^{\infty }a_i\lambda^{i}}:%
a_i\in \left\{0,1,3\right\}
  \right\}.}
\end{equation*}

Then ${\cb\widetilde{\Lambda}_\lambda }$ is the attractor of the one-parameter ($\lambda $) family IFS:\pause \

\begin{equation}\label{O33}
  {\cb \mathcal{S}^\lambda:=\left\{\ybox{S_{i}^{\lambda }(x)}:=\lambda \cdot
x+i\right\}_{i=0,1,3}}
\end{equation}

To normalize it we write $\Lambda_\lambda:=\frac{1-\lambda}{3} \cdot \widetilde{\Lambda}_\lambda$. It was proved by Solomyak \cite{solomyak1998measure}
that for Lebesgue almost all $\lambda>\frac{1}{3}$ (that is when the similarity dimension is greater than one)  we have
\begin{equation}\label{z82}
  \dim_{\rm H} \Lambda_\lambda=1.
\end{equation}
Fix a $\lambda$ slightly greater than $1/3$ for which \eqref{z82} holds and consider the product set $C_\lambda:=\Lambda_\lambda\times [0,1]$ (see Figure \ref{z81}). Then for $\lambda\in\left(\frac{1}{3},\frac{1}{\sqrt{6}}\right)$we have
$$\dim_{\rm H} C_\lambda=1+\frac{\log 2}{-\log \lambda}
<
\min\left\{2,\frac{\log 6}{-\log \lambda}\right\}=\min\left\{2,\dim_{\rm Sim}(\mathcal{S}) \right\}.
$$
Since there are uncountably many $\lambda$ like this, and complete overlap can happen only for countably many $\lambda$, we get that dimension drop occur in higher dimension not only when we have complete overlaps.
\begin{figure}[H]
 \includegraphics[width=7cm]{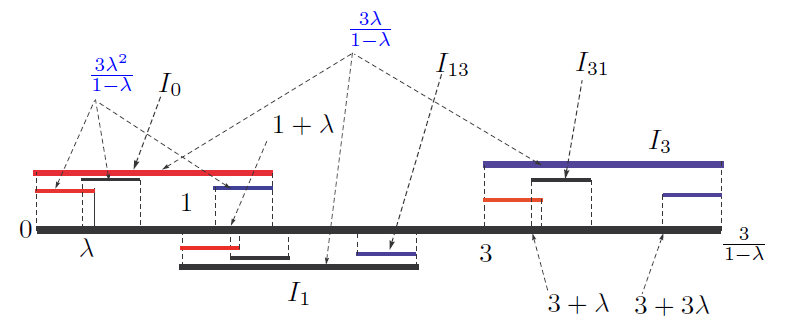}
  \includegraphics[height=6cm]{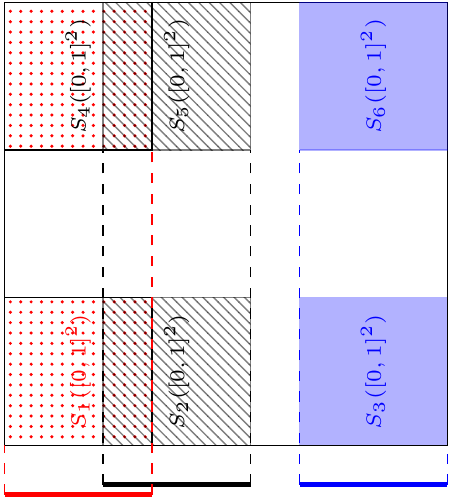}\\
  \caption{$\widetilde{\Lambda}_\lambda$ and $C_\lambda:=\Lambda_\lambda\times [0,1]$}\label{z81}
\end{figure}

\end{example}

\subsection{Self-similar measures}\label{z41}
Analogously to the self-similar sets, we can define the self-similar measures:
\begin{definition}\label{z09}
 Given an $m \geq 2$, $\ybox{\mathcal{S}=\left\{S_1, \dots ,S_m\right\}}$
self-similar IFS on $\mathbb{R}^d$ with contraction ratios: $\alert{r_1, \dots ,r_m}$ and we are given a probability
vector $\alert{\mathbf{p}=\left(p_1, \dots ,p_m\right)}$. Now we define the
self-similar measure $\nu=\nu_{\mathcal{S}, \mathbf{p}}$
which corresponds to $\mathcal{S}$ and $\mathbf{p}$:
\begin{equation}\label{z07}
  \ybox{\nu_{\mathcal{S}, \mathbf{p}}:=\Pi_*\left(\mathbf{p}^{\mathbb{N}}\right):=\mu\circ\Pi^{-1}}.
\end{equation}
\end{definition}
Then $\nu_{\mathcal{S}, \mathbf{p}}$ is the unique probability Borel  measure
satisfying
\begin{equation}\label{z08}
  \nu_{\mathcal{S}, \mathbf{p}}(H)=\sum\limits_{k=1}^{m}
p_i \cdot \nu_{\mathcal{S}, \mathbf{p}}\left(S_{i}^{-1}(H)\right),
\end{equation}
for every Borel set $H$.

    Let $\ybox{\nu:=\nu_{\mathcal{S},\mathbf{p}}}$ be the invariant measure for the self-similar IFS on $\mathbb{R}^d$:
 \begin{equation}\label{O65}
    \alert{\mathcal{S}:=\left\{S_i(x)=r_i \cdot O_i x+t_i\right\}_{i=1}^{m}}.
  \end{equation}
  Below we give a heuristic argument to show that \alert{if the OSC holds} then
   the Hausdorff dimension of $\nu$ is equal to  the similarity dimension of $\nu$, which is defined by:
  \vspace{-0.3cm}
 \begin{equation}\label{O64}
 \ybox{  \dim_{\rm {Sim}}\nu:=\frac{\sum\limits_{i=1}^{m}p_i \log p_i}{\sum\limits_{i=1}^{m} p_i\log r_i}=\frac{\mbox{entropy}}{\mbox{Lyapunov exponent}}.}
 \end{equation}

\begin{lemma}\label{z88}
  $\mathcal{S}$ and $\mathbf{p}$ as above and we assume that the OSC holds. Then
  \begin{equation}\label{z87}
    \dim_{\rm H} \nu=\dim_{\rm Sim} \nu.
  \end{equation}
\end{lemma}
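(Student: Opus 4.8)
The plan is to establish the sharper fact that $\nu$ is exact dimensional with local dimension $\dim_{\rm Sim}\nu$ at $\nu$-almost every point, and then read off $\dim_{\rm H}\nu$ from Lemma \ref{z86}. Write $h:=-\sum_{i=1}^{m}p_i\log p_i$ and $\chi:=-\sum_{i=1}^{m}p_i\log r_i>0$, so $\dim_{\rm Sim}\nu=h/\chi$. Recall $\nu=\Pi_*(\mathbf{p}^{\mathbb{N}})$ with $\Pi\colon\Sigma\to\Lambda$ the natural projection; for $\mathbf{j}=j_1\cdots j_n\in\Sigma^{*}$ set $r_{\mathbf{j}}:=r_{j_1}\cdots r_{j_n}$ and $p_{\mathbf{j}}:=p_{j_1}\cdots p_{j_n}$, so that $S_{\mathbf{j}}$ is a similarity of ratio $r_{\mathbf{j}}$, $\Pi[\mathbf{j}]=S_{\mathbf{j}}(\Lambda)$ has diameter $r_{\mathbf{j}}\operatorname{diam}\Lambda$, and $\mathbf{p}^{\mathbb{N}}([\mathbf{j}])=p_{\mathbf{j}}$. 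The target is: for $\nu$-a.e.\ $x$, $\ \lim_{r\to0}\log\nu(B(x,r))/\log r=h/\chi$.

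First I would apply the strong law of large numbers to the i.i.d.\ sequences $(\log p_{i_k})_k$ and $(\log r_{i_k})_k$ under $\mathbf{p}^{\mathbb{N}}$ (integrable, the alphabet being finite): for $\mathbf{p}^{\mathbb{N}}$-a.e.\ $\mathbf{i}=(i_1,i_2,\dots)$ one has $-\tfrac1n\log p_{\mathbf{i}|_n}\to h$ and $-\tfrac1n\log r_{\mathbf{i}|_n}\to\chi$. Fix such an $\mathbf{i}$ and put $x=\Pi\mathbf{i}$. The easy inequality $\overline{\dim}(\nu,x)\le h/\chi$ goes as follows: given small $r$, let $n=n(r)$ be least with $r_{\mathbf{i}|_n}\operatorname{diam}\Lambda\le r$; then $S_{\mathbf{i}|_n}(\Lambda)\subseteq B(x,r)$, so $\nu(B(x,r))\ge\mathbf{p}^{\mathbb{N}}([\mathbf{i}|_n])=p_{\mathbf{i}|_n}$, while minimality forces $\log r_{\mathbf{i}|_n}=\log r+O(1)$, whence $\log\nu(B(x,r))/\log r\le\log p_{\mathbf{i}|_n}/(\log r_{\mathbf{i}|_n}+O(1))\to h/\chi$. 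This uses no separation and already yields $\dim_{\rm H}\nu\le h/\chi$.

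The reverse bound $\underline{\dim}(\nu,x)\ge h/\chi$ for $\nu$-a.e.\ $x$ is the heart of the matter, and here separation is essential. Suppose first the SSP holds and set $\delta:=\min_{i\ne j}\operatorname{dist}(S_i(\Lambda),S_j(\Lambda))>0$. Comparing two length-$n$ cylinders through their longest common prefix gives $\operatorname{dist}\bigl(S_{\mathbf{i}|_n}(\Lambda),\,\Lambda\setminus S_{\mathbf{i}|_n}(\Lambda)\bigr)\ge\delta\,r_{\mathbf{i}|_{n-1}}$, so $B(x,\delta r_{\mathbf{i}|_{n-1}})\cap\Lambda\subseteq S_{\mathbf{i}|_n}(\Lambda)$; since the level-$n$ pieces are pairwise disjoint, $\nu(S_{\mathbf{i}|_n}\Lambda)=p_{\mathbf{i}|_n}$, so for $r\in(\delta r_{\mathbf{i}|_n},\delta r_{\mathbf{i}|_{n-1}}]$ one has $\nu(B(x,r))\le p_{\mathbf{i}|_n}$ with $\log r_{\mathbf{i}|_n}=\log r+O(1)$, and the SLLN yields $\log\nu(B(x,r))/\log r\ge\log p_{\mathbf{i}|_n}/(\log r+O(1))\to h/\chi$. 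Together with the easy half this proves the SSP case.

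Under the weaker OSC the pieces $S_{\mathbf{k}}(\Lambda)$ may touch, so a small ball around a typical point need not sit inside a single cylinder; this is the main obstacle, and I would remove it by replacing $\Lambda$ by the OSC open set $V$ (equivalently SOSC, by the remark after Definition \ref{z42}), for which $\Lambda\subseteq\bar V$, $S_i(\bar V)\subseteq\bar V$, and the $S_i(V)$ are disjoint. Two facts are needed: (i) since $V\cap\Lambda\ne\emptyset$ and cylinders shrink, some word $\mathbf{w}\in\Sigma^{*}$ satisfies $S_{\mathbf{w}}(\Lambda)\subseteq V$, whence $\rho_0:=\operatorname{dist}(S_{\mathbf{w}}(\Lambda),\,\mathbb{R}^d\setminus V)>0$; and (ii) $\Pi$ is injective off a $\mathbf{p}^{\mathbb{N}}$-null set — equivalently $\nu(\Lambda\cap\partial V)=0$ and $\nu(S_{\mathbf{j}}(\bar V))=p_{\mathbf{j}}$ for all $\mathbf{j}\in\Sigma^{*}$ — because any coincidence $\Pi\mathbf{i}=\Pi\mathbf{i}'$ with $\mathbf{i}\ne\mathbf{i}'$ forces $\sigma^{\ell}\mathbf{i}\in\Lambda\cap\partial V$ for some $\ell$, and $\Lambda\cap\partial V$ is $\nu$-null. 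Granting these: for $\nu$-a.e.\ $\mathbf{i}$, the Birkhoff theorem for $(\sigma,\mathbf{p}^{\mathbb{N}})$ gives that $\mathbf{w}$ occurs in $\mathbf{i}$ at positions $n_1<n_2<\cdots$ of density $p_{\mathbf{w}}>0$ (so $n_k\sim k/p_{\mathbf{w}}$, whence $n_{k+1}/n_k\to1$); at each $n_k$, $\sigma^{n_k}\mathbf{i}\in[\mathbf{w}]$ gives $x=\Pi\mathbf{i}\in S_{\mathbf{i}|_{n_k}}(S_{\mathbf{w}}\Lambda)$, hence (scaling (i) by $r_{\mathbf{i}|_{n_k}}$) $B(x,r_{\mathbf{i}|_{n_k}}\rho_0)\subseteq S_{\mathbf{i}|_{n_k}}(V)$ and, by (ii), $\nu(B(x,r_{\mathbf{i}|_{n_k}}\rho_0))\le\nu(S_{\mathbf{i}|_{n_k}}(\bar V))=p_{\mathbf{i}|_{n_k}}$. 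Choosing, for each small $r$, the index $k$ with $r_{\mathbf{i}|_{n_{k+1}}}\rho_0<r\le r_{\mathbf{i}|_{n_k}}\rho_0$ and using the SLLN together with $n_{k+1}/n_k\to1$,
\begin{equation*}
\frac{\log\nu(B(x,r))}{\log r}\ \ge\ \frac{\log p_{\mathbf{i}|_{n_k}}}{\log r_{\mathbf{i}|_{n_{k+1}}}+\log\rho_0}\ =\ \bigl(1+o(1)\bigr)\,\frac{h}{\chi}\cdot\frac{n_k}{n_{k+1}}\ \longrightarrow\ \frac{h}{\chi}.
\end{equation*}
Thus $\underline{\dim}(\nu,x)\ge h/\chi$ $\nu$-a.e.; with the easy half, $\nu$ is exact dimensional with local dimension $h/\chi$, and Lemma \ref{z86} gives $\dim_{\rm H}\nu=h/\chi=\dim_{\rm Sim}\nu$. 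The hardest point to nail down will be fact (ii), the $\nu$-a.e.\ injectivity of $\Pi$ under the OSC (equivalently, that $\nu$ ignores the overlaps of distinct cylinders); everything else is bookkeeping around the strong law. One could alternatively quote the general theorem that ergodic invariant measures of conformal expanding repellers are exact dimensional with dimension (entropy)$/$(Lyapunov exponent), applied to $(\sigma,\mathbf{p}^{\mathbb{N}})$ pushed through $\Pi$.
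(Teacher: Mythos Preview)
Your proposal is correct and follows the same core idea as the paper: compute the local dimension of $\nu$ at a typical point via the law of large numbers applied to $(\log p_{i_k})$ and $(\log r_{i_k})$, then invoke Lemma~\ref{z86}. The paper's argument is explicitly labeled a \emph{Heuristic Proof}: it simply replaces $B(x,r)$ by the cylinder $I_{i_1\dots i_n}$, writes $\nu(I_{i_1\dots i_n})=p_{i_1\dots i_n}$ and $|I_{i_1\dots i_n}|\asymp r_{i_1\dots i_n}$, and lets LLN do the rest, without justifying the ball--cylinder comparison or the equality $\nu(\text{cylinder})=p_{\text{cylinder}}$ under OSC.

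What you do differently is precisely to supply those missing justifications: the easy direction $\overline{\dim}(\nu,x)\le h/\chi$ via inclusion of a cylinder in a ball, and the harder direction via SOSC, a recurrence argument to the word $\mathbf{w}$ with $S_{\mathbf{w}}(\Lambda)\subset V$, and the density of return times. One small remark: your ``fact (ii)'' is less delicate than you fear. Since $S_i(V)$ and $S_j(V)$ are disjoint \emph{open} sets for $i\ne j$, one has $\overline{S_j(V)}\cap S_i(V)=\emptyset$, and iterating this gives $\Pi^{-1}(S_{\mathbf{j}}(V))\subseteq[\mathbf{j}]$ \emph{exactly}, not merely modulo a null set; hence $\nu(S_{\mathbf{j}}(V))\le p_{\mathbf{j}}$ for free, and the $\nu$-a.e.\ injectivity of $\Pi$ is not actually needed for your lower bound. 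With that observation your argument is a clean, fully rigorous version of the paper's heuristic.
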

\begin{proof}[Heuristic Proof]
Let $I$ be a large interval such that $S_i(I) \subset I$ for all $i=1, \dots ,m$ and we write $ I_{i_1 \dots i_n}:=S_{i_1 \dots i_n}I$ for  the level $n$ cylinder intervals.
  It follows from Birkhoff's Ergodic Theorem that in this case the limit in
  \eqref{O63} and \eqref{z85} exist. That is, Lemma \ref{z86} indicates that
     for a $\nu$-typical $x=\Pi(\mathbf{i})$, $\mathbf{i}\in\Sigma$:
  \begin{eqnarray*}
 \ybox{\dim_{\rm H} \nu }  &=&\lim\limits_{n\to\infty} \frac{\log \nu( I_{i_1 \dots i_n})}{\log|I_{i_1 \dots i_n}|}\stackrel{\mathrm{def}}{=}\lim\limits_{n\to\infty} \frac{\log p_{i_1 \dots i_n}}{\log r_{i_1 \dots i_n}} \\
    &=&\frac{\lim\limits_{n\to\infty} \frac{1}{n}\sum\limits_{k=1}^{n}\log p_{i_k}}
    {\lim\limits_{n\to\infty} \frac{1}{n}\sum\limits_{k=1}^{n}\log r_{i_k}}
    \stackrel{\mathrm{LLN}}{=}\ybox{\frac{\sum\limits_{i=1}^{m}p_i \log p_i}{\sum\limits_{i=1}^{m} p_i\log r_i}}=\dim_{\rm {Sim}}\nu,
  \end{eqnarray*}
   where LLN means Law of Large Numbers. Here we used the notations: $p_{i_1 \dots i_n}:=p_{i_1} \cdots p_{i_n}$ and
   $r_{i_1 \dots i_n}:=r_{i_1} \cdots r_{i_n}$
\end{proof}

\subsubsection{Hochman Theorem}
Let $\mathcal{S}=\left\{S_i\right\}_{i=1}^{m}$ be a self-similar IFS on $\mathbb{R}$ with contraction ratios $\left\{r_i\right\}_{i=1}^{m}$.
Let $\Delta_n(\mathcal{S})$ be
the smallest distance between the left end points of
two level $n$ cylinders having the same length.
More formally,  $\Delta_n(\mathcal{S})$ is
 the minimum of $\Delta(\pmb{\omega},\pmb{\tau})$ for distinct
 $\pmb{\omega},\pmb{\tau}\in\Sigma_n$, where
\[
\Delta(\pmb{\omega},\pmb{\tau})=\left\{\begin{array}{cc}
             \infty & S_{\pmb{\omega}}'(0)\neq S_{\pmb{\tau}}'(0) \\
             \left|S_{\pmb{\omega}}(0)-S_{\pmb{\tau}}(0)\right| & S_{\pmb{\omega}}'(0)=S_{\pmb{\tau}}'(0).
           \end{array}\right.
\]

\begin{condition}[HESC]\label{y96}
We say that the self-similar IFS $\mathcal{S}$ satisfies \underline{Hochman's} \underline{exponential} \underline{separation condition} (HESC) if there exists an $\varepsilon>0$ and an $n_k\uparrow \infty $ such that
\begin{equation}\label{k78}
  \Delta_{n_k}>\varepsilon^{n_k}.
\end{equation}
\end{condition}
Hochman proved the following very important assertion in \cite[Theorem~1.1]{hochman2012self}.
\begin{theorem}[Hochman]\label{k79}
	Assume that  $\mathcal{S}=\left\{S_i\right\}_{i=1}^{m}$ is a self-similar IFS on $\mathbb{R}$ which satisfies Hochman's exponential separation condition. Let $\mathbf{p}=(p_1, \dots ,p_N)$ be an arbitrary probability vector.
Then
\begin{equation}\label{k80}
  \dim_H\left(\nu_{\mathcal{S},\mathbf{p}}\right)=\min\left\{1,
\dim_{\rm Sim}\nu \right\},
\end{equation}
\end{theorem}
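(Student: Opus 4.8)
The plan is to deduce the theorem from Hochman's analysis in \cite{hochman2012self}, splitting it into a soft upper bound and a hard lower bound, the latter proved through the entropy of $\nu:=\nu_{\mathcal S,\mathbf p}$ at dyadic scales. The upper bound $\dim_H\nu\le\min\{1,\dim_{\rm Sim}\nu\}$ needs no separation: it is $\le 1$ because $\nu$ is carried by $\mathbb R$, and $\dim_H\nu\le\dim_{\rm Sim}\nu$ is the measure version of \eqref{O71} --- iterating \eqref{z08} gives $\nu(I_{i_1\dots i_n})\ge p_{i_1\dots i_n}$ for the level-$n$ cylinder intervals $I_{i_1\dots i_n}$ of length $r_{i_1\dots i_n}|I|$, so the computation in the Heuristic Proof of Lemma \ref{z88}, read as an \emph{upper} estimate, gives $\underline{\dim}(\nu,x)\le\dim_{\rm Sim}\nu$ for $\nu$-a.e.\ $x$ by the strong law of large numbers, and Lemma \ref{z86} turns this into the bound on $\dim_H\nu$. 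So the whole content is the lower bound.

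For the lower bound I would work with $H(\nu,\mathcal D_n)$, the Shannon entropy of $\nu$ with respect to the partition $\mathcal D_n$ of $\mathbb R$ into length-$2^{-n}$ intervals, using three ingredients. First, self-similar measures on $\mathbb R$ are exact dimensional (Feng--Hu), so $\dim_H\nu=\lim_n\tfrac1n H(\nu,\mathcal D_n)=:\alpha$; moreover, by the Shannon--McMillan/martingale argument, at a density-$1$ set of scales $k$ the ``level-$k$ components'' of $\nu$ --- normalized restrictions of $\nu$ to the $\mathcal D_k$-cell of a $\nu$-typical point, rescaled to unit length --- grow in entropy at rate $\approx\alpha$, so if $\alpha<1$ then at a positive density of scales these components stay bounded away from uniform. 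Second, iterating \eqref{z08} gives, for every $\ell$, $\nu=\sum_{|\mathbf i|=\ell}p_{\mathbf i}(S_{\mathbf i})_*\nu$, which in the homogeneous case $r_i\equiv r$ is the convolution $\nu=\nu^{(\ell)}*(r^\ell\nu)$ with $\nu^{(\ell)}:=\sum_{|\mathbf i|=\ell}p_{\mathbf i}\delta_{S_{\mathbf i}(0)}$ (and in general, after grouping the words $\mathbf i$ by their contraction ratio $r_{\mathbf i}$, a finite average of translated rescalings that plays the same role); iterating once more exhibits $\nu$ as an infinite self-similar convolution whose $\ell$-block factor is $\nu^{(\ell)}$. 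Third, the number $\Delta_\ell(\mathcal S)$ of Condition \ref{y96} is precisely the least gap between atoms of $\nu^{(\ell)}$ lying on a common rescaled copy, so HESC says exactly that, along $n_k\uparrow\infty$, $\nu^{(n_k)}$ has $m^{n_k}$ distinct $\varepsilon^{n_k}$-separated atoms, hence is ``thick'' --- very far from atomic --- down to a scale only exponentially finer than its natural scale.

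The crux is Hochman's \emph{inverse theorem for the entropy of convolutions on $\mathbb R$}: for every $\varepsilon>0$ there is $\delta>0$ such that, for $n$ large, if $\tfrac1n H(\theta*\rho,\mathcal D_n)\le\tfrac1n H(\rho,\mathcal D_n)+\delta$ then for all but an $\varepsilon$-fraction of scales $1\le k\le n$ either the level-$k$ components of $\rho$ are $\varepsilon$-close to uniform or those of $\theta$ are $\varepsilon$-close to atomic. Granting it, I would establish the HESC-free statement that $\alpha:=\dim_H\nu<\min\{1,\dim_{\rm Sim}\nu\}$ implies $\Delta_n(\mathcal S)<\varepsilon^n$ for all large $n$ and every $\varepsilon>0$; its contrapositive, combined with HESC, gives the lower bound. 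The mechanism: were $\Delta_n\ge\varepsilon^n$ for some large $n$, then $\nu^{(n)}$ is thick at many scales, $\alpha<1$ keeps the complementary factor non-uniform at many scales, and $\alpha<\dim_{\rm Sim}\nu=(\text{entropy})/(\text{Lyapunov exponent})$ forces $\nu^{(n)}$ to carry strictly more entropy per scale than the current dimension; plugging the self-similar convolution into the inverse theorem then produces an entropy increment incompatible with $\dim_H\nu=\alpha$. Together with the upper bound this gives $\dim_H\nu=\min\{1,\dim_{\rm Sim}\nu\}$ under HESC.

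The main obstacle is twofold. The inverse theorem itself is the technical heart of \cite{hochman2012self}: a multiscale, measure-theoretic analogue of the Freiman and Balog--Szemer\'edi--Gowers inverse sumset theorems, proved by analysing entropy growth under repeated convolution in the spirit of Bourgain's sum--product and projection estimates. Equally delicate is extracting the contradiction from it: one cannot simply apply the inverse theorem to a single convolution decomposition of $\nu$, because the ``excess entropy'' one can force to be small and the proportion of scales one can control are linked the wrong way; instead one uses that the \emph{same} self-similar decomposition is available at every scale and averages the local ``entropy must increase'' phenomenon over a long range of scales into the global contradiction with HESC. The non-homogeneous case $r_i\not\equiv r$ contributes only the routine bookkeeping of restricting to sub-families of words of comparable contraction ratio so that the decomposition really is a convolution.
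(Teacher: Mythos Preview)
The paper does not supply its own proof of this theorem: it simply attributes the result to Hochman and cites \cite[Theorem~1.1]{hochman2012self}, then moves on to Remark~\ref{y99}. There is therefore no in-paper argument to compare against.

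Your proposal is, in outline, a faithful sketch of Hochman's actual proof in \cite{hochman2012self}: the trivial upper bound, exact-dimensionality and the identification of $\dim_H\nu$ with the entropy growth rate at dyadic scales, the self-convolution structure of $\nu$, the role of $\Delta_n$ as a separation scale for the discrete factor $\nu^{(n)}$, and the inverse theorem for entropy under convolution as the engine that converts non-super-exponential separation into an entropy increment contradicting $\dim_H\nu<\min\{1,\dim_{\rm Sim}\nu\}$. You are also honest that the inverse theorem and the multi-scale averaging that extracts the contradiction are the genuine difficulties and are not reproved here. As a survey-level sketch this is appropriate; just be aware that a reader will treat your write-up as a pointer to \cite{hochman2012self} rather than a self-contained argument, which is exactly what the paper itself does.
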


\begin{remark}[Relation to the Compete Overlaps Conjecture]\label{y99}
Although Hochman's Theorem does not solve the Compete Overlaps Conjecture (Conjecture \ref{y98}) but it makes a very significant progress towards it.

\begin{itemize}
  \item Exact overlap means that $\Delta _n=0$ for some $n$.
\item
 If the OSC holds then $\alert{\Delta _n\to 0}$ exactly  exponentially fast.
  \item \alert{$\Delta _n\to 0$ at least exponentially fast} always holds. Namely: $\alert{\#\left\{|\mathbf{i}|=n\right\}=m^n}$. On the other hand: \structure{$\#\left\{r_{\mathbf{i}}:|\mathbf{i}|=n\right\}$ is polynomially large} ($r_{\mathbf{i}}$ was the contraction ration of $S_{\mathbf{i}}$).
So, there exist distinct \clrgreen{$\mathbf{i},\mathbf{j}$ of length $n$} with \clrgreen{$r_{\mathbf{i}}=r_{\mathbf{j}}$} and with
      \yboxduma{with exponentially small } $\alert{\left|S _{\mathbf{i}}(0)-S _{\mathbf{j}}(0)\right|}$.
\item
However, in case of a dimension drop, that is if we can find a probability vector $\mathbf{p}$ such that $\dim_{\rm H} \nu_{\mathcal{S},\mathbf{p}}<\min\left\{1,\dim_{\rm S} \nu\right\}$ then
$\Delta_n\to 0 $ super exponentially fast. That is
$$
\lim\limits_{n\to\infty} -\frac{1}{n}\log\Delta_n
=\infty .$$
\end{itemize}
\end{remark}
The following theorem shows that Hochman's theorem solves the  Complete Overlap Conjecture in some cases:
\begin{theorem}[Hochman]
 For an self-similar IFS on the line with algebraic parameters we have
 either exact overlaps, or no dimension drop:  $\dim_{\rm H} \Lambda =\min\left\{1,\dim_{\rm S}\Lambda \right\}$.
 \end{theorem}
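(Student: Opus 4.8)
The plan is to reduce the statement to Hochman's Theorem~\ref{k79} by means of a Diophantine estimate. Write $\mathcal{S}=\{S_i(x)=r_ix+t_i\}_{i=1}^m$; since all the $r_i,t_i$ are algebraic, after replacing $\mathbb{Q}$ by the field they generate we may assume $r_i,t_i\in K$ for a fixed number field $K$ with $d:=[K:\mathbb{Q}]<\infty$. First I would dispose of the easy inequality: $\dim_{\rm H}\Lambda\le\min\{1,\dim_{\rm S}\Lambda\}$ holds unconditionally, being \eqref{O71} combined with $\Lambda\subset\mathbb{R}$. So the whole content is the reverse inequality, and we may assume there is no exact overlap, which by Remark~\ref{y99} means $\Delta_n(\mathcal{S})>0$ for every $n$.

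The heart of the matter is to upgrade ``$\Delta_n(\mathcal{S})>0$ for all $n$'' to ``$\Delta_n(\mathcal{S})\ge\varepsilon^n$ for some fixed $\varepsilon>0$ and all $n$'', so that HESC (Condition~\ref{y96}) holds. Fix $n$ and distinct $\pmb\omega,\pmb\tau\in\Sigma_n$ with $S_{\pmb\omega}'(0)=S_{\pmb\tau}'(0)$, and put $\alpha:=S_{\pmb\omega}(0)-S_{\pmb\tau}(0)\in K$, which is nonzero by assumption. Expanding, $S_{\pmb\omega}(0)=\sum_{k=1}^{n}r_{\omega_1}\cdots r_{\omega_{k-1}}t_{\omega_k}$ is a sum of $n$ products, each of at most $n$ of the numbers $r_i,t_i$; hence if $N$ is a positive integer with all $Nr_i,Nt_i$ algebraic integers, then $\beta:=N^{n}\alpha$ is a nonzero algebraic integer of $K$. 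Its field norm $\mathrm{Nm}_{K/\mathbb{Q}}(\beta)$ is therefore a nonzero rational integer, so $\prod_{\sigma}|\sigma(\beta)|\ge 1$, where $\sigma$ ranges over the $d$ embeddings $K\hookrightarrow\mathbb{C}$. On the other hand the triangle inequality bounds $|\sigma(\beta)|=N^{n}|\sigma(S_{\pmb\omega}(0))-\sigma(S_{\pmb\tau}(0))|\le C^{n}$, where $C$ depends only on $N$ and on $\max_{i,\sigma}|\sigma(r_i)|$ and $\max_{i,\sigma}|\sigma(t_i)|$ (the polynomial factor $2n$ being absorbed into the exponential). Isolating the given real embedding $\sigma_0$ gives $|\alpha|=N^{-n}|\sigma_0(\beta)|\ge N^{-n}\prod_{\sigma\ne\sigma_0}|\sigma(\beta)|^{-1}\ge N^{-n}C^{-(d-1)n}$. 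Since this bound is uniform over all admissible pairs, $\Delta_n(\mathcal{S})\ge\varepsilon^{n}$ with $\varepsilon:=\bigl(NC^{d-1}\bigr)^{-1}>0$, so HESC holds.

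With HESC in hand I would conclude as follows. Let $s:=\dim_{\rm S}\Lambda$, so $\sum_i r_i^{s}=1$, and consider the Bernoulli measure $\nu:=\nu_{\mathcal{S},\mathbf{p}}$ with $p_i:=r_i^{s}$. Since $\sum_i q_i\log(r_i^{s}/q_i)\le\log\sum_i r_i^{s}=0$ for every probability vector $\mathbf{q}$, with equality exactly when $q_i=r_i^{s}$, the quantity \eqref{O64} is maximised at $\mathbf{p}$ with maximal value $s$; in particular $\dim_{\rm Sim}\nu=s$. By Hochman's Theorem~\ref{k79}, $\dim_{\rm H}\nu=\min\{1,\dim_{\rm Sim}\nu\}=\min\{1,s\}$. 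As $\nu$ is supported on $\Lambda$ this gives $\dim_{\rm H}\Lambda\ge\dim_{\rm H}\nu=\min\{1,s\}$, and together with the trivial upper bound we obtain $\dim_{\rm H}\Lambda=\min\{1,\dim_{\rm S}\Lambda\}$.

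The main obstacle is the Diophantine step of the second paragraph: it is the only place where algebraicity of the parameters is used, and where one must invoke number-theoretic input (the product formula, equivalently the lower bound $1$ for the absolute value of the norm of a nonzero algebraic integer). The point requiring care is that the common denominator $N$ and the constant $C$ depend only on the fixed finite data $\{r_i,t_i\}$ and not on $n$, and that the resulting estimate is uniform over the exponentially many pairs $(\pmb\omega,\pmb\tau)\in\Sigma_n\times\Sigma_n$; this is precisely what turns ``$\Delta_n\neq 0$'' into the genuinely exponential lower bound ``$\Delta_n\ge\varepsilon^n$'' demanded by HESC. Everything else is either a direct citation of Hochman or elementary bookkeeping.
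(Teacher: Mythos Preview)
The paper does not prove this theorem; it is a survey and the statement is quoted from Hochman \cite{hochman2012self} without proof. Your argument is correct and is essentially Hochman's original one: no exact overlaps together with algebraicity of the parameters yield, via the norm lower bound $|\mathrm{Nm}_{K/\mathbb{Q}}(\beta)|\ge 1$ for a nonzero algebraic integer $\beta$, the exponential separation $\Delta_n\ge\varepsilon^n$, i.e.\ HESC; Theorem~\ref{k79} applied to the natural weights $p_i=r_i^{s}$ then gives $\dim_{\rm H}\Lambda\ge\dim_{\rm H}\nu=\min\{1,s\}$, and the reverse inequality is \eqref{O71}. The only cosmetic points are that on the line one should allow $S_i(x)=\pm r_i x+t_i$ (absorb the sign into $r_i$; the Diophantine bound is unaffected), and that for non-identity embeddings $\sigma$ one may have $|\sigma(r_i)|>1$, so the bound $|\sigma(\beta)|\le C^n$ genuinely needs $C$ to depend on $\max_{i,\sigma}|\sigma(r_i)|$ as you wrote, not on the contraction ratios alone.
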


\section{Dimension of the self-conformal sets and measures when OSC holds}
We can extend a large part of the dimension theory of self-similar sets
to the so called self-conformal ones by using the notion of the  topological pressure.

\begin{definition}[Conformal IFS on the line]\label{z79}
 Let $\eta>0$ and $m>1$. We are given
$f_1, \dots ,f_m:[0,1]\to[0,1]$ satisfying the following conditions:

\begin{description}
\item[(a)] $f_i\in \mathcal{C}^{1+\eta}[0,1]$ for all $i=1, \dots ,m$,
  \item[(b)] $\exists\  0<c_1,c_2<1$ such that $c_1<|f'_i(x)|<c_2$ holds for all $i=1, \dots ,m$ and all $x\in [0,1]$.
\end{description}
Then we say that
\begin{equation}\label{z80}
  \mathcal{F}:=\left\{f_1, \dots ,f_m\right\}
\end{equation}
is a self-conformal  IFS. We can define the attractor, the symbolic space and the natural projection analogously as we did in \eqref{O26}, \eqref{O23} and \eqref{O22} respectively.
\end{definition}

A very important property of the self-conformal IFS the following:

\begin{theorem}[Bounded Distortion Property]
  Let $\mathcal{F}$ be as in Definition \ref{z79}. Then there exist
  $0<c_3<c_4$ such that for all $n$ and for all $(i_1, \dots ,i_n)\in(1, \dots ,m)^n$ and for all $x,y\in[0,1]$ we have
  \begin{equation}\label{z78}
    c_3<\frac{f'_{i_1, \dots ,i_n}(x)}{f'_{i_1, \dots ,i_n}(y)}<c_4,
  \end{equation}
\end{theorem}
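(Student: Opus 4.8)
The plan is to establish the Bounded Distortion Property by the standard chain-rule-plus-telescoping argument, exploiting the $\mathcal{C}^{1+\eta}$ regularity of the maps $f_i$ and the uniform contraction estimate from condition (b) of Definition \ref{z79}. First I would fix notation: write $f_{i_1\dots i_n}=f_{i_1}\circ\cdots\circ f_{i_n}$, and for $x\in[0,1]$ set $x_k=f_{i_{k+1}\dots i_n}(x)$ for $k=0,\dots,n$ (so $x_n=x$ and $x_0=f_{i_1\dots i_n}(x)$), with the analogous points $y_k$ for $y$. By the chain rule, $f'_{i_1\dots i_n}(x)=\prod_{k=1}^n f'_{i_k}(x_{k-1})$, so that
\begin{equation}\label{z77x}
\log\frac{f'_{i_1\dots i_n}(x)}{f'_{i_1\dots i_n}(y)}=\sum_{k=1}^n\left(\log|f'_{i_k}(x_{k-1})|-\log|f'_{i_k}(y_{k-1})|\right).
\end{equation}

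Next I would bound each summand. Since $f_{i_k}\in\mathcal{C}^{1+\eta}$, the derivative $f'_{i_k}$ is $\eta$-Hölder; combining this with the lower bound $|f'_{i_k}|>c_1$ on $[0,1]$ gives a constant $C=C(c_1,\eta,\mathcal{F})$ such that $\bigl|\log|f'_{i_k}(u)|-\log|f'_{i_k}(v)|\bigr|\le C|u-v|^\eta$ for all $u,v\in[0,1]$ and all $i_k$. Applying this to \eqref{z77x} yields
\begin{equation}\label{z76x}
\left|\log\frac{f'_{i_1\dots i_n}(x)}{f'_{i_1\dots i_n}(y)}\right|\le C\sum_{k=1}^n|x_{k-1}-y_{k-1}|^\eta.
\end{equation}
Now the contraction estimate (b) does the rest: each map multiplies distances by a factor at most $c_2<1$, so $|x_{k-1}-y_{k-1}|=|f_{i_k\dots i_n}(x)-f_{i_k\dots i_n}(y)|\le c_2^{\,n-k+1}|x-y|\le c_2^{\,n-k+1}$, since $x,y\in[0,1]$. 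Hence the sum in \eqref{z76x} is dominated by $\sum_{j=1}^{\infty}(c_2^{\,j})^\eta=\frac{c_2^\eta}{1-c_2^\eta}$, a finite constant independent of $n$, $(i_1,\dots,i_n)$, and $x,y$. Setting $M:=\frac{C c_2^\eta}{1-c_2^\eta}$ and exponentiating gives $c_3:=e^{-M}$ and $c_4:=e^{M}$, which are the desired constants.

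The argument is essentially routine, so there is no serious obstacle; the one point deserving care is the passage from "$f'_{i_k}$ is $\eta$-Hölder and bounded away from $0$" to the Hölder estimate on $\log|f'_{i_k}|$ — one uses that $t\mapsto\log t$ is Lipschitz on $[c_1,c_2]$ with constant $1/c_1$, together with the (finite) maximum of the Hölder seminorms of $f'_1,\dots,f'_m$. A secondary point is that the telescoping requires the nested points $x_k$ to stay in $[0,1]$, which is exactly guaranteed by $f_i:[0,1]\to[0,1]$, so no separation hypothesis is needed here. It is worth remarking that the same computation, read for $x$ ranging over $[0,1]$ and $y$ a fixed reference point, shows $f'_{i_1\dots i_n}$ has bounded oscillation on each cylinder, which is the form of the estimate most often used in the pressure-theoretic dimension formulas developed in the next section.
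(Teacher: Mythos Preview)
Your argument is the standard one and is correct; the paper itself does not give a proof of this theorem but simply refers to \cite{Pesin97}, where essentially the same chain-rule-plus-geometric-series computation appears. One small slip worth fixing: with your convention $x_k=f_{i_{k+1}\dots i_n}(x)$, the chain rule actually gives $f'_{i_1\dots i_n}(x)=\prod_{k=1}^n f'_{i_k}(x_k)$ rather than $\prod_{k=1}^n f'_{i_k}(x_{k-1})$; this shifts the geometric sum in \eqref{z76x} by one index (so the bound becomes $C/(1-c_2^{\eta})$ instead of $Cc_2^{\eta}/(1-c_2^{\eta})$) but of course does not affect the conclusion.
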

The proof is available in \cite{Pesin97}.
Our aim is to calculate the Hausdorff dimension of the attractor.

\subsection{Hausdorff dimension of self-conformal sets when OSC is assumed}

\begin{theorem}\label{z69}
  Let $\mathcal{F}$ be a conformal IFS on $\mathbb{R}$ as in definition \ref{z79} and we assume that the OSC holds.
  Let $s_0$ be the root of the pressure formula that is we assume that \eqref{z71} holds. Then
  \begin{equation}\label{z68}
    \dim_{\rm H} \Lambda=s_0.
  \end{equation}
\end{theorem}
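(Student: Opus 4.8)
The plan is to prove the two inequalities $\dim_{\rm H}\Lambda\le s_0$ and $\dim_{\rm H}\Lambda\ge s_0$ separately, the first from the obvious cylinder covers and the second from a well-chosen measure together with the Open Set Condition. Throughout write $\|f_{\mathbf i}'\|:=\sup_{x\in[0,1]}|f_{\mathbf i}'(x)|$ for a finite word $\mathbf i=(i_1,\dots,i_n)$ and $\Delta_{\mathbf i}:=f_{\mathbf i}([0,1])$ for the corresponding cylinder interval. On the line $|\Delta_{\mathbf i}|=|f_{\mathbf i}(1)-f_{\mathbf i}(0)|=|f_{\mathbf i}'(\xi)|$ for some $\xi$, so the Bounded Distortion Property \eqref{z78} gives $c_3\|f_{\mathbf i}'\|\le|\Delta_{\mathbf i}|\le\|f_{\mathbf i}'\|$; it also makes $Z_n(s):=\sum_{|\mathbf i|=n}\|f_{\mathbf i}'\|^{s}$ submultiplicative up to a multiplicative constant, so the pressure $P(s):=\lim_n\tfrac1n\log Z_n(s)$ exists and is strictly decreasing, and \eqref{z71} is precisely the statement $P(s_0)=0$.

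For the upper bound I would fix $\varepsilon>0$ and use $P(s_0+\varepsilon)<0$: then $\sum_{|\mathbf i|=n}|\Delta_{\mathbf i}|^{s_0+\varepsilon}\le Z_n(s_0+\varepsilon)\to 0$, while $\max_{|\mathbf i|=n}|\Delta_{\mathbf i}|\le c_2^{\,n}\to 0$, and since $\{\Delta_{\mathbf i}\}_{|\mathbf i|=n}$ covers $\Lambda$ this shows via \eqref{z97} that $\mathcal H^{s_0+\varepsilon}(\Lambda)=0$, hence $\dim_{\rm H}\Lambda\le s_0+\varepsilon$; letting $\varepsilon\downarrow 0$ finishes this half. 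Note it uses neither the $C^{1+\eta}$ smoothness beyond distortion nor the OSC.

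For the lower bound the heart of the matter is to place on $\Lambda$ a probability measure $\nu$ with the Gibbs-type comparison
\[
 \nu(\Delta_{\mathbf i})\asymp \|f_{\mathbf i}'\|^{s_0}\qquad\text{for every finite word }\mathbf i.
\]
I would produce $\nu$ either as $\Pi_*\mu$ for the Gibbs measure $\mu$ of the Hölder potential $s_0\log|f'|$ (thermodynamic formalism), or more directly as a weak-$*$ limit of $\mu_n:=Z_n(s_0)^{-1}\sum_{|\mathbf i|=n}\|f_{\mathbf i}'\|^{s_0}\delta_{x_{\mathbf i}}$, $x_{\mathbf i}\in\Delta_{\mathbf i}$; since $P(s_0)=0$ forces $Z_n(s_0)\asymp 1$, the distortion bound \eqref{z78} turns into the displayed comparison exactly as in the self-similar case. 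Granting this, it suffices to show $\nu(B(x,r))\le C r^{s_0}$ for all $x\in\Lambda$ and small $r>0$, because then $\mathcal H^{s_0}(\Lambda)\ge \nu(\Lambda)/C>0$. Fix $x,r$ and let $\Gamma_r$ be the finite antichain of words $\mathbf j$ that are shortest with $\|f_{\mathbf j}'\|\le r$; by item (b) of Definition \ref{z79} and \eqref{z78} every $\mathbf j\in\Gamma_r$ satisfies $\|f_{\mathbf j}'\|\asymp r$, hence $|\Delta_{\mathbf j}|\asymp r$ and $\nu(\Delta_{\mathbf j})\asymp r^{s_0}$, and $\{\Delta_{\mathbf j}:\mathbf j\in\Gamma_r\}$ covers $\Lambda$. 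So $\nu(B(x,r))\le\sum_{\mathbf j}\nu(\Delta_{\mathbf j})$, the sum over those $\mathbf j\in\Gamma_r$ with $\Delta_{\mathbf j}\cap B(x,r)\ne\emptyset$, and everything reduces to bounding the number $K$ of such $\mathbf j$.

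This last count is exactly where the OSC (Definition \ref{z42}) enters, and it is the only genuinely delicate point. Fixing an open set $V$ realizing the OSC and a ball $B(z_0,\rho_0)\subset V$, the images $f_{\mathbf j}(V)$, $\mathbf j\in\Gamma_r$, are pairwise disjoint, and by the mean value theorem and \eqref{z78} each contains an interval $B(f_{\mathbf j}(z_0),c\rho_0 r)$ with $c>0$ uniform; moreover $f_{\mathbf j}(z_0)\in\Delta_{\mathbf j}$ lies within $O(r)$ of $x$ whenever $\Delta_{\mathbf j}$ meets $B(x,r)$. Thus $K$ pairwise disjoint intervals of length $\asymp r$ are packed inside an interval of length $O(r)$, forcing $K=O(1)$, whence $\nu(B(x,r))\le K\cdot O(r^{s_0})=O(r^{s_0})$ and the proof closes. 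The routine ingredients are the existence of $P$, the distortion comparisons, and the Gibbs estimate for $\nu$; the hard part is turning the separation hypothesis into the uniform bound $K=O(1)$ (a Schief-type volume/packing argument), and that is precisely where OSC is indispensable — the overlapping examples discussed above show $\dim_{\rm H}\Lambda$ can otherwise drop strictly below $s_0$.
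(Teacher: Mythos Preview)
Your proof is correct and follows the same two-step scheme as the paper: cylinder covers for $\dim_{\rm H}\Lambda\le s_0$ and the Gibbs measure $\mu$ for the potential $\phi_{s_0}$ pushed forward by $\Pi$ for $\dim_{\rm H}\Lambda\ge s_0$. The only real differences are in execution. For the upper bound the paper uses \eqref{z70} to get directly $\sum_{|\pmb\omega|=n}|I_{\pmb\omega}|^{s_0}\le (c_1c_3)^{-1}\sum_{|\pmb\omega|=n}\mu([\pmb\omega])=(c_1c_3)^{-1}$, obtaining the sharper $\mathcal H^{s_0}(\Lambda)<\infty$ in one line, whereas you detour through $P(s_0+\varepsilon)<0$; both are fine.

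For the lower bound the paper simply records that $\lim_n \log\Pi_*\mu(I_{i_1\dots i_n})/\log|I_{i_1\dots i_n}|=s_0$ for every $\mathbf i$, identifies this cylinder ratio with the local dimension of $\Pi_*\mu$, and invokes Lemma~\ref{z86}. Your route via the mass distribution principle and the Moran--Schief packing count with the OSC open set $V$ is the standard way to \emph{justify} that identification (passing from cylinders to balls needs the bounded-overlap property you prove), so you are really supplying the step the paper leaves implicit; as a bonus your argument yields $\mathcal H^{s_0}(\Lambda)>0$. One small tidy-up: in the line $\nu(B(x,r))\le\sum_{\mathbf j}\nu(\Delta_{\mathbf j})$ it is cleaner to write $\nu(B(x,r))=\mu(\Pi^{-1}B(x,r))\le\sum_{\mathbf j}\mu([\mathbf j])$, summing over $\mathbf j\in\Gamma_r$ with $\Delta_{\mathbf j}\cap B(x,r)\neq\emptyset$, since the Gibbs comparison \eqref{z70} is stated for $\mu([\mathbf j])$ rather than for $\nu(\Delta_{\mathbf j})$, and overlaps could in principle inflate the latter.
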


\begin{proof}
  First we prove that $\dim_{\rm H} \Lambda \leq s_0$. This is so, since
  the system of level $n$ cylinder intervals
  $\mathcal{I}_n:=\left\{f_{i_1 \dots i_n}([0,1])\right\}_{(i_1 \dots i_n)\in(1, \dots ,m)^n}$ gives a cover of as small diameter as we want if $n $ is large enough. Moreover, by Lagrange Theorem for suitable $x_{\pmb{\omega}}\in[0,1]$
  $$
  \sum\limits_{I\in\mathcal{I}_n}|I|^{s_0}=
  \sum\limits_{|\pmb{\omega}|=n}|f'_{\pmb{\omega}}
  (x_{\pmb{\omega}})|^{s_0} \leq \frac{1}{c_1c_3}\sum\limits_{|\pmb{\omega}|=n}\mu(\pmb{\omega})
  =\frac{1}{c_1c_3}.
  $$
That is $\mathcal{H}^{s_0}(\Lambda)<\infty $ consequently $\dim_{\rm H} \Lambda \leq s_0$.

 Now we prove that $\dim_{\rm H} \Lambda \geq s_0$.
Let $\mu$ be the Gibbs measure for the potential $\phi_{s_0}$
(defined in \eqref{z67}). Fix an arbitrary $\mathbf{i}\in \Sigma$. Then putting together \eqref{10}, \eqref{z71} and \eqref{z70} we obtain the following limit exists
$$\lim\limits_{n\to\infty} \frac{\log\Pi_*\mu(I_{i_1 \dots i_n})}
{\log|I_{i_1 \dots i_n}|}\equiv s_0.$$
That is the local dimension of the measure $\Pi_*\mu$ is equal to $s_0$
at all points of the attractor $\Lambda$. Hence $\dim_{\rm H} \Pi_*\mu=s_0$. This implies that $\dim_{\rm H} \Lambda \geq s_0$.
\end{proof}
We say that the measure $\mu$ in the previous proof is the natural measure for the IFS $\mathcal{F}$.

\subsection{Hausdorff dimension of an invariant measure and Lyapunov exponents}
\label{z54}Now we present the Lyapunov exponents for the classes of maps that occur in this paper.

\emph{ Ergodic measures for a piecewise monotone map on the interval.}
 Let  $\eta$ be an ergodic measure for a $T:[0,1]\to[0,1]$ piecewise monotonic map. Then the Lyapunov exponent $\chi(\eta)=\int \log|T'|d\eta$. It follows from Hoffbauer and Raith \cite[Theorem 1]{hofbauer_raith} that
        \begin{equation}\label{z44}
          \dim_{\rm H}\eta=\frac{h(\mu)}{\chi(\eta)}\quad \mbox{ if }
          \chi(\eta)>0.
        \end{equation}

\section{The Hausdorff dimension of self-affine sets}
\begin{definition}[Self-affine IFS and self-affine measures]\label{z39}
    We say that
    \begin{equation}\label{z38}
      \mathcal{F}:=\left\{f_1(x)=A_1x+t_1, \dots ,f_m(x)=A_mx+t_m\right\}
    \end{equation}
 is a self-affine IFS on $\mathbb{R}^d$ for a $d \geq 2$ if $A_1, \dots ,A_m$ are
    contractive non-singular $d\times d$ matrices and $t_1, \dots ,t_m\in\mathbb{R}^d$.
    The natural projection $\Pi$ from the symbolic  $\Sigma:=\left\{1, \dots ,m\right\}^{\mathbb{N}}$ space to the attractor $\Lambda$ (which is defined as in \eqref{O26})is defined as in the self-similar case: $\Pi(\mathbf{i}):=\lim\limits_{n\to\infty}
         f_{i_1}\circ\cdots\circ f_{i_n}(0)
         $.
The attractors of self-affine IFS  are called self-affine sets. The computation of the dimension of the self-affine
sets is much more difficult. Namely, in the self-similar case if the cylinders are well-separated  that is OSC holds (see Definition \ref{z42}) then
\begin{description}
  \item[(a)] The Hausdorff dimension of the attractor is equal to the similarity dimension $s$, which can be calculated merely  from the contraction ratios ( \eqref{O72} ), regardless the translations, as long as the cylinders remain well separated.
  \item[(b)] The appropriate  dimensional Hausdorff measure of the attractor is positive and finite.
 \item[(c)]  The Hausdorff and the box dimensions of self-similar sets are the same.
\end{description}
\begin{figure}
  \centering
  \includegraphics[width=\textwidth]{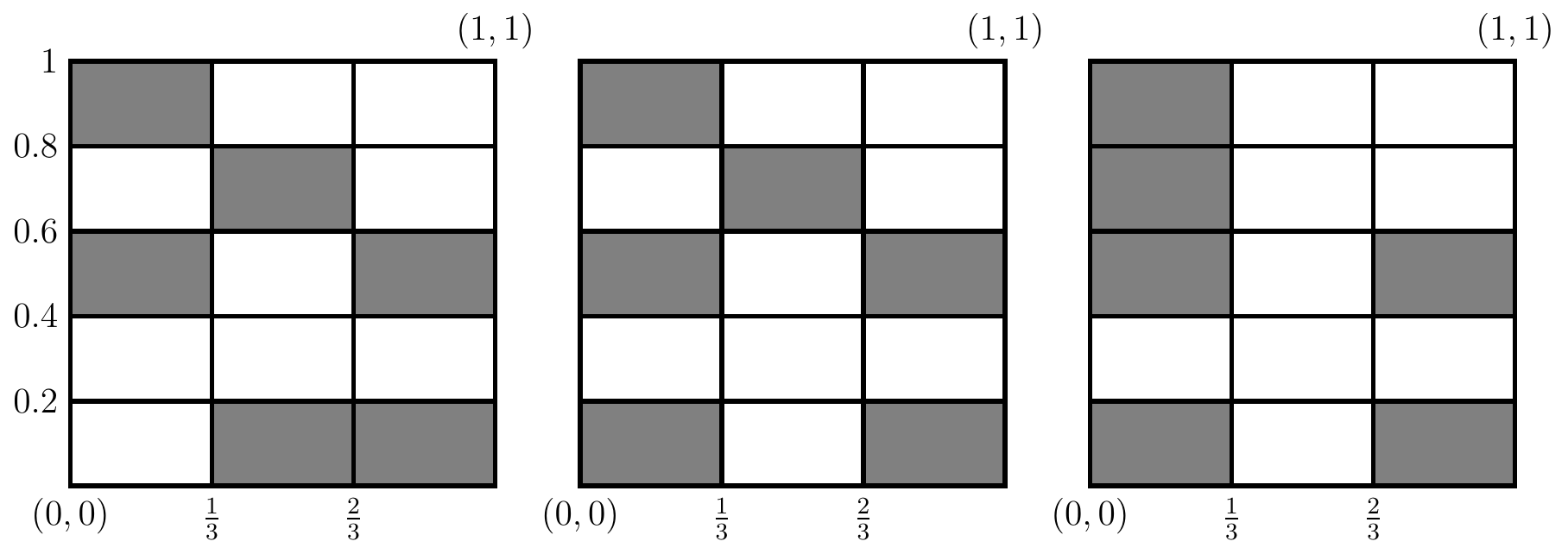}
  \caption{Left: $\alert{\dim_{\rm H} \Lambda^l=\dim_{\rm B}\Lambda^l=\dim_{\rm Aff}\Lambda } $
middle: $\structure{\dim_{\rm H} \Lambda^m<\dim_{\rm B}\Lambda^m=\dim_{\rm Aff}\Lambda^m  }$
right: $\dim_{\rm H} \Lambda^r<\dim_{\rm B}\Lambda^r<\dim_{\rm Aff}\Lambda^r  $
}\label{y81}
\end{figure}

In the self-affine case we will define the affinity dimension which replaces the similarity dimension. However, not any
of the assertions (a)-(c) hold for all self-affine sets with disjoint cylinders.
\begin{example}\label{y80}
On the left-hand side  Figure \ref{y81} we see three copies of the unit square.
Focus on the one which is on the left-hand side.
It contains six shaded rectangles of size $\frac{1}{3}\times \frac{1}{5}$. Denote their left bottom corners by $t_1, \dots ,t_6$ in any particular order. Then we define the IFS
$$\mathcal{F}^{l}:=\left\{f_i(x)=\left(
                                   \begin{array}{cc}
                                     \frac{1}{3} & 0 \\
                                     0 & \frac{1}{5} \\
                                   \end{array}
                                 \right) \cdot x+t_i
\right\}_{i=1}^{6}.$$
Let $\Lambda^l$ be the attractor of $\mathcal{F}^{l}$.
Clearly the first cylinders of $\mathcal{F}^{l}$ are the shaded rectangles on the Figure.
 We say that $\mathcal{F}^l$ and $\Lambda^l$ are generated by the left hand-side of the Figure \ref{y81}. We define $\mathcal{F}^m$, $\Lambda^m$ and
$\mathcal{F}^r$, $\Lambda^r$ respectively, generated by the rectangles in the middle and right-hand side unit squares on Figure \ref{y81}. These self affine sets belongs to the family of Bedford-McMullen carpets (see \cite{falconer2004fractal} for more details).
The linear parts are the same in each of the three systems they differ only in the translation vectors. However, $\dim_{\rm H} \Lambda^l=\dim_{\rm B} \Lambda^l=\dim_{\rm Aff} \Lambda^l$,
$\dim_{\rm H} \Lambda^m<\dim_{\rm B} \Lambda^m=\dim_{\rm Aff} \Lambda^m$ and
$\dim_{\rm H} \Lambda^r<\dim_{\rm B} \Lambda^r<\dim_{\rm Aff} \Lambda^r$, where the affinity dimension $\dim_{\rm Aff} $ plays the same rolle here as the similarity dimension in the case of self-similar sets and it will be defined in Section \ref{y79}.

Moreover, if $d^l$, $d^m$ and $d^r$ are the Hausdorff dimension
of $\Lambda^l$,$\Lambda^m$ and $\Lambda^r$ respectively, then
$$
0<\mathcal{H}^{d^l}(\Lambda^l)<\infty ,\quad
\mathcal{H}^{d^m}(\Lambda^m)=\mathcal{H}^{d^r}(\Lambda^r)=\infty .
$$
\end{example}

For simplicity here we explain everything on the plane but the definitions and discussions in $\mathbb{R}^d$ for $d \geq 3$ are similar. (See e.g. \cite[Section 9.4]{falconer2004fractal} for  the introduction in higher dimension.)

We can define the self-affine measures exactly as we defined self-similar measures in Section \ref{z41}. That is for a probability vector $\mathbf{p}=(p_1, \dots ,p_m)$ the self-affine measure corresponding to $\mathcal{F}$ and $\mathbf{p}$ is
 \begin{equation}\label{z40}
   \nu=\nu_{\mathcal{F},\mathbf{p}}:=\Pi_*(\mathbf{p}^{\mathbb{N}}).
 \end{equation}

\end{definition}

\subsection{Singular value function, affinity dimension,
 Falconer's Theorem}\label{y79}

Most of the basic concepts of this field were introduced by Falconer \cite{falconer1988hausdorff}.
The \emph{singular value function} $\phi^s(A)$ of a matrix $A$
 is defined by
\begin{equation}\label{z28}
  \phi^s(A)=\begin{cases}
\alpha_{\lceil s\rceil}(A)^{s-\lfloor s\rfloor}\prod_{j=1}^{\lfloor s\rfloor}\alpha_j(A) & \text{if }0\leq s\leq\mathrm{rank}(A), \\
|\det(A)|^{s/\mathrm{rank}(A)} & \text{if }\mathrm{rank}(A)<s,
\end{cases}
\end{equation}
where $\alpha_i(A)$ denotes the $i$th singular value of $A$. On the plane, for a non-singular matrix $A$ this is simply
\begin{equation}\label{z29}
  \phi^s(A):=
  \left\{
    \begin{array}{ll}
    \alpha_1(A)  , & \hbox{if $s\leq 1$;} \\
    \alpha_1(A)\alpha_2^{s-1}(A)  , & \hbox{if  $1 \leq s \leq 2$;} \\
   ( \alpha_1(A)\alpha_2(A))^{s/2}  , & \hbox{if $s \geq 2$.}
    \end{array}
  \right.
\end{equation}
Using the singular value function Falconer \cite{falconer1988hausdorff} defined the affinity dimension $\dim_{\rm Aff} \Lambda$ as the root of the subadditive pressure formula
\begin{equation}\label{z04}
  P_{A_1, \dots A_m}(\dim_{\rm Aff} \Lambda)=0,
\end{equation}
where the function $s\mapsto P_{A_1, \dots A_m}(s )$ is defined in the Appendix Example \ref{z27}.
This is the  value of the Hausdorff dimension of $\Lambda$ in most of the cases.
\begin{theorem}[Falconer]\label{z10}
Fix the $d\times d$ non-singular matrices $A_1, \dots ,A_m$ in any particular ways satisfying
$\max\limits_{1 \leq i \leq m}\|A_i\|<1/2$.
For every $\mathbf{t}=(t_1, \dots ,t_m)\in\mathbb{R}^{md}$ we consider
the following self-affine IFS on $\mathbb{R}^d$:
 $\mathcal{F}^{\mathbf{t}}:=\left\{f_i(x):=A_ix+t_i\right\}_{i=1}^{m}$,
where the translations $\mathbf{t}=(t_1, \dots ,t_m)$ are considered as parameters. Then
  $\dim_{\rm H} \Lambda=\dim_{\rm B} \Lambda=\dim_{\rm Aff}\Lambda $ for Lebesgue almost all choices of  $(t_1, \dots, t_m)\in\mathbb{R}^{dm}$.
\end{theorem}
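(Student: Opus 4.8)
The plan is to prove the two estimates separately: (i) $\overline{\dim}_{\rm B}\Lambda\le\dim_{\rm Aff}\Lambda$ for \emph{every} $\mathbf{t}$, and (ii) $\dim_{\rm H}\Lambda\ge\dim_{\rm Aff}\Lambda$ for Lebesgue‑a.e.\ $\mathbf{t}$. Granting these, for a.e.\ $\mathbf{t}$ one has $\dim_{\rm Aff}\Lambda\le\dim_{\rm H}\Lambda\le\underline{\dim}_{\rm B}\Lambda\le\overline{\dim}_{\rm B}\Lambda\le\dim_{\rm Aff}\Lambda$, forcing all four quantities to coincide. (When $\dim_{\rm Aff}\Lambda>d$ the correct conclusion is $\dim_{\rm H}\Lambda=d$ a.e., obtained by running the energy argument below at exponent $s=d$; we suppress this case.) Part (i) is the standard covering bound: the level‑$n$ cylinder $f_{\mathbf{i}}(B)$ lies in an ellipsoid with semiaxes comparable to the singular values $\alpha_1(A_{\mathbf{i}})\ge\cdots\ge\alpha_d(A_{\mathbf{i}})$, which for $k-1<s\le k$ is covered by $\lesssim\alpha_1(A_{\mathbf{i}})\cdots\alpha_{k-1}(A_{\mathbf{i}})/\alpha_k(A_{\mathbf{i}})^{k-1}$ balls of radius $\alpha_k(A_{\mathbf{i}})$, so the relevant Hausdorff sums and (after a fixed‑scale stopping‑time rearrangement) box counts are controlled by $\sum_{|\mathbf{i}|=n}\phi^s(A_{\mathbf{i}})$. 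Submultiplicativity of $\phi^s$ makes $\tfrac1n\log\sum_{|\mathbf{i}|=n}\phi^s(A_{\mathbf{i}})$ converge to $P_{A_1,\dots,A_m}(s)=:P(s)$, which is negative for $s>\dim_{\rm Aff}\Lambda$; hence $\mathcal H^s(\Lambda)=0$ and the box bound follow.

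For (ii) fix $0<s<\dim_{\rm Aff}\Lambda$ and a large cube $Q\subset\mathbb{R}^{md}$ of parameters; it suffices to get the bound for a.e.\ $\mathbf{t}\in Q$ and then let $s\uparrow\dim_{\rm Aff}\Lambda$ along a sequence while exhausting $\mathbb{R}^{md}$ by cubes. Since $P(s)>0$, the subadditive thermodynamic formalism furnishes an $s$‑equilibrium measure $\mu$ on $\Sigma$ for the potential $\mathbf{i}\mapsto\log\phi^s(A_{\mathbf{i}})$, enjoying a quasi‑Gibbs bound $\mu([\mathbf{u}])\le C_\varepsilon\,e^{-|\mathbf{u}|(P(s)-\varepsilon)}\phi^s(A_{\mathbf{u}})$ for every $\varepsilon>0$ (here $[\mathbf{u}]\subset\Sigma$ is the cylinder of words beginning with $\mathbf{u}$); moreover $\sum_{|\mathbf{u}|=n}\phi^s(A_{\mathbf{u}})\le e^{n(P(s)+\varepsilon)}$ for $n$ large. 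Let $\Pi^{\mathbf{t}}$ be the natural projection of $\mathcal{F}^{\mathbf{t}}$ and $\nu^{\mathbf{t}}:=\Pi^{\mathbf{t}}_*\mu$ the self‑affine measure on $\Lambda^{\mathbf{t}}$. By Frostman's lemma, a nonzero measure of finite $s$‑energy is supported on a set of Hausdorff dimension at least $s$, so it is enough to show $I_s(\nu^{\mathbf{t}})=\iint|x-y|^{-s}\,d\nu^{\mathbf{t}}(x)\,d\nu^{\mathbf{t}}(y)<\infty$ for a.e.\ $\mathbf{t}\in Q$, and for this we estimate the averaged energy
\[
\int_Q I_s(\nu^{\mathbf{t}})\,d\mathbf{t}=\iint_{\Sigma\times\Sigma}\Big(\int_Q\frac{d\mathbf{t}}{|\Pi^{\mathbf{t}}(\mathbf{i})-\Pi^{\mathbf{t}}(\mathbf{j})|^{s}}\Big)\,d\mu(\mathbf{i})\,d\mu(\mathbf{j}).
\]

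The heart of the matter is the transversality estimate: if $\mathbf{u}=\mathbf{i}\wedge\mathbf{j}$ is the longest common prefix, then $\int_Q|\Pi^{\mathbf{t}}(\mathbf{i})-\Pi^{\mathbf{t}}(\mathbf{j})|^{-s}\,d\mathbf{t}\le C\,\phi^s(A_{\mathbf{u}})^{-1}$, with $C$ depending only on $d$, $s$, $Q$ and $\max_i\|A_i\|$. Using the telescoping expansion $\Pi^{\mathbf{t}}(\mathbf{i})=\sum_{k\ge1}A_{i_1\cdots i_{k-1}}t_{i_k}$ (convergent since $\|A_{i_1\cdots i_{k-1}}\|\le 2^{-(k-1)}$) and cancelling the first $|\mathbf{u}|$ terms, one factors $\Pi^{\mathbf{t}}(\mathbf{i})-\Pi^{\mathbf{t}}(\mathbf{j})=A_{\mathbf{u}}\,g(\mathbf{t})$, where $g(\mathbf{t})=(t_{i_{n+1}}-t_{j_{n+1}})+(\text{a tail whose }k\text{-th term has norm}\le 2^{-k}\,\mathrm{const})$, $n=|\mathbf{u}|$. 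Since $i_{n+1}\ne j_{n+1}$, differentiating $g$ in the coordinate block $t_{i_{n+1}}$ gives the identity plus a perturbation of norm $\le\sum_{k\ge1}2^{-k}$; with the easier hypothesis $\max_i\|A_i\|<1/3$ this perturbation is a strict contraction, so the change of variables $t_{i_{n+1}}\mapsto g$ has Jacobian bounded above and below on $Q$, and pushing the threshold up to $1/2$ (as in the stated hypothesis) is the more delicate refinement due to Solomyak. One is then left with $\lesssim\int_{|v|\le R}|A_{\mathbf{u}}v|^{-s}\,dv$ for a fixed $R$, and the linear‑algebra lemma $\int_{|v|\le R}|Av|^{-s}\,dv\le c(d,s)R^{d-s}\phi^s(A)^{-1}$ — proved by writing $A$ in singular‑value form and summing the contributions of the spherical shells $\{|v|\approx\rho\}$ according to where $\rho$ falls among the $\alpha_j(A)$, the small‑scale part converging precisely because $s<d$ — completes the estimate.

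Finally, since $(\mu\times\mu)\{(\mathbf{i},\mathbf{j}):\mathbf{i}\wedge\mathbf{j}=\mathbf{u}\}\le\mu([\mathbf{u}])^2$, the averaged energy is bounded by
\begin{align*}
\int_Q I_s(\nu^{\mathbf{t}})\,d\mathbf{t}
&\le C\sum_{n\ge0}\sum_{|\mathbf{u}|=n}\frac{\mu([\mathbf{u}])^2}{\phi^s(A_{\mathbf{u}})}
\le C'\sum_{n\ge0}e^{-2n(P(s)-\varepsilon)}\sum_{|\mathbf{u}|=n}\phi^s(A_{\mathbf{u}}) \\
&\le C''\sum_{n\ge0}e^{-n(P(s)-3\varepsilon)}<\infty ,
\end{align*}
provided $\varepsilon<P(s)/3$. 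Hence $I_s(\nu^{\mathbf{t}})<\infty$, so $\dim_{\rm H}\Lambda^{\mathbf{t}}\ge s$, for a.e.\ $\mathbf{t}\in Q$; letting $s\uparrow\dim_{\rm Aff}\Lambda$ and $Q$ exhaust $\mathbb{R}^{md}$ yields (ii). I expect the main obstacle to be the transversality paragraph: the bounded‑Jacobian change of variables (in particular extracting the sharp threshold $1/2$ rather than the comfortable $1/3$) together with the multi‑scale linear‑algebra lemma — and its genuine restriction $s<d$, which is exactly why, without a separate positivity‑of‑Lebesgue‑measure argument, one only obtains $\min\{d,\dim_{\rm Aff}\Lambda\}$ — carry essentially all of the difficulty; the covering bound, Frostman's lemma, and the basic properties of the subadditive pressure are routine.
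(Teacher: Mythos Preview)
The paper does not give a proof of this theorem; it is stated as Falconer's result with a reference to \cite{falconer1988hausdorff}, so there is no in-paper argument to compare your proposal against.

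That said, your sketch is correct and is precisely the Falconer--Solomyak strategy: the deterministic covering bound for (i), and for (ii) the potential-theoretic lower bound via an averaged $s$-energy, driven by the transversality estimate $\int_Q |\Pi^{\mathbf t}(\mathbf i)-\Pi^{\mathbf t}(\mathbf j)|^{-s}\,d\mathbf t\lesssim \phi^s(A_{\mathbf i\wedge\mathbf j})^{-1}$ together with the linear-algebra lemma $\int_{|v|\le R}|Av|^{-s}\,dv\lesssim \phi^s(A)^{-1}$. You also identify the genuine subtlety correctly: the naive Jacobian bound for the change of variables $t_{i_{n+1}}\mapsto g$ gives a perturbation of norm at most $2\alpha/(1-\alpha)$ with $\alpha=\max_i\|A_i\|$, which is $<1$ only for $\alpha<1/3$; the extension to $\alpha<1/2$ is Solomyak's refinement and cannot be obtained by the crude derivative bound alone. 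The restriction $s<d$ in the integral lemma, and hence the $\min\{d,\dim_{\rm Aff}\Lambda\}$ caveat, is also handled correctly.

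One small deviation from Falconer's original argument worth flagging: you invoke a subadditive equilibrium (K\"aenm\"aki) measure with the quasi-Gibbs bound $\mu([\mathbf u])\le C_\varepsilon e^{-|\mathbf u|(P(s)-\varepsilon)}\phi^s(A_{\mathbf u})$. Falconer's 1988 paper predates that machinery and instead builds the measure by a direct weak-$*$ compactness argument, obtaining $\mu([\mathbf u])\lesssim \phi^s(A_{\mathbf u})/\sum_{|\mathbf v|=|\mathbf u|}\phi^s(A_{\mathbf v})$. Either construction feeds into the same energy summation, so this is a stylistic rather than substantive difference; your version is the cleaner modern packaging.
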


        \bigskip
\section{Ergodic measures for a self-affine IFS}
     Let $\mathcal{F}$ be a self-affine IFS  as in Definition \ref{z39}.
        Then for an arbitrary ergodic measure $\nu$ on $\Sigma$ we have
       \begin{equation}\label{z52}
    \chi_k(\nu):=   \chi_k(\Pi_*\nu):=   \lim\limits_{n\to\infty} \frac{1}{n}
          \log\alpha_k(A_{i_1}\cdots A_{i_n}).
        \end{equation}
         where $\alpha_k(B)$ is the $k$-th singular value of the matrix $B$.

  In high generality we know only almost all type formulas
          for the Hausdorff dimension of $\Pi_*\nu$.
          Namely, we consider the translations $\mathbf{t}=(t_1, \dots ,t_m)$
          as parameters (as in Theorem \ref{z10}) in the self affine IFS of the form \eqref{z38} and we write $\mathcal{F}^{\mathbf{t}}$ instead of $\mathcal{F}$,
           $\Pi^{\mathbf{t}}$ instead of $\Pi$ and $\Pi^{\mathbf{t}}_*\nu$ instead of $\Pi_*\nu$. Then \cite[Theorem 1.9]{JordanPollicottSimon07}
gives an  analogous assertion to  Falconer's theorem (Theorem \ref{z10}) for self-affine measures instead of self-affine sets:
\begin{theorem}[Jordan Pollicott and Simon] Let $\nu$ be an arbitrary ergodic measure on $\Sigma=\left\{1, \dots ,m\right\}^{\mathbb{N}}$.
  If  $\max\limits_{1 \leq i \leq m}\|A_i\|<1/2$
        then for almost all $\mathbf{t}$ (w.r.t. the $m \cdot d$-dimensional Lebesgue measure)
        we have
       \begin{equation}\label{z43}
         \dim_{\rm H} (\Pi^{\mathbf{t}}_*\nu)
         =\min\left\{d,D(\nu)\right\},
       \end{equation}
where $D(\nu)$ is the Lyapunov dimension for the ergodic measure
$\nu$ defined below.
\end{theorem}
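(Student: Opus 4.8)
The plan is to prove the two matching bounds $\dim_{\rm H}(\Pi^{\mathbf{t}}_*\nu)\le\min\{d,D(\nu)\}$ and $\dim_{\rm H}(\Pi^{\mathbf{t}}_*\nu)\ge\min\{d,D(\nu)\}$. The first holds for \emph{every} parameter $\mathbf{t}$ and needs no genericity; only the second uses Lebesgue-almost-every $\mathbf{t}$ and the hypothesis $\max_i\|A_i\|<1/2$. Write $h=h(\nu)$ for the entropy of $\nu$ with respect to the shift $\sigma$, and for $s\in[0,d]$ put $\Phi_\nu(s):=\lim_{n\to\infty}\tfrac1n\log\phi^s(A_{i_1}\cdots A_{i_n})$. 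Since the singular value function is submultiplicative, $\mathbf{i}\mapsto\log\phi^s(A_{i_1}\cdots A_{i_n})$ is subadditive along $\sigma$-orbits, so by Kingman's subadditive ergodic theorem the limit exists $\nu$-a.e.\ and, by ergodicity, equals a deterministic constant — a continuous, strictly decreasing, piecewise-linear function of $s$ assembled from the Lyapunov exponents $\chi_k(\nu)$ of \eqref{z52}. The Lyapunov dimension $D(\nu)$ is defined so that $\Phi_\nu(s)+h>0$ for all $s<\min\{d,D(\nu)\}$ (indeed $\min\{d,D(\nu)\}$ is $d$ if $\Phi_\nu(d)+h>0$, and otherwise the unique root of $\Phi_\nu(s)+h=0$).

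\textbf{Upper bound.} Combine the Shannon--McMillan--Breiman theorem with the limit above through Egorov's theorem: for every $\delta>0$ and all large $n$ there is a family $\mathcal{G}_n$ of level-$n$ cylinders with $\nu\bigl(\bigcup_{\omega\in\mathcal{G}_n}[\omega]\bigr)\ge1-2^{-n}$, $\#\mathcal{G}_n\le e^{n(h+\delta)}$, and $\phi^s(A_\omega)\le e^{n(\Phi_\nu(s)+\delta)}$ for $\omega\in\mathcal{G}_n$. Each $\Pi^{\mathbf{t}}([\omega])$ lies in an ellipsoid whose semi-axes are comparable to the singular values $\alpha_1(A_\omega)\ge\dots\ge\alpha_d(A_\omega)$ of $A_\omega$, hence can be covered by balls of vanishing diameter with $\sum_i|U_i|^{s}\lesssim\phi^s(A_\omega)$; summing over $\mathcal{G}_n$ gives $s$-cost $\lesssim e^{n(h+\Phi_\nu(s)+2\delta)}$. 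For $s>D(\nu)$ these costs are summable once $\delta$ is small, so a Borel--Cantelli argument exhibits a set of full $\Pi^{\mathbf{t}}_*\nu$-measure with $\mathcal{H}^s=0$, whence $\dim_{\rm H}(\Pi^{\mathbf{t}}_*\nu)\le s$; now let $s\downarrow D(\nu)$ and use $\dim_{\rm H}\le d$. (This is the easy half of Falconer's singular value estimate transplanted to $\nu$.)

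\textbf{Lower bound.} This is the substantial part, done by the energy method. Fix a large cube $Q\subset\mathbb{R}^{md}$ and a non-integer $s$ with $0<s<\min\{d,D(\nu)\}$. The analytic core is a transversality estimate, going back to Falconer and sharpened by Solomyak to exactly the range $\max_i\|A_i\|<1/2$:
\[\int_Q\frac{d\mathbf{t}}{\bigl|\Pi^{\mathbf{t}}(\mathbf{i})-\Pi^{\mathbf{t}}(\mathbf{j})\bigr|^{s}}\ \le\ \frac{c}{\phi^s\bigl(A_{\mathbf{i}\wedge\mathbf{j}}\bigr)},\qquad\mathbf{i}\ne\mathbf{j},\]
where $\mathbf{i}\wedge\mathbf{j}$ is the longest common prefix. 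It follows from the exact identity $\Pi^{\mathbf{t}}(\mathbf{i})-\Pi^{\mathbf{t}}(\mathbf{j})=A_{\mathbf{i}\wedge\mathbf{j}}\bigl(\Pi^{\mathbf{t}}(\sigma^{n}\mathbf{i})-\Pi^{\mathbf{t}}(\sigma^{n}\mathbf{j})\bigr)$ with $n=|\mathbf{i}\wedge\mathbf{j}|$, the fact that the two shifted points start with different symbols so their difference moves transversally as $\mathbf{t}$ varies, and the bound $\int_{B(0,1)}|Tz|^{-s}\,dz\le C(d,s)\,\phi^s(T)^{-1}$ valid for non-integer $s\in(0,d)$. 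Integrating the $s$-energy $I_s(\mu)=\iint|x-y|^{-s}\,d\mu\,d\mu$ of $\Pi^{\mathbf{t}}_*\nu$ over $Q$ and using Tonelli,
\[\int_Q I_s(\Pi^{\mathbf{t}}_*\nu)\,d\mathbf{t}\ \le\ c\sum_{n=0}^{\infty}\sum_{|\omega|=n}\frac{\nu([\omega])^{2}}{\phi^s(A_\omega)}\ =\ c\int_\Sigma\sum_{n=0}^{\infty}\frac{\nu([\mathbf{i}|_n])}{\phi^s(A_{\mathbf{i}|_n})}\,d\nu(\mathbf{i}),\]
and $\tfrac1n\log\bigl(\nu([\mathbf{i}|_n])/\phi^s(A_{\mathbf{i}|_n})\bigr)\to-(h+\Phi_\nu(s))<0$ for $\nu$-a.e.\ $\mathbf{i}$, so the inner series converges geometrically. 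Since for a merely ergodic $\nu$ this pointwise statement alone does not control the integral, I would first restrict by Egorov to a set $\Sigma_\varepsilon$ with $\nu(\Sigma_\varepsilon)>1-\varepsilon$ on which those convergences are uniform; then $\int_Q I_s\bigl(\Pi^{\mathbf{t}}_*(\nu|_{\Sigma_\varepsilon})\bigr)\,d\mathbf{t}<\infty$, hence $I_s\bigl(\Pi^{\mathbf{t}}_*(\nu|_{\Sigma_\varepsilon})\bigr)<\infty$ for a.e.\ $\mathbf{t}\in Q$. By the energy form of Frostman's lemma this forces $\underline{\dim}(\Pi^{\mathbf{t}}_*\nu,x)\ge s$ for $\Pi^{\mathbf{t}}_*\nu$-a.e.\ $x\in\Pi^{\mathbf{t}}(\Sigma_\varepsilon)$; letting $\varepsilon\downarrow0$, then $s\uparrow\min\{d,D(\nu)\}$ along non-integers, then exhausting $\mathbb{R}^{md}$ by cubes, and invoking Lemma~\ref{z86}, we obtain $\dim_{\rm H}(\Pi^{\mathbf{t}}_*\nu)\ge\min\{d,D(\nu)\}$ for Lebesgue-a.e.\ $\mathbf{t}$, which together with the upper bound proves the theorem.

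\textbf{Main obstacle.} The serious step is the transversality estimate above: one must show that, after choosing an appropriate $d$-dimensional slice among the $md$ parameters, $\mathbf{t}\mapsto\Pi^{\mathbf{t}}(\sigma^n\mathbf{i})-\Pi^{\mathbf{t}}(\sigma^n\mathbf{j})$ is a diffeomorphism with derivative controlled in terms of the identity, and to carry the constants carefully enough that $1/2$ (rather than Falconer's original $1/3$) is the admissible bound on $\max_i\|A_i\|$. A lesser, routine nuisance is that for a general ergodic measure the Shannon--McMillan--Breiman and Kingman limits carry no quantitative rate, which is precisely why the Egorov restriction in the last step is necessary rather than optional.
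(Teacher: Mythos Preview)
The paper does not prove this theorem at all: it is stated as a quoted result, with the attribution ``\cite[Theorem 1.9]{JordanPollicottSimon07}'' immediately preceding the statement, and no proof follows. This is consistent with the paper's declared character as a survey.

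Your sketch is a faithful outline of the original Jordan--Pollicott--Simon argument: the deterministic upper bound via singular-value covers combined with Shannon--McMillan--Breiman and Kingman, and the almost-sure lower bound via the Falconer--Solomyak transversality integral and the potential-theoretic (energy) method, with the Egorov restriction handling the lack of uniform rates for a general ergodic $\nu$. The identification of the transversality estimate as the crux, and of the $1/2$ threshold as Solomyak's sharpening of Falconer's $1/3$, is accurate. So your proposal is correct and matches the approach of the source paper that this survey is citing; there is simply nothing in the present paper to compare it against.
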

\begin{definition}
Let $\mathcal{F}$ be a self-affine IFS  as in Definition \ref{z39}.
        Then for an arbitrary ergodic measure $\nu$ on $\Sigma$
  \begin{equation}\label{z20}
    D(\nu):=
         k+\frac{h(\nu)+\chi_1(\nu)+\cdots+\chi_k(\nu)}{-\chi_{k+1}(\nu)},
  \end{equation}
if $k=k(\nu)=\max\left\{i:0<h(\nu)
+\chi_1(\nu)+\cdots+\chi_i(\nu)
\right\} \leq d$.
On the other hand, if        $0<h(\nu)
+\chi_1(\nu)+\cdots+\chi_d(\nu)$ then we define
\begin{equation}\label{z14}
   D(\nu):=d \cdot
\frac{h(\nu)}{-(\chi_1(\nu)+\cdots+\chi_d(\nu))}.
\end{equation}
We call $D(\nu)$ the Lyapunov dimension of the measure $\nu$.
\end{definition}

\begin{example}\label{z11}
 In this paper we mostly work on the plane ($d=2$). In this case
\begin{equation}\label{z12}
D(\nu)=
\left\{
  \begin{array}{ll}
  \frac{h(\nu)}{|\chi_1(\nu)|}  , & \hbox{if
$h(\nu) \leq |\chi_1(\nu)| $ ;} \\
1+\frac{h(\nu)-|\chi_1(\nu)|}{|\chi_2(\nu)|}    ,
 & \hbox{if $ |\chi_1(\nu)| \leq h(\nu) \leq |\chi_1(\nu)|+|\chi_2(\nu)|$;} \\
 2 \cdot \frac{h(\nu)}{|\chi_1(\nu)|+|\chi_2(\nu)}|   , & \hbox{if $|\chi_1(\nu)|+|\chi_2(\nu)| \leq h(\nu)$.}
  \end{array}
\right.
 \end{equation}
\end{example}

Recently there have been a number of very significant achievements on this field.
Here we mention only one of them. B\'ar\'any, Hocfhman and Rapaport   \cite[Theorem 1.2]{BHR} computed the Hausdorff dimension of self-affine measures under some mild conditions. They obtained this by
combining the entropy growth theorem by Hochman \cite{hochman2012self} with the method of B\'ar\'any and K\"aenm\"aki \cite{barany2017ledrappier} about the dimension of the projections of self-affine measures, that they got by an application of the Furstenberg measures.
\subsection{Self-affine measures}
\begin{definition}\label{z08}
 Let  $\mathcal{F}:=\left\{f_i(x):=A_ix+t_i\right\}_{i=1}^{m}$ be a self-affine IFS on $\mathbb{R}^d$ and let $\mathbf{p}$ be a probability vector. Then the corresponding self-affine measure can be defined exactly as we defined the self-similar measures. That is
\begin{equation}\label{z06}
  \nu=\nu_{\mathcal{F},\mathbf{p}}:=\Pi_*\left(\mathbf{p}^{\mathbb{N}}\right),
\end{equation}

\end{definition}
In their very recent seminal paper B\'ar\'any, Hochman and Rapaport \cite[Theorems 1.1 and 1.2]{BHR} proved the following


\begin{theorem}[B\'ar\'any, Hochman and Rapaport]\label{z01}
   Let
 $\mathcal{F}:=\left\{f_i(x):=A_ix+t_i\right\}_{i=1}^{m}$ be a self-affine IFS on $\mathbb{R}^2$ which satisfies both of the following conditions:
 \begin{description}
   \item[(a)] the strong open set condition (see Definition \ref{z42}) and
   \item[(b)] The normalized linear parts $\left\{A_i/\sqrt{|\det A_i|}\right\}_{i=1}^{m}$
 generate a non-compact and totally irreducible subgroup of $GL_2(\mathbb{R}^d)$ (that is they  do not preserve any finite union of non-trivial linear
spaces,)
\end{description}
Then for an arbitrary probability vector $\mathbf{p}$ we have
\begin{equation}\label{z02}
  \dim_{\rm H} \nu_{\mathcal{F},\mathbf{p}}=D(\nu_{\mathcal{F},\mathbf{p}}) \mbox{ and }
\dim_{\rm H} \Lambda=\dim_{\rm B}\Lambda=\dim_{\rm Aff}\Lambda,
\end{equation}
where $\Lambda$ is the attractor of $\mathcal{F}$
   and we remind the reader that the affinity dimension $\dim_{\rm Aff}$ was defined in \eqref{z04}.
\end{theorem}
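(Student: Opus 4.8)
The plan is to deduce both halves of \eqref{z02} from a single dimension-formula statement for self-affine measures, following the strategy of \cite{BHR}. First I would reduce the set statement to the measure statement. Observe that the affinity dimension $\dim_{\rm Aff}\Lambda$ equals the supremum of the Lyapunov dimensions $D(\nu)$ over all ergodic (indeed over all Bernoulli) measures on $\Sigma$; this is a variational-principle identity for the subadditive pressure \eqref{z04}, coming from the fact that the subadditive pressure is the Legendre-type supremum of (entropy) $-$ (the subadditive Lyapunov integrand). Choosing a Bernoulli measure $\nu_{\mathcal F,\mathbf p}$ whose probability vector $\mathbf p$ nearly maximizes $D(\nu_{\mathcal F,\mathbf p})$, and applying the measure formula $\dim_{\rm H}\nu_{\mathcal F,\mathbf p}=D(\nu_{\mathcal F,\mathbf p})$, we get $\dim_{\rm H}\Lambda\ge\sup_{\mathbf p}D(\nu_{\mathcal F,\mathbf p})=\dim_{\rm Aff}\Lambda$; combined with the general upper bounds $\dim_{\rm H}\Lambda\le\dim_{\rm B}\Lambda\le\dim_{\rm Aff}\Lambda$ (the latter always holds — the affinity dimension dominates the box dimension for any self-affine set), this gives the chain of equalities. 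So everything comes down to proving $\dim_{\rm H}\nu_{\mathcal F,\mathbf p}=D(\nu_{\mathcal F,\mathbf p})$ under hypotheses (a) and (b).

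For the measure formula the upper bound $\dim_{\rm H}\nu_{\mathcal F,\mathbf p}\le D(\nu_{\mathcal F,\mathbf p})$ is the easy direction: it holds for every self-affine measure with no separation hypothesis, by covering a large-measure set with the natural rank-$k$ "ellipse" pieces at scale determined by the singular values $\alpha_k(A_{i_1}\cdots A_{i_n})$ and estimating via Shannon–McMillan–Breiman together with the subadditive ergodic theorem giving the exponents $\chi_k(\nu)$ in \eqref{z52}. The substance is the lower bound. Here the architecture is: (i) by the Ledrappier–Young-type analysis of Bárány–Käenmäki \cite{barany2017ledrappier}, the dimension of $\nu_{\mathcal F,\mathbf p}$ decomposes along the dominated-splitting flag into the dimension of the projection onto the strong-stable (fastest-contracting) direction plus a conditional dimension along fibres, and the latter, fibrewise, is governed by the projections of $\nu$ to lines; (ii) one is thus reduced to showing that the projections of $\nu_{\mathcal F,\mathbf p}$ to \emph{every} line have the dimension predicted by the Lyapunov dimension — equivalently, there is no dimension drop in any one-dimensional projection. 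Step (ii) is where the non-compactness and total irreducibility in hypothesis (b) enter: they force the Furstenberg measure on $\mathbb{RP}^1$ associated to the random matrix products $A_{i_1}A_{i_2}\cdots$ to have exact dimension and, crucially, to be non-atomic with a self-conformal-like structure, so that Hochman's inverse theorem / entropy-growth-under-convolution machinery from \cite{hochman2012self} applies. The mechanism is that any dimension drop in a projection would, via the Furstenberg measure, produce an "almost exact overlap" at arbitrarily fine scales for the associated one-parameter conformal system, which the exponential separation guaranteed by (a) together with irreducibility rules out.

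The main obstacle is step (ii) — controlling \emph{all} linear projections of the self-affine measure simultaneously and ruling out any exceptional projected direction. This is genuinely the heart of \cite{BHR}: it requires setting up the random walk on $GL_2(\mathbb R)$ driven by $\mathbf p$, establishing positivity of the top Lyapunov exponent and the Furstenberg–Kifer–Hennion exact-dimensionality of the stationary measure, and then running Hochman's entropy-increase argument in the fibred setting to upgrade "no exact overlaps" (a consequence of the SOSC in hypothesis (a) plus irreducibility) into "full projected dimension". I would treat the Ledrappier–Young scaffolding (step (i)) and the variational principle for $\dim_{\rm Aff}$ as black boxes from \cite{barany2017ledrappier} and the literature, and concentrate the real work on the Furstenberg-measure dimension estimate and its coupling to Hochman's theorem; everything else is bookkeeping between the symbolic space $\Sigma$, the flag dynamics, and the projections.
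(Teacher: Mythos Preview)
The paper does not give its own proof of Theorem~\ref{z01}: it simply attributes the result to \cite[Theorems 1.1 and 1.2]{BHR} and summarizes the method in one sentence, namely that it combines Hochman's entropy-growth theorem \cite{hochman2012self} with the Ledrappier--Young type projection results of B\'ar\'any and K\"aenm\"aki \cite{barany2017ledrappier} via Furstenberg measures. Your proposal is exactly an expansion of that one-sentence summary and correctly identifies the architecture of the actual proof in \cite{BHR}: the reduction from the set statement to the measure statement via the variational principle, the easy upper bound, and the hard lower bound obtained by (i) the Ledrappier--Young decomposition reducing to projections and (ii) ruling out dimension drop in every projection using the Furstenberg measure and Hochman's inverse theorem.

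One small caution: the supremum of Lyapunov dimensions over Bernoulli measures need not in general attain the affinity dimension; in \cite{BHR} the equality $\dim_{\rm H}\Lambda=\dim_{\rm Aff}\Lambda$ is obtained by applying the measure result not to Bernoulli measures but to (Gibbs-type) equilibrium states for the singular value potential, which are only shift-invariant ergodic. Since the measure theorem in \cite{BHR} is proved for arbitrary ergodic measures (not just Bernoulli ones), this is not a real obstacle, but your reduction paragraph should invoke ergodic measures rather than Bernoulli $\mathbf p$'s to be safe.
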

This theorem does not cover the case of those self affine IFS  for which all of the mappings have lower triangular linear parts. However,
 the same authors proved in \cite[Proposition 6.6]{BHR}

\begin{theorem}[B\'ar\'any, Hochman and Rapaport]\label{z00}
 Let
 $\mathcal{F}:=\left\{f_i(x):=A_ix+t_i\right\}_{i=1}^{m}$ be a self-affine IFS on $\mathbb{R}^2$ which satisfies both of the following conditions:
\begin{description}
\item[(c)] The linear parts of all of the mapping of $\mathcal{F}$ are lower triangular: \newline $A_i=\left(
                       \begin{array}{cc}
                         a_i & 0 \\
                        b_i & c_i \\
                       \end{array}
                     \right)
$ for $i=1, \dots ,m$ and
\item[(d)] $a_i<c_i$ for all $i=1, \dots ,m$.
 \end{description}
Then
 for an arbitrary probability vector $\mathbf{p}$ we have
\begin{equation}\label{z02}
  \dim_{\rm H} \nu_{\mathcal{F},\mathbf{p}}=D(\nu_{\mathcal{F},\mathbf{p}}) \mbox{ and }
\dim_{\rm H} \Lambda=\dim_{\rm B}\Lambda=\dim_{\rm Aff}\Lambda,
\end{equation}
where $\Lambda$ is the attractor of $\mathcal{F}$.

\end{theorem}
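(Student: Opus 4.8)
Our plan is to prove the statement about the measure $\nu=\nu_{\mathcal F,\mathbf p}$ first, and then to deduce the statement about the attractor $\Lambda$ from it by a variational argument.

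\emph{Step 1 (set-up for the measure).} We would start from the fact (Feng--Hu) that $\nu$ is exact dimensional, so that $\dim_{\rm H}\nu$ is the $\mathbf p^{\mathbb N}$-almost sure value of the local dimension, and from the Ledrappier--Young type formula for planar self-affine measures of B\'ar\'any and K\"aenm\"aki \cite{barany2017ledrappier}. Writing $E^{ss}(\mathbf i)$ for the (in general word-dependent) strong stable line of the cocycle $n\mapsto A_{i_1}\cdots A_{i_n}$, this formula reads
\[
\dim_{\rm H}\nu=\int\Big(\dim_{\rm H}\big(\mathrm{proj}_{E^{ss}(\mathbf i)^{\perp}}\,\nu\big)+\dim_{\rm H}\nu^{\mathbf i}\Big)\,d\mathbf p^{\mathbb N}(\mathbf i),
\]
where $\nu^{\mathbf i}$ is the conditional measure of $\nu$ on the leaf $\Pi(\mathbf i)+E^{ss}(\mathbf i)$ and each summand lies in $[0,1]$. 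Assumption (c) also lets us compute the Lyapunov exponents explicitly: the products $A_{i_1}\cdots A_{i_n}$ are lower triangular with diagonal $(\prod_k a_{i_k},\prod_k c_{i_k})$ and, by (d), the off-diagonal entry is of order at most $\prod_k|c_{i_k}|$, so that $\chi_1(\nu)=\sum_i p_i\log|c_i|$ and $\chi_2(\nu)=\sum_i p_i\log|a_i|$, whence $0<|\chi_1(\nu)|\le|\chi_2(\nu)|$.

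\emph{Step 2 (using the triangular structure).} By (c) the vertical direction $\mathbb R e_2$ is a common eigendirection, so $\mathcal F$ is a skew product over the self-similar IFS $\bar{\mathcal F}=\{x\mapsto a_ix+(t_i)_1\}$ on $\mathbb R$; the projection $\pi(x,y)=x$ satisfies $\pi\circ f_i=\bar f_i\circ\pi$, so $\pi_*\nu=\nu_{\bar{\mathcal F},\mathbf p}$. Consequently $E^{ss}(\mathbf i)$ is never vertical, every leaf $\Pi(\mathbf i)+E^{ss}(\mathbf i)$ meets each vertical fibre exactly once, and the two terms in the formula of Step~1 can be identified: after the bi-Lipschitz change of coordinates induced by the invariant vertical fibration, the transverse projection is governed by the slow ratios $\{c_i\}$, while the conditional $\nu^{\mathbf i}$, pushed forward by $\pi$, is comparable to a self-similar type measure built from the fast ratios $\{a_i\}$. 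Applying Hochman's Theorem~\ref{k79} to the two resulting self-similar families on the line — here one invokes the exponential separation (Condition~\ref{y96}) available for the class under study — together with the accompanying conditional entropy estimates, we would obtain, for $\mathbf p^{\mathbb N}$-almost every $\mathbf i$,
\[
\dim_{\rm H}\big(\mathrm{proj}_{E^{ss}(\mathbf i)^{\perp}}\,\nu\big)=\min\Big\{1,\frac{h(\mathbf p)}{|\chi_1(\nu)|}\Big\},\qquad
\dim_{\rm H}\nu^{\mathbf i}=\min\Big\{1,\frac{\big(h(\mathbf p)-|\chi_1(\nu)|\big)^{+}}{|\chi_2(\nu)|}\Big\},
\]
where $h(\mathbf p)=-\sum_i p_i\log p_i$. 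Adding these and going through the three cases according to the size of $h(\mathbf p)$ relative to $|\chi_1(\nu)|$ and $|\chi_1(\nu)|+|\chi_2(\nu)|$, we recover exactly the three regimes of Example~\ref{z11}, i.e.\ $\dim_{\rm H}\nu=D(\nu)$ (read, as in Example~\ref{z11}, capped by $d=2$).

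\emph{Step 3 (from the measure to the attractor).} We always have $\dim_{\rm H}\Lambda\le\underline{\dim}_{\rm B}\Lambda\le\overline{\dim}_{\rm B}\Lambda\le\dim_{\rm Aff}\Lambda$, the last inequality being Falconer's \cite{falconer1988hausdorff}. For the reverse we would use the variational principle for the subadditive singular value pressure, which expresses $\dim_{\rm Aff}\Lambda$ as the supremum of $D(\nu_{\mathcal F^{n},\mathbf p})$ over all $n\ge1$ and all probability vectors $\mathbf p$ on $\{1,\dots,m\}^{n}$, where $\mathcal F^{n}$ denotes the $n$-fold composition IFS. Each $\mathcal F^{n}$ is again lower triangular and still satisfies (d), so Step~2 applies to it and gives $\dim_{\rm H}\Lambda\ge\dim_{\rm H}\nu_{\mathcal F^{n},\mathbf p}=D(\nu_{\mathcal F^{n},\mathbf p})$ for every admissible $\mathbf p$; taking the supremum yields $\dim_{\rm H}\Lambda\ge\dim_{\rm Aff}\Lambda$, so all four quantities coincide.

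\emph{Main obstacle.} The crux is Step~2 and in particular the two displayed dimension identities. Since the linear parts share the eigendirection $e_2$ they are \emph{not} totally irreducible, so the Furstenberg-measure input that drives the proof of Theorem~\ref{z01} is unavailable and must be replaced by an analysis adapted to the skew-product geometry; and even granting that reduction, one still has to run the Hochman-type entropy-increase argument for the relevant (conditional) self-similar families on the line and rule out an anomalous drop of dimension in either the transverse projection or the fibrewise conditionals. Once that is in place the remainder is bookkeeping with the Lyapunov dimension.
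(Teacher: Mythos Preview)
The paper does not prove this theorem: it is quoted verbatim from \cite[Proposition~6.6]{BHR} and no argument is given. There is therefore no ``paper's own proof'' to compare your plan against; your outline is in fact a sketch of the strategy used in \cite{BHR} itself (Ledrappier--Young formula from \cite{barany2017ledrappier} for the planar measure, reduction of each term to a one-dimensional problem via the common eigendirection, Hochman's theorem on the line, then a variational/subadditive argument for the attractor).

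Two substantive comments nonetheless. First, the statement as printed in this survey is almost certainly incomplete: the labels (c),(d) continue the enumeration (a),(b) of Theorem~\ref{z01}, and without some separation hypothesis the conclusion is simply false (take all $t_i=0$). In Step~2 you silently repair this by invoking HESC ``available for the class under study''; you should recognise that you are importing a hypothesis that is not in the stated theorem, and that what \cite{BHR} actually assume is exponential separation for the planar system, which then has to be pushed down to the one-dimensional projections --- this transfer is itself a nontrivial step.

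Second, your identification in Step~2 of the two terms of the Ledrappier--Young formula is too quick. Projecting transverse to $E^{ss}(\mathbf i)$ does reduce to a self-similar problem with the slow ratios $c_i$, but the conditionals $\nu^{\mathbf i}$, even after pushing forward by $\pi$, are not self-similar measures for $\bar{\mathcal F}$: they are \emph{slices} of $\pi_*\nu=\nu_{\bar{\mathcal F},\mathbf p}$, and computing their dimension requires a separate entropy argument (this is where the real work in \cite{BHR} lies). You do flag this as the main obstacle, which is accurate; just be aware that ``apply Hochman'' is a placeholder for several pages rather than a single invocation.
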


        \bigskip
\section{ Ergodic measures for Barnsley's skew product maps}
 We use the notation of Section \ref{z49}.
 Let $\mu$ be an ergodic measure for the  Barnsley's skew product map $F$, which was  defined in Section \ref{z49}. The two Lyapunov exponents $\chi_1(\mu)$ and $\chi_2(\mu)$ of $F$ are
 \begin{eqnarray*}
&&\chi_x(\mu)=\int\log\|D_{\mathrm{proj}(\mathbf{x})}
f\|
\mathrm{d}\mu(\mathbf{x})=
\sum_{i=1}^m\mu(I_i\times\mathbb{R})\log\gamma_i\text{ and }\\
&&\chi_y(\mu)=\int\log\|\partial_2g(\mathbf{x})\|
\mathrm{d}\mu(\mathbf{x})=
\sum_{i=1}^m\mu(I_i\times\mathbb{R})\log\lambda_i,
\end{eqnarray*}
  where $\mathrm{proj}(\mathbf{x})$ is the orthogonal projection of an $\mathbf{x}\in D$ to the $x$-axis and $\partial_2$ means the derivative with respect to the second coordinate.
  \begin{remark}\label{z37}
    If $0<\chi_x(\mu)\leq\chi_y
    (\mu)$ then
$$
\dim\mu=\frac{h(\mu)}{\chi_x(\mu)},
$$

Namely, the upper bound is trivial and the lower bound follows from the fact that $\mathrm{proj}_*\mu$ is $f$-invariant and ergodic and the result of Hofbauer and Raith \cite[Theorem~1]{hofbauer_raith} (see \eqref{z44}).
That is why we can restrict ourselves to the case when
\begin{equation}\label{z36}
\chi_1(\mu):= \chi_x(\mu)= \sum_{i=1}^m\mu(I_i\times\mathbb{R})\log\gamma_i
>
\chi_2(\mu):=\chi_y(\mu)=
\sum_{i=1}^m\mu(I_i\times\mathbb{R})\log\lambda_i
>0.
\end{equation}
In this case the best guess for the dimension of the $\mu$ is the so-called Lyapunov dimension to be defined below.
  \end{remark}
\begin{definition}\label{z35}
  Let  $\mu\in\mathcal{E}_F(\Lambda)$ satisfying $\chi_x(\mu)>\chi_y(\mu)>0$. We define the Lyapunov dimension
\begin{equation}\label{y97}
D(\nu):=
\left\{
  \begin{array}{ll}
  \frac{h(\nu}{\chi_y(\nu)}  , & \hbox{if
$h(\nu) \leq \chi_y(\nu) $ ;} \\
1+\frac{h(\nu)-\chi_y(\nu)}{\chi_x(\nu)}    ,
 & \hbox{if $\chi_y(\nu) \leq h(\nu) \leq \chi_x(\nu)+\chi_y(\nu)$;} \\
 2 \cdot \frac{h(\nu)}{\chi_x(\nu)+\chi_y(\nu)}   , & \hbox{if $\chi_x(\nu)+\chi_y(\nu) \leq h(\nu)$.}
  \end{array}
\right.
 \end{equation}
\end{definition}

\section{Hofbauer's Pressure}

In the previous sections (and in the appendix) we presented the dimension theory for the self-affine iterated function systems. However, the principal distinction of the Barnsley's  maps from the iterated function systems lies in the fact that the symbolic space for the Barnsley's skew product map is not a full shift. In this section we will present the most general version of thermodynamical formalism theory, developed in a series of papers by Franz Hofbauer with his co-authors. This theory is not completely general, it assumes the system comes form piecewise monotone maps of the interval, but this assumption is satisfied in our situation.

Let us remind the notations. Our base map $f:[0,1]\to [0,1]$ is piecewise monotone: we can divide the interval $[0,1]$ into finitely many closed intervals with disjoint interiors $[0,1]=\bigcup_1^m I_i$. We denote by $\mathfrak{S}$ the set of endpoints of intervals $I_i$. We assume that $f|_{I_i^o}$ is continuous and monotone (strictly increasing or strictly decreasing) on $I_i^o$. We define $f_i$ as the extension of $f|_{I_i^o}$ by continuity to the endpoints of $I_i$.

In order that the symbolic expansion of the system (to be defined below) is compact, we need to take a formal modification of the maps. We would like to consider $f_i$ as the restriction of $f$ to $I_i$. Naturally, such a definition can in general lead to the map being doubly defined on some points in $\mathfrak{S}_\infty$, but this set is countable. Formally speaking, if for a point $x\in \mathfrak{S}$ the left and right limits of $f$ disagree then we define $f(x_-)=\lim_{z\nearrow x} f(z)$ and $f(x_+)=\lim_{z\searrow x} f(z)$. We then proceed to inductively double all the preimages of $x$. For a point $y\in f^{-1}(x), y\notin \mathfrak{S}$ we define: if $f$ is increasing at $y$ then $f(y-)=x_-$ and $f(y_+)=x_+$, otherwise $f(y-)=x_+$ and $f(y_+)=x_-$. And for a point $y\in f^{-1}(x), y\in \mathfrak{S}$: if $\lim_{z\nearrow y} f(z) =x$ and $f$ is increasing in $(y-\varepsilon, y)$ then $f(y_-)=x_-$, if it is decreasing then $f(y_-)=x_+$, if $\lim_{z\searrow y} f(z) =x$ and $f$ is increasing in $(y, y+\varepsilon)$ then $f(y_+)=x_+$, if it is decreasing then $f(y_+)=x_-$. We set the natural topology: at each doubled point $x$ $\lim_{z\nearrow x} z = x_-, \lim_{z\searrow x} z = x_+$. We also redefine the partition intervals: if $I_i=[x,y]$ and one or both of the endpoints are doubled then we set $I_i=[x_+, y_-]$.

Observe that the resulting set is not an interval anymore, but a Cantor set - but with a natural projection onto the interval, which is 2-1 on a countable set and 1-1 elsewhere. The well-known special case of this construction: consider the interval $[0,1]$ with the map $f(x)=2x (\mod 1)$ and divide each dyadic point into two. That is, $1/2 = 0.10000..._2 = 0.01111..._2$, we formally define $(1/2)_-=0.01111..._2$ and $(1/2)_+=0.10000..._2$ -- and the same for all the other dyadic points. The result is a full shift on two symbols, which is conjugate (modulo a countable set) to the original map.

Note that for the piecewise monotone map the minimal possible partition is given by the intervals of monotonicity of $f$, but we can freely subdivide the intervals $I_i$ further, and the resulting maps will also belong to considered class. In particular, we can freely demand that for any given continuous potential $\varphi:[0,1]\to \mathbb{R}$ its variation $\sup \varphi - \inf \varphi$ is arbitrarily small on each $I_i$.

Let $A$ be a compact, $f$-invariant, $f$-transitive set. For the rest of the section, our dynamical system will be the restriction of $f$ to $A$.

Let $\widetilde{\Sigma}\subset \{1,\ldots,m\}^{\mathbb{N}}$ be the symbolic system of our dynamics, defined as the set of sequences $\omega\in \{1,\ldots,m\}^{\mathbb{N}}$ such that there exists $x\in A$ such that for $n=0,1,\ldots$

\[
f^n(x) \in I_{\omega_n}.
\]
One can check that $\widetilde{\Sigma}$ is a {\it subshift}, that is a $\sigma$-invariant and closed subset of $\{1,\ldots,f\}^{\mathbb{N}}$. The sequence $\omega$ will be called {\it symbolic expansion} of $x$, $x$ will be called {\it representation} of $\omega$. We will write  $x=\pi(\omega)$. We will assume the partition $\{I_i\}$ is {\it generating}, that is each $\omega\in\widetilde{\Sigma}$ has unique representation. This always holds if $f$ is expanding.

For any finite word $\tau^n\in \{1,\ldots,m\}^n$ denote by $C[\tau^n]$ the set of points $x\in A$ such that $\pi^{-1}(x)$ begins with $\tau^n$. This set will be called $n$-th level {\it cylinder}. The set of $n$-th level cylinders will be denoted $D_n$. For $x\in A$, let $C_n(x)$ be the $n$-th level cylinder containing $x$. Denote $d_n(x) = \mathrm{diam} C_n(x)$ and $\varphi_n(x) = \sup \{\varphi(y)-\varphi(z); y,z\in C_n(x)\}$. We have

\[
\lim_{n\to\infty} d_n(x) = \lim_{n\to\infty} \varphi_n(x) =0.
\]

\begin{definition}\label{y86}
 We say that $A$ is {\it Markov} if there exists such partition $\{I_i\}$ and such $n$ that for every $n$-th level cylinder $C[\tau^n]$ its image $T(C[\tau^n])$ is a union of $n$-th level cylinders. Equivalently, $A$ is Markov if for some partition $\{I_i\}$ the subshift $\widetilde{\Sigma}$ is a {\it subshift of finite type}, that is a subshift defined as all the infinite words $\omega\in \{1,\ldots,m\}^\mathbb{N}$ that do not contain any word from some finite list of finite words.
\end{definition}

\subsection{Pressure and Markov sets}

Let $\varphi: [0,1]\to \mathbb{R}$ be a piecewise continuous potential, with the set of discontinuities contained in $\mathfrak{S}$. For the Markov systems we can define the pressure in the usual way:

\begin{equation} \label{eqn:presmarkov}
P(A, \varphi) = \lim_{n\to\infty} \frac 1n \log \sum_{C[\omega^n]\in D_n} \exp(\sup_{x\in C[\omega^n]} S_n\varphi(x)),
\end{equation}
compare \eqref{57}.
For the non-Markov systems the right hand side of this equation is still well-defined, but is considered too large for applications in dimension theory. Let us give a short explanation.

In the year 1973 Rufus Bowen \cite{bowen1973topological} gave the following definition of topological entropy: given a continuous map $f:X\to X$, where $X$ is any $f$-invariant set (not necessarily compact), let $X_n$ be the $n$-th level cylinders, then

\[
h_{\rm top}(f,X) = \inf\{s; \inf_{X\subset \bigcup E_i} \sum e^{-sn(E_i)} =0\},
\]
where the sum is taken over covers of $X$ with cylinders and for a cylinder $E$ $n(E)$ denotes its level. Geometrically, the Bowen's definition of topological entropy is similar  to the Hausdorff dimension as the usual definition \eqref{z77} is similar to the box counting dimension -- or more precisely, the Bowen's definition is the Hausdorff dimension and \eqref{z77} is the box counting dimension, both calculated in a special metric (so-called dynamical metric). Still, Bowen proved that for compact $X$ the two definitions are equal, while for noncompact the Bowen's definition gives in general a smaller number. For example, for a countable set $X$ the Bowen's entropy is always 0.

Our set $A$ is compact, so there is no disagreement about what $h_{\rm top}(f,A)$ is. However, even though the pressure is heuristically a very similar object to the topological entropy (in both cases we are just counting how many trajectories the system has, except in the case of pressure we count the trajectories with some weights, given by the potential), there is no analogue of Bowen's theorem. Thus, we can always define the pressure by formula \eqref{eqn:presmarkov}, but it is only an upper bound for the correct formula -- which we do not know.

Except for the Markov systems. For a Markov system each $n$-th level cylinder is {\it large}, in the sense that there exists $\delta>0$ such that for every $C[\omega^n]\in D_n$ we have

\[
\mathrm{diam} f^n(C[\omega^n]) > \delta.
\]

It is not necessarily so for non-Markov systems: some $n$-th level cylinders might be very tiny (they will be not only $n$-th level cylinders but also $n+1,\ldots,n+\ell$-th level cylinders, for some possibly large $\ell$). As the result, the sum on the right hand side of \eqref{eqn:presmarkov} overstates their importance (counting them as $n$-th level cylinders while they would be counted as $n+\ell$-th level cylinders by Bowen). Thus, Franz Hofbauer in \cite{hofbauer2010multifractal} gave a better definition of pressure:

\begin{equation} \label{eqn:presnonmarkov}
P(A, \varphi) = \sup_{B\subset A, B {\rm Markov}} P(B, \varphi),
\end{equation}
where $P(B, \varphi)$ is given by \eqref{eqn:presmarkov}. For Markov $A$ \eqref{eqn:presnonmarkov} gives the same value as \eqref{eqn:presmarkov}.
We note that it is still an open question whether the formula \eqref{eqn:presnonmarkov} can be strictly smaller than \eqref{eqn:presmarkov} for non-Markov $A$.

\subsection{Conformal measure and small cylinders}

We finish the section with two more important results of Franz Hofbauer. The first of them was obtained together with Mariusz Urba\'nski \cite{hofbauer1994fractal}. We will call a probabilistic measure $\mu$ defined on $A$ {\it conformal} for the potential $\varphi$ if for every $n$ and for every $C[\omega^n]\in D_n$ we have

\[
\mu(T C[\omega^n]) = \int_{C[\omega^n]} e^{P(A, \varphi)-\varphi} d\mu.
\]
As the partition is generating, this formula can be iterated:

\[
\mu(T^n C[\omega^n]) = \int_{C[\omega^n]} e^{nP(A, \varphi)-S_n\varphi} d\mu.
\]

\begin{theorem}[Hofbauer, Urba\'nski]
Let $A$ be topologically transitive, compact, $T$-invariant set of positive entropy. Then for every piecewise continuous potential $\varphi$ there exists a nonatomic conformal measure $\mu(A, \varphi)$ with support $A$.
\end{theorem}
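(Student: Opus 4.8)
The plan is to realise $\mu$ as a weak-$*$ accumulation point of transfer-operator eigenmeasures living on a Markov exhaustion of $A$. Using the definition \eqref{eqn:presnonmarkov} of the pressure, choose a sequence of compact, $T$-invariant, topologically transitive Markov subsets $B_j\subseteq A$ with $P(B_j,\varphi)\to P(A,\varphi)=:P$; since $h_{\rm top}(f,A)>0$ and $A$ is transitive, one can in addition arrange that each $B_j$ has positive entropy and that $\bigcup_j B_j$ is dense in $A$, and after the allowed subdivision of the partition $\{I_i\}$ one may assume the oscillation of $\varphi$ on each $I_i$ is as small as desired. Observe that on the doubled Cantor set $\varphi$ is genuinely continuous --- the doubling separates its jump points in $\mathfrak S$ --- so $\varphi$ restricts to a continuous potential on each $B_j$, which we treat as a transitive subshift of finite type.

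On each $B_j$ the dual transfer operator associated with $\varphi$ has a probability eigenmeasure $\mu_j$, with eigenvalue $e^{P(B_j,\varphi)}$ (for a subshift of finite type and a continuous potential the ordinary topological pressure coincides with the Markov pressure \eqref{eqn:presmarkov}), and such an eigenmeasure is precisely the conformal measure of $B_j$: for every cylinder $C$ of $B_j$ one has $\mu_j(TC)=\int_C e^{P(B_j,\varphi)-\varphi}\,d\mu_j$, hence, iterating, $\mu_j(T^nC)=\int_C e^{nP(B_j,\varphi)-S_n\varphi}\,d\mu_j$. Viewing $\mu_j\in\mathcal P(A)$ via $B_j\hookrightarrow A$ and using weak-$*$ compactness of $\mathcal P(A)$, pass to a convergent subsequence $\mu_j\to\mu$.

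It then remains to check the three properties of $\mu$. \emph{Conformality.} Fix a cylinder $C[\omega^n]$ of $A$; for all large $j$ its trace on $B_j$ is a union of $B_j$-cylinders and $TC[\omega^n]$ is a finite union of $A$-cylinders, so letting $j\to\infty$ in the $B_j$-conformality identities --- using $P(B_j,\varphi)\to P$ and the continuity of $e^{P-\varphi}$ --- gives $\mu(TC[\omega^n])=\int_{C[\omega^n]}e^{P-\varphi}\,d\mu$. The one subtlety is that weak-$*$ convergence only controls masses of sets whose boundary is $\mu$-null, so one needs $\mu(\partial C[\omega^n])=0$; this is supplied by the countability of $\mathfrak S_\infty$ together with the nonatomicity below, or by first establishing conformality along a cofinal family of cylinders with $\mu$-null boundary and then bootstrapping. \emph{Support.} Let $x_0\in A$ be a transitive point. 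Every $A$-cylinder $C$ contains some $f^k(x_0)$ in its interior, hence meets $B_j$ for all large $j$, and the Gibbs-type lower bound $\mu_j(C\cap B_j)\ge c_C>0$, which is uniform in $j$ because $P(B_j,\varphi)\le P$ and the distortion over a fixed-level cylinder is controlled, survives the limit; thus $\mu(C)>0$ for every $C$ and $\mathrm{supp}\,\mu=A$. \emph{Nonatomicity.} If $\mu(\{x\})=c>0$, then feeding $C=C_n(x)$ into conformality and letting $n\to\infty$ (monotone convergence of $TC_n(x)\downarrow\{fx\}$ and dominated convergence on the right) yields $\mu(\{fx\})=c\,e^{P-\varphi(x)}$, and inductively $\mu(\{f^nx\})=c\,e^{nP-S_n\varphi(x)}$. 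Hence the $\varphi$-averages along the forward orbit of $x$ would have to reach $P=P(A,\varphi)$: either the orbit is infinite, and then $\mu(\{f^nx\})\to0$ forces $\liminf_n\tfrac1nS_n\varphi(x)\ge P$, so an accumulation point of the empirical measures of $x$ is a zero-entropy equilibrium state for $\varphi$; or the orbit is eventually periodic, and its periodic part is itself such a measure. Excluding this --- equivalently, showing that the conformal measure cannot charge such an exceptional orbit --- is where both transitivity and positive entropy of $A$ are genuinely used, and forces $c=0$.

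The step I expect to be the real obstacle is the interplay between the weak-$*$ limit and the non-Markov cylinder combinatorics, together with the $\mu$-null-boundary issue: one has to guarantee that the limit measure does not concentrate on the ``seams'' $\mathfrak S_\infty$ and that the conformality identity holds for \emph{every} $A$-cylinder with the \emph{correct} exponent $P(A,\varphi)$, not with the larger naive pressure \eqref{eqn:presmarkov}; this is exactly why one cannot simply run a Patterson--Sullivan construction using all cylinders of $A$ at once. A technically cleaner route that dissolves much of this is to work on the canonical Markov extension (Hofbauer tower) of $(A,f)$: it is a topologically transitive countable-state Markov shift to which the thermodynamic formalism of countable Markov shifts applies, producing a conformal measure upstairs that one pushes down to $A$; positive entropy is precisely the hypothesis that makes the relevant component positive recurrent, so that the push-down has support exactly $A$ and carries no atoms.
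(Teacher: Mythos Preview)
The paper does not prove this theorem: it is stated as a cited result of Hofbauer and Urba\'nski \cite{hofbauer1994fractal} (this is a survey, and the theorem is quoted from the literature without argument). So there is no ``paper's own proof'' to compare your sketch against.

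That said, a brief comment on the sketch itself. Your closing paragraph already identifies the right route: the original Hofbauer--Urba\'nski argument works on the canonical Markov extension (the Hofbauer tower), builds the conformal measure there using the countable-state machinery, and pushes it down; positive entropy is exactly what gives the needed recurrence on the tower. Your primary plan --- taking weak-$*$ limits of conformal measures on a Markov exhaustion $B_j\nearrow A$ --- is plausible in spirit but, as you correctly flag, the boundary/seam issue and the identification of the limiting exponent with the Hofbauer pressure \eqref{eqn:presnonmarkov} rather than the naive pressure \eqref{eqn:presmarkov} are genuine technical obstacles that your sketch does not actually resolve. In particular, the claimed ``uniform in $j$'' Gibbs-type lower bound on $\mu_j(C\cap B_j)$ is not automatic for general continuous $\varphi$ and general Markov $B_j$, and without it the support argument does not go through. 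If you want a self-contained proof, commit to the tower route from the start rather than presenting it as an afterthought.
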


The second result of Hofbauer, from \cite{hofbauer2010multifractal}, provides a way of estimating the set of points $x\in A$ such that for every $n$ the cylinder $C_n(x)$ is not large. Denote

\[
N_\rho(A, \mu) = \{x\in A; \limsup_{n\to\infty} \mu(T^n C_n(x))\leq \rho\}.
\]

Denote also by $D(\alpha)$ the set of points $x\in A$ with Lyapunov exponent $\alpha$. We remind that $\varphi_1(x)$ denotes the variation of potential $\varphi$ in first level cylinder containing $x$.

\begin{lemma}[Hofbauer]
For every $\alpha> \sup_x (\log |F'|)_1(x)$,
\[
\lim_{\rho\to 0} \dim_H (N_\rho \cap D(\alpha)) =0.
\]
\end{lemma}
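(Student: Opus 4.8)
The plan is to control the set $N_\rho(A,\mu)\cap D(\alpha)$ by a covering argument using cylinders, exploiting the fact that a point $x$ in this set lives, for all large $n$, in cylinders $C_n(x)$ whose images $T^nC_n(x)$ have conformal measure at most roughly $\rho$. First I would fix the conformal measure $\mu=\mu(A,\varphi_{s})$ associated with a potential of the form $\varphi_s = -s\log|F'|$ (for a parameter $s$ to be determined), so that the conformal relation $\mu(T^nC_n(x)) = \int_{C_n(x)} e^{nP(A,\varphi_s)-S_n\varphi_s}\,d\mu$ converts the smallness condition $\limsup_n \mu(T^nC_n(x))\le\rho$ into an estimate on $S_n(-s\log|F'|)(x)$, i.e.\ on the Birkhoff averages of $\log|F'|$ along $x$. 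On $D(\alpha)$ those averages are pinned to $\alpha$ up to the first-level variation, so I expect to get, for $x\in N_\rho\cap D(\alpha)$ and all large $n$,
\[
|C_n(x)| \asymp |(F^n)'(x)|^{-1} \le C\,\rho^{1/s}\,e^{-n(\alpha-\delta)}\,e^{-nP(A,\varphi_s)/s}
\]
for some error $\delta$ controlled by $\sup_x(\log|F'|)_1(x)$ and with $C$ depending on the bounded-distortion constants.

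Next I would turn this into a Hausdorff-dimension bound. The cylinders $\{C_n(x): x\in N_\rho\cap D(\alpha)\}$ of a fixed generation $n$ form a cover of the set, and there are at most $\#D_n \le e^{n(h_{\rm top}(f,A)+o(1))}$ of them; but more efficiently one can cover $N_\rho\cap D(\alpha)$ by the cylinders of \emph{varying} generations at which the smallness first becomes visible, and sum $\sum |C_n(x)|^t$. Plugging in the diameter bound above, the exponential-in-$n$ factors are $e^{-n(t(\alpha-\delta)+P(A,\varphi_s))}$ or, after optimizing over the auxiliary potential, one sees the sum converges once $t$ exceeds a threshold of the form $\frac{h(\mu_\alpha)}{\alpha}$ plus a term that is a multiple of $\rho^{1/s}$ — crucially, the \emph{only} surviving $\rho$-dependence is through the prefactor $\rho^{1/s}$, which $\to 0$ as $\rho\to 0$. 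Hence $\dim_H(N_\rho\cap D(\alpha))\to 0$; here the hypothesis $\alpha>\sup_x(\log|F'|)_1(x)$ is exactly what guarantees the first-level-variation error $\delta$ can be taken strictly smaller than $\alpha$, so that the exponent $t(\alpha-\delta)$ is genuinely positive and the geometric series converges.

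The role of the condition $\alpha>\sup_x(\log|F'|)_1(x)$ deserves to be isolated as the conceptual heart: because the potential $\log|F'|$ has variation at most $\sup_x(\log|F'|)_1(x)$ on each first-level cylinder (and we are free, by the remark about subdividing the partition, to make this as small as we like relative to any fixed $\alpha$ above the infimum of Lyapunov exponents — but here $\alpha$ is prescribed, so instead we just need it to lie above that variation), the Birkhoff sum $S_n(\log|F'|)$ along a trajectory staying in a sequence of cylinders is comparable to $n\alpha$ with a controlled multiplicative distortion; this is what lets the conformal-measure estimate translate cleanly into a diameter estimate without the partition-dependent errors swamping the exponential decay.

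\textbf{Main obstacle.} The technically delicate step will be the passage from the pointwise condition $\limsup_n \mu(T^nC_n(x))\le\rho$ to a \emph{uniform-in-$x$} covering estimate: a priori the generation $n$ at which $\mu(T^nC_n(x))$ drops below $\rho+\epsilon$ depends on $x$, so one must stratify $N_\rho\cap D(\alpha)$ according to this generation and sum the contributions of all strata, checking that the total still has the desired vanishing dimension. Handling the non-Markov feature — that some cylinders are "tiny" and get recounted at many generations — is precisely what Hofbauer's machinery is designed for, and I would lean on the bounded distortion property together with the generating property of the partition to keep the combinatorics of overlapping cylinders under control; I do not expect a clean closed-form answer for $\dim_H(N_\rho\cap D(\alpha))$, only the stated limit.
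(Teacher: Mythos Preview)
The paper does not actually prove this lemma: it is stated with attribution to Hofbauer and cited from \cite{hofbauer2010multifractal}, with no argument given in the text. So there is no in-paper proof to compare your proposal against.

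That said, your outline has a genuine gap at the step where you pass from the conformal relation to a diameter bound. The conformal identity
\[
\mu(T^nC_n(x))=\int_{C_n(x)} e^{nP(A,\varphi_s)-S_n\varphi_s}\,d\mu
\]
with $\varphi_s=-s\log|F'|$ tells you, after bounded distortion, that
\[
\mu(C_n(x))\;\approx\;\mu(T^nC_n(x))\cdot e^{-nP(A,\varphi_s)}\cdot |(F^n)'(x)|^{-s},
\]
so the condition $\mu(T^nC_n(x))\lesssim\rho$ controls the \emph{$\mu$-measure} of $C_n(x)$, not its diameter. The diameter is governed purely by $|(F^n)'(x)|^{-1}\approx e^{-n\alpha}$ for $x\in D(\alpha)$ and carries no $\rho$-factor. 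Your displayed inequality $|C_n(x)|\le C\rho^{1/s}e^{-n(\alpha-\delta)}e^{-nP/s}$ therefore cannot be right, and once that factor is removed the rest of the covering computation collapses: summing $|C_n(x)|^t$ over all admissible $n$-cylinders gives a threshold of the form $h_{\rm top}/\alpha$ (or $h(\mu_\alpha)/\alpha$ in your notation), which is a positive number independent of $\rho$.

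What is missing is the \emph{counting} step: the content of the lemma is that the condition ``$\mu(T^nC_n(x))$ eventually stays below $\rho$'' singles out very few cylinders at each level, and their number grows subexponentially slowly as $\rho\to 0$. An upper bound on $\mu(C_n(x))$ does not by itself bound the number of such cylinders (you would need a lower bound for that). In Hofbauer's argument this counting is handled through the Markov extension (the Hofbauer tower): points in $N_\rho$ are exactly those whose lifts never reach levels of the tower with base of $\mu$-measure larger than $\rho$, and one controls the topological entropy of the low part of the tower directly. Your proposal does not touch this mechanism, and without it there is no way to force the dimension to $0$ in the limit $\rho\to 0$.
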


We note that $\sup_x (\log |F'|)_1(x)$ can be arbitrarily decreased by considering subpartitions of $\{I_i\}$.

\section{The dimension of Barnsley's repellers}
First we recall the basic definitions.
\subsection{The basic definitions}
First we recall the definition of Barnsley's skew product maps: Given
$\left\{I_i\right\}_{i=1}^{m}$ which is a partition of $[0,1]$. Let $D_i:=I_i\times \mathbb{R}$. For $(x,y)\in D_i$ we defined
$F_i(x,y):=(f_i(x),g_i(x,y))$, where $f_i:I_i\to J_i \subset [0,1]$ onto, and
\begin{equation}\label{y95}
 f_i(x):=\gamma_ix+v_i,\  g_i(x,y)=a_ix+\lambda_iy+t_i,\
 |\lambda_i|,|\gamma_i|>1, \quad t_i,v_i\in\mathbb{R}.
 \end{equation}
Also recall that we define $f(x):=f_i(x)$ if $x\in I_i$.
The set of admissible words is defined as
	\begin{equation}\label{eq:symbshift}
X:=	\mathrm{cl}\left\{(i_1,i_2,\dots)\in\Sigma:\exists x\in I\text{ such that }\forall n\geq0,\ f^n(x)\in I_{i_n}^o\right\},
	\end{equation}
where $\mathrm{cl}(A)$ is the closure of the set $A \subset \Sigma:=\left\{1, \dots ,m\right\}^{\mathbb{N}}$ in the usual topology on $\Sigma$.
\begin{definition}\label{y94}
  We say that $f$ is \textit{Markov} if $f(\overline{I_i})$ is equal to a finite union of elements in $\{\overline{I_i}\}_{i=1}^m$ for every $i=1,\ldots,m$.
\end{definition}

\subsection{Diagonal and essentially non-diagonal system}

Since the maps $F_i$ are affine the derivatives $DF_i$ are constant lower triangular matrices
$$
DF_i:=\left(
        \begin{array}{cc}
          \gamma_i & 0 \\
          a_i & \lambda_i \\
        \end{array}
      \right).
$$
However, it is very important if the derivative matrices are diagonal or essentially non diagonal along the dynamics since the proofs that work for the essentially non-diagonal case do not work for the diagonal ones and we need different assumptions in these different cases.

\begin{definition}\label{y92}
We say that
\begin{description}
  \item[(a)] $F$ is diagonal
  if all the matrices $DF_i$ are diagonal.
  \item[(b)] $F$ is essentially diagonal if the system of matrices
$\left\{DF_i\right\}_{i=1}^{m}$, simultaneously diagonizable.
 This holds if
\begin{equation}\label{y93}
\frac{\gamma_i-\lambda_i,}{a_i}=
\frac{\gamma_j-\lambda_j}{a_j}, \quad \forall i,j\in\left\{1, \dots ,m\right\}.
\end{equation}
  \item[(c)] $F$ is essentially non-diagonal along the dynamics
if there are admissible  words $\pmb{\omega},\pmb{\tau},\in X$ and another word $\pmb{\eta}$ such that $\pmb{\omega}\pmb{\eta}\pmb{\tau}\in X$ such that
\begin{enumerate}
  \item both  $f_{\pmb{\omega}}$ and $f_{\pmb{\tau}}$ have fixed points
  \item $\left\{DF_{\pmb{\omega}},DF_{\pmb{\tau}}\right\}$ are not simultaneously diagonizable. That is for
$$
DF_{\pmb{\omega}}=\left(
                    \begin{array}{cc}
                      \gamma_{\pmb{\omega}} & 0 \\
                      a_{\pmb{\omega}} & \lambda_{\pmb{\omega}} \\
                    \end{array}
                  \right)
\quad \mbox{ and } \quad
DF_{\pmb{\tau}}=\left(
                    \begin{array}{cc}
                      \gamma_{\pmb{\tau}} & 0 \\
                      a_{\pmb{\tau}} & \lambda_{\pmb{\tau}} \\
                    \end{array}
                  \right)
$$
we have
$$
\frac{\gamma_{\pmb{\omega}}-\lambda_{\pmb{\omega}}}{a_{\pmb{\omega}}}
\ne
\frac{\gamma_{\pmb{\tau}}-\lambda_{\pmb{\tau}}}{a_{\pmb{\tau}}}.
$$
\end{enumerate}

\end{description}
The reason for this restrictive definition in (c) is that during the proof we approximate by Markov sub-systems and we need to guarantee that even the approximating Markov sub-system remains essentially non-diagonal.
\end{definition}

\subsection{Markov pressure and Hofbauer Pressure}\label{y88}
Using the notation of \eqref{z48},
we introduce potential:
\begin{equation}\label{eq:potential}
\varphi^s(x)=\begin{cases}
-s\log| \lambda_i|& \text{if }0\leq s\leq 1, \\
-\left(\log|\lambda_i|+(s-1)\log|\gamma_i|\right) & \text{if }1<s\leq 2.
\end{cases}
\end{equation}
\begin{definition}[$P(s,B)$]\label{y83}
Let $s>0$ and $B \subset [0,1]$ be a Markov subset.
Recall that in
  \eqref{eqn:presmarkov} we defined the pressure $P(B,\varphi)$ for Markov subset $B \subset [0,1]$ and potential $\varphi$. Using this definition we can define
\begin{equation}\label{y84}
  P(s,B):=P(B,\varphi^s).
\end{equation}
\end{definition}

The following lemma helps to get better understanding:
\begin{lemma}\label{y85}
  Assume that $B \subset [0,1]$ is Markov of type-1 set. That is for every $i,j\in\left\{1, \dots ,m\right\}$
either $I_j\cap B \subset f(I_i\cap B)$ or $(I_j\cap B)\cap f(I_i\cap B) =\emptyset $. Then
	$$
	A_{i,j}^{(s)}=\begin{cases}
 (1/\lambda_i) \cdot (1/\gamma_i)^{s-1}
 & \text{if }I_j\cap B\subseteq f(I_i\cap B)\\
	0 & \text{otherwise.}\end{cases}
	$$
Then $P(s,B)=\log\rho(A^{(s)})$, where $\rho(A)$ denotes the spectral radius of $A$.
\end{lemma}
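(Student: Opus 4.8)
The statement is essentially a reformulation of the Markov pressure formula \eqref{eqn:presmarkov} as the logarithm of the spectral radius of a transfer matrix, so the plan is to unwind the definition and recognize the sum over $n$-th level cylinders as the sum of entries of a power of $A^{(s)}$. First I would set up the combinatorics: because $B$ is Markov of type-1, the set of admissible $n$-th level cylinders is exactly the set of words $\omega^n=(\omega_1,\dots,\omega_n)$ with $A^{(s)}_{\omega_k,\omega_{k+1}}\neq 0$ for all $k$, i.e. the admissible paths in the graph with adjacency pattern given by the support of $A^{(s)}$. This is where the hypothesis "$I_j\cap B\subseteq f(I_i\cap B)$ or disjoint'' is used: it guarantees that admissibility is governed purely by consecutive pairs (a genuine subshift of finite type) rather than by longer-range constraints.

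Next I would compute $S_n\varphi^s$ along a cylinder. Since the maps $f_i$ and $g_i$ are affine, the derivatives are constant on each $I_i$, so $\varphi^s$ is \emph{constant} on each first-level cylinder; hence for $x\in C[\omega^n]$ we get $S_n\varphi^s(x)=\sum_{k=1}^n \varphi^s\!\restriction_{I_{\omega_k}}$, with no supremum/variation issue at all. Exponentiating, $\exp(S_n\varphi^s(x))=\prod_{k=1}^n w_{\omega_k}$ where $w_i=|\lambda_i|^{-1}$ for $s\le1$ and $w_i=|\lambda_i|^{-1}|\gamma_i|^{-(s-1)}$ for $1<s\le2$ — exactly the nonzero value of $A^{(s)}_{i,j}$. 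Therefore, choosing to write the weight on the \emph{source} index of each transition, one identifies
\[
\sum_{C[\omega^n]\in D_n}\exp\Bigl(\sup_{x\in C[\omega^n]}S_n\varphi^s(x)\Bigr)
=\sum_{\omega_1,\dots,\omega_n}\prod_{k=1}^{n-1}A^{(s)}_{\omega_k,\omega_{k+1}}\cdot(\text{boundary factor}),
\]
which up to the bounded factors coming from the first and last index equals $\mathbf{1}^{\mathsf T}(A^{(s)})^{n-1}\mathbf{1}$, the sum of all entries of $(A^{(s)})^{n-1}$.

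Finally I would take $\tfrac1n\log$ and pass to the limit. By Gelfand's formula (or directly: the entries of $(A^{(s)})^{n}$ grow like $\rho(A^{(s)})^{n}$ up to subexponential factors, since $A^{(s)}$ is a nonnegative matrix), $\lim_n \tfrac1n\log \mathbf{1}^{\mathsf T}(A^{(s)})^{n-1}\mathbf{1}=\log\rho(A^{(s)})$; the bounded boundary factors contribute $0$ in the limit, and finiteness of $B$ makes the matrix finite-dimensional so there are no convergence subtleties. This yields $P(s,B)=P(B,\varphi^s)=\log\rho(A^{(s)})$. The only genuinely delicate point is the bookkeeping in the first paragraph — making sure that type-1 Markovness really does reduce the cylinder structure to the paths of a $0$–$1$ pattern matrix and that the weight is correctly attributed to one endpoint of each edge so that a product over $n$ sites becomes a product of $n-1$ matrix entries; once that is pinned down, the rest is the standard Perron–Frobenius-flavoured limit computation.
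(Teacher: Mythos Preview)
Your approach is correct and is the standard one. The paper itself does not supply a proof of this lemma at all --- it is stated there only as an aid to intuition, with the remark afterwards that higher-type subshifts can be recoded to type~1. So there is nothing to compare against; your argument is exactly the computation one would expect: the affinity of the $f_i$ and $g_i$ makes $\varphi^s$ constant on first-level cylinders, the type-1 Markov hypothesis reduces admissibility to a nearest-neighbour constraint, and then the Birkhoff sum exponentiates to a product of row-dependent weights, so that the level-$n$ partition sum equals $\mathbf{1}^{\mathsf T}(A^{(s)})^{n-1}\mathbf w$ for a bounded positive vector $\mathbf w$, whence Gelfand's formula gives the spectral radius.

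Two small cosmetic points. First, for $0\le s\le 1$ the weight is $w_i=|\lambda_i|^{-s}$, not $|\lambda_i|^{-1}$; this does not affect the argument since all that matters is that $w_i$ depends only on the symbol $i$, but you should state it correctly (the matrix in the lemma is visibly written for the range $1<s\le 2$). Second, ``finiteness of $B$'' is not what makes the matrix finite-dimensional --- $B$ is typically uncountable --- rather it is that the alphabet $\{1,\dots,m\}$ is finite. Neither point touches the substance of the proof.
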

We remark that every subshifts of type-$n$ can be corresponded to a type-1 subshift by defining a new alphabet, and subdividing the monotonicity intervals into smaller intervals.
\begin{definition}[$P_{\mathrm{Mar}}(s),P_{\mathrm{Hof}}(s)$]
 Now we define the functions $s\mapsto P_{\mathrm{Mar}}(s)$ and
$s\mapsto P_{\mathrm{Hof}}(s)$ as follows:
\begin{description}
  \item[(a)] If $f$ is Markov then we write
$P_{\mathrm{Mar}}(s):=P(s,[0,1])$
  \item[(b)] If $f$ is none Markov then we write
\begin{equation}\label{y82}
 P_{\mathrm{Hof}}(s):=
\sup\limits_{B \subset [0,1],\ B\  \mathrm{ Markov}}
  P(s,B).
\end{equation}
\end{description}

\end{definition}

\subsection{The main results}\label{y87}
\begin{theorem}\label{y89}
	 Suppose that
\begin{description}
  \item[(a)] $F$ is essentially diagonal,
  \item[(b)] $\gamma_i>\lambda_i$ for every $i=1,\ldots, m$,
  \item[(c)] The self-similar IFS $\{g_i^{-1}(y)=\frac{y-t_i}{\lambda_i}\}_{i=1}^M$ satisfies HESC (see Condition \ref{y96})
\end{description}
 then
	$$
\dim_H\Lambda=\dim_B\Lambda=\sup_{\mu\in\mathcal{M}_{\rm{erg}}(\Lambda)}D(\mu)=s_0,
	$$
	where $s_0$ is the unique number such that
\begin{itemize}
  \item $P_{\mathrm{Mar}}(s_0)=0$ if $f$ is Markov, otherwise
  \item  $P_{\mathrm{Hof}}(s_0)=0$.
\end{itemize}

.
\end{theorem}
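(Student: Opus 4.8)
\emph{Strategy.} I will establish the three inequalities $\sup_\mu D(\mu)\le\dim_{\rm H}\Lambda$, $\overline{\dim}_{\rm B}\Lambda\le s_0$ and $\sup_\mu D(\mu)=s_0$; since always $\dim_{\rm H}\le\overline{\dim}_{\rm B}$, these force all four quantities to equal $s_0$ (and the box dimension to exist). First, as $F$ is essentially diagonal, part (b) of Definition~\ref{y92} says the matrices $DF_i$ share a common eigenbasis, so after conjugating $F$ by a (possibly skew) linear map preserving the skew‑product structure we may assume each $DF_i$ is diagonal, i.e. $a_i=0$ and $g_i(x,y)=\lambda_i y+t_i$; this conjugacy is bi‑Lipschitz, fixes $f$, the exponents $\chi_x,\chi_y$ (hence $D(\mu)$) and both pressure functions, and condition (c) now reads: the fibre self‑similar IFS $\mathcal{G}=\{y\mapsto(y-t_i)/\lambda_i\}_{i=1}^m$ satisfies HESC. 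The functional equation $G\circ f_i=\lambda_i G+t_i$ on $I_i$ then identifies $\Lambda$ with $\{(\pi(\omega),\pi_{\mathcal{G}}(\omega)):\omega\in X\}$, where $\pi$ codes the base map and $\pi_{\mathcal{G}}$ is the natural projection of $\mathcal{G}$; in particular, for a word $\omega$ of length $n$ the cylinder $C[\omega]\subset\Lambda$ is the image of $\Lambda$ under an affine contraction with linear part $\mathrm{diag}(\gamma_{\omega}^{-1},\lambda_{\omega}^{-1})$ (with $\gamma_\omega=\gamma_{\omega_1}\cdots\gamma_{\omega_n}$, etc.), where $\gamma_\omega^{-1}<\lambda_\omega^{-1}$ by (b), and the $y$‑marginal of any $F$‑invariant measure on $\Lambda$ is, up to rescaling, coded through $\mathcal{G}$. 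Finally, (b) forces $\chi_x(\mu)>\chi_y(\mu)>0$ for every such $\mu$, so we are always in the regime of Definition~\ref{z35}.

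\emph{Upper bound.} Each level‑$n$ cylinder $C[\omega]$ lies in a rectangle of size $\gamma_\omega^{-1}\times(\mathrm{const}\cdot\lambda_\omega^{-1})$ (pull the bounded oscillation of $G$ back along $\omega$ via the functional equation); covering it either by one set of diameter $\lambda_\omega^{-1}$ or by $\asymp\gamma_\omega/\lambda_\omega$ squares of side $\gamma_\omega^{-1}$ produces, for the relevant range of $s$, the covering sum $\sum_{|\omega|=n}\exp S_n\varphi^s(x_\omega)=\exp\!\big(nP(\varphi^s)+o(n)\big)$ with $\varphi^s$ the potential \eqref{eq:potential}. When $f$ is Markov the cylinders of a given level behave like Markov ones, this becomes an honest $\delta$‑scale box count, and $\overline{\dim}_{\rm B}\Lambda\le s_0=s_0^{\rm Mar}$ follows. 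When $f$ is not Markov the naive cover overweights the ``small'' cylinders and yields only the bound governed by the (larger) Markov pressure; to recover the sharp bound one passes to a Markov (Hofbauer) extension of $(f,A)$, on which the system is countably Markov and the pressure of $\varphi^s$ equals $P_{\rm Hof}(s)$, and covers $\Lambda$ by projections of cylinders of the extension \emph{stopped} at the level at which their image first reaches diameter $\approx\delta$, using Hofbauer's small‑cylinder Lemma to bound the cardinality of this truncated cover (points whose cylinders never reach scale $\delta$ contribute negligibly). This is the main technical hurdle of the proof.

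\emph{Lower bound.} Fix an ergodic $\mu\in\mathcal{M}_{\rm erg}(\Lambda)$ and put $h=h(\mu)$, $\chi_x=\chi_x(\mu)>\chi_y=\chi_y(\mu)>0$. Its $x$‑projection $\mu_x$ is $f$‑invariant, ergodic, with entropy $h$, so $\dim_{\rm H}\mu_x=h/\chi_x$ by Hofbauer--Raith \eqref{z44}; its $y$‑projection $\mu_y$ is the $\pi_{\mathcal{G}}$‑image of an ergodic $\sigma$‑invariant measure on $X$, and since $\mathcal{G}$ satisfies HESC, the version of Hochman's Theorem~\ref{k79} for invariant measures gives that $\mu_y$ is exact dimensional with $\dim_{\rm H}\mu_y=\min\{1,h/\chi_y\}$. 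Now take a $\mu$‑generic $p=\Pi(\omega)$, a small $r>0$, and let $n=n(r)$ be the first level with $\gamma_{\omega|_n}^{-1}\le r$. Because $C_n(p)$ is an affinely rescaled copy of $\Lambda$ with vertical scale $\lambda_{\omega|_n}^{-1}\gg r$, the ball $B(p,r)$ meets $\Lambda$ inside $O(1)$ such cylinders and only in vertical windows of height $r$; comparing with the cylinder structure (rigorous, in the non‑Markov case, again through the Markov extension) gives $\mu(B(p,r))\asymp\mu(C_n(p))\cdot\mu_y(\text{interval of length}\approx r\lambda_{\omega|_n})$. Taking logarithms and using Shannon--McMillan--Breiman together with $\log\gamma_{\omega|_n}\sim n\chi_x$ and $\log\lambda_{\omega|_n}\sim n\chi_y$ yields
\[
\underline{\dim}(\mu,p)=\frac{h+\dim_{\rm H}(\mu_y)\cdot(\chi_x-\chi_y)}{\chi_x},
\]
which a one‑line computation identifies with $h/\chi_y$ when $h\le\chi_y$, with $1+(h-\chi_y)/\chi_x$ when $\chi_y\le h\le\chi_x+\chi_y$, and with the ambient value $2$ when $h\ge\chi_x+\chi_y$ -- that is, with $\min\{2,D(\mu)\}$. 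Hence $\dim_{\rm H}\Lambda\ge\min\{2,D(\mu)\}$, and taking the supremum over $\mu$ gives $\dim_{\rm H}\Lambda\ge\sup_\mu\min\{2,D(\mu)\}$.

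\emph{Variational identity and conclusion.} Since $\int\varphi^s\,d\mu=-s\chi_y(\mu)$ for $s\le1$ and $=-\chi_y(\mu)-(s-1)\chi_x(\mu)$ for $1\le s\le2$, the defining equation $P_{\rm Mar}(s_0)=0$ (resp. $P_{\rm Hof}(s_0)=0$) reads, by the variational principle for the topological (resp. Hofbauer) pressure -- here one uses that every ergodic measure on $A$ is approximated in entropy and in the weights $\mu(I_i)$ by ones carried by Markov subsets -- as $\sup_\mu\big(h(\mu)+\int\varphi^{s_0}\,d\mu\big)=0$; rearranging this with the same case analysis as above gives $\sup_\mu\min\{2,D(\mu)\}=\sup_\mu D(\mu)=s_0$. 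Combining, $s_0\le\dim_{\rm H}\Lambda\le\overline{\dim}_{\rm B}\Lambda\le s_0$, so everything equals $s_0$. Besides the non‑Markov upper bound, the second delicate ingredient is the appeal to Hochman's theorem for the non‑Bernoulli fibre measures $\mu_y$: under the exponential‑separation hypothesis (c) this is exactly what rules out a drop of $\dim_{\rm H}\mu_y$ below $h/\chi_y$, hence a drop of $\dim_{\rm H}\Lambda$ below $s_0$.
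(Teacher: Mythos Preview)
The paper does not actually prove this theorem: Section~\ref{y87} merely \emph{states} Theorems~\ref{y89} and~\ref{y91}, and the abstract explicitly announces that ``in the continuation there will be more proofs''. So there is no proof in the present paper to compare your sketch against; the paper only assembles the ingredients (Hofbauer pressure, Hochman's HESC theorem, Lyapunov dimension) that such a proof would use.

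Your outline is consistent with those ingredients and follows the expected architecture (upper bound via covers governed by the pressure of $\varphi^s$, lower bound via local dimensions of ergodic measures, a variational principle to identify $\sup_\mu D(\mu)$ with the pressure root). Two steps, however, are genuinely incomplete as written. First, the asserted factorisation $\mu(B(p,r))\asymp\mu(C_n(p))\cdot\mu_y(\text{interval of length }r\lambda_{\omega|_n})$ is not a priori valid: $\mu$ is the push-forward of a shift-invariant measure under $(\pi,\pi_{\mathcal G})$, not a product, and the vertical mass in a horizontal slab is governed by \emph{conditional} measures, not by the global marginal $\mu_y$. The correct route is a Ledrappier--Young type decomposition (or the argument of B\'ar\'any--K\"aenm\"aki that the paper cites), not a direct product estimate. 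Second, Theorem~\ref{k79} as stated in this paper covers only Bernoulli push-forwards $\nu_{\mathcal S,\mathbf p}$; your lower bound needs $\dim_{\rm H}\mu_y=\min\{1,h/\chi_y\}$ for an \emph{arbitrary} ergodic push-forward, which is a stronger statement (true under HESC, but requiring Hochman's entropy-growth argument in its general form, not the version quoted here). You flag both points as ``delicate'' but do not resolve either; they are precisely the content that Part~II is meant to supply.
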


\begin{theorem}\label{y91}
	Assume that $F$ is essentially non-diagonal and $f$ is a topologically transitive. If $\gamma_i>\lambda_i$ for every $i=1,\ldots, m$ then
	$$
	\dim_H\Lambda=\dim_B\Lambda=\sup_{\mu\in\mathcal{M}_{\rm{erg}}(\Lambda)}D(\mu)=s_0,
	$$
where $s_0$ is the unique number such that
\begin{itemize}
  \item $P_{\mathrm{Mar}}(s_0)=0$ if $f$ is Markov, otherwise
  \item  $P_{\mathrm{Hof}}(s_0)=0$.
\end{itemize}
\end{theorem}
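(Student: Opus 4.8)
The plan is to clamp the four quantities against each other. Using the automatic bound $\dim_H\Lambda\le\dim_B\Lambda$, it suffices to prove
\[
s_0 \ \le\ \sup_{\mu\in\mathcal{M}_{\mathrm{erg}}(\Lambda)} D(\mu) \ \le\ \dim_H\Lambda \ \le\ \dim_B\Lambda \ \le\ s_0,
\]
after which all four coincide, so the proof breaks into three pieces: \emph{(I)} $s_0\le\sup_\mu D(\mu)$; \emph{(II)} the box bound $\dim_B\Lambda\le s_0$; \emph{(III)} the measure lower bound $\dim_H\Pi_*\mu\ge D(\mu)$ for every ergodic $\mu$ (which gives $\sup_\mu D(\mu)\le\dim_H\Lambda$). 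Part (I) is soft thermodynamic formalism on the subshift $X$. For a Markov subset $B$ the pressure $P(s,B)=P(B,\varphi^s)$ of Definition \ref{y83} is continuous and strictly decreasing in $s$ with a unique zero $s_B$, and the equilibrium state $\mu_B$ for $\varphi^{s_B}$ satisfies $h(\mu_B)+\int\varphi^{s_B}\,d\mu_B=P(s_B,B)=0$. Comparing the piecewise-linear shape of $\varphi^s$ in \eqref{eq:potential} with the Lyapunov dimension in \eqref{y97} — and using \eqref{z36} to express $\int\varphi^s\,d\mu$ in terms of $\chi_x(\mu)$ and $\chi_y(\mu)$ — one checks, regime by regime, that $D(\mu)\ge s$ is equivalent to $h(\mu)+\int\varphi^s\,d\mu\ge 0$; hence $D(\mu_B)\ge s_B$. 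Since by \eqref{y82} we have $s_0=\sup_B s_B$ (with $s_0=s_{[0,1]}$ in the Markov case), this yields $s_0\le\sup_B D(\mu_B)\le\sup_\mu D(\mu)$.

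Part (II) is a covering argument. Since $\gamma_i>\lambda_i$, the inverse-branch rectangles $F_{\pmb{\omega}}^{-1}(Q)$ (with $Q$ a fixed large square) are narrow in the $x$-direction and long in the $y$-direction — equivalently $G$ is $\tfrac{\log\lambda}{\log\gamma}$-Hölder — so one builds approximate $\delta$-squares by cutting each rectangle whose $x$-width is $\approx\delta$ into $\approx(\text{its }y\text{-height})/\delta$ pieces. Counting these, and comparing the count with $\sum_{\pmb{\omega}}\phi^{s}(DF_{\pmb{\omega}})$ over words stopped when their $x$-width first reaches $\delta$, gives $\overline{\dim}_B\Lambda\le s$ whenever the pertinent pressure is $\le 0$ at $s$; by Lemma \ref{y85} the critical exponent is $s_0$. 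The one non-routine point is that, in the non-Markov case, stopping words at a \emph{geometric} scale rather than at a fixed combinatorial level automatically prolongs the words that would otherwise produce tiny cylinders, so the exponential rate governing $N_\delta$ is the one coming from the Hofbauer pressure $P_{\mathrm{Hof}}$ of \eqref{y82} and not the possibly larger naive level-$n$ pressure; to make this rigorous one invokes Hofbauer's Markov-extension presentation of $f$ and his lemma bounding $\dim_H$ of the set of points whose cylinders are never large \cite{hofbauer2010multifractal}.

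Part (III) is the heart of the matter. Fix an ergodic $\mu$; by Remark \ref{z37} we may assume $\chi_x(\mu)>\chi_y(\mu)>0$, so that $D(\mu)$ is given by \eqref{y97}. The projected measure $\bar\mu=\mathrm{proj}_*\mu$ is $f$-ergodic with exponent $\chi_x(\mu)$, and since the branch index is a function of $x$ the symbolic codings of $F$ and of $f$ coincide, so $h(\bar\mu)=h(\mu)$ and \eqref{z44} gives $\dim_H\bar\mu=h(\mu)/\chi_x(\mu)$. It remains to add the fibre contribution: disintegrating $\Pi_*\mu$ along the strong stable direction of $F^{-1}$ (the $y$-axis, the common eigendirection of all the $DF_i$), the dimension of the conditional measures is controlled by the Furstenberg-type measure of the cocycle $\{DF_i\}$ along $\mu$, and \emph{essential non-diagonality along the dynamics} (Definition \ref{y92}(c)) — phrased via a word $\pmb{\omega}\pmb{\eta}\pmb{\tau}$ with $f_{\pmb{\omega}},f_{\pmb{\tau}}$ having fixed points precisely so that it survives restriction to the Markov subsystem one must work on — keeps this conditional measure from degenerating, forcing the fibre dimension to attain the value permitted by $\chi_y(\mu)$. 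Running the exact-dimensionality / Ledrappier--Young scheme behind Theorems \ref{z00} and \ref{z01} (cf. \cite{BHR}, \cite{barany2017ledrappier}), in its graph-directed form applied to a near-optimal essentially non-diagonal Markov subsystem, then yields $\dim_H\Pi_*\mu\ge D(\mu)$, closing the chain.

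The crux is Part (III): the B\'ar\'any--Hochman--Rapaport machinery (Furstenberg measures, dimension of projections of self-affine measures) was developed for self-affine \emph{iterated function systems}, i.e.\ full shifts, whereas here it must be run over a repeller whose symbolic space is only a topologically transitive subshift and whose linear cocycle is non-diagonal merely along one long admissible word. This forces an exhaustion of $\Lambda$ by Markov subsystems that are simultaneously rich enough in entropy and Lyapunov exponents to recover $D(\mu)$ (respectively $s_0$) and still essentially non-diagonal, together with a proof that the resulting per-subsystem lower bounds pass to the limit — which is exactly why Definition \ref{y92}(c) is worded the way it is. Part (II) is easier but still requires the Hofbauer pressure to avoid overcounting the tiny cylinders, and Part (I) is essentially bookkeeping.
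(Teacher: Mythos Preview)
The paper does not contain a proof of Theorem \ref{y91}. This is Part I of a two-paper series; the abstract states explicitly that ``in the continuation there will be more proofs,'' and Section \ref{y87} simply lists Theorems \ref{y89} and \ref{y91} as the main results with no argument following either statement. So there is no in-paper proof to compare your proposal against.

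That said, your outline matches the architecture the paper is visibly setting up: the Hofbauer pressure of Section \ref{y88} is introduced precisely to supply the zero $s_0$, the Markov-approximation machinery of \eqref{eqn:presnonmarkov} is what your Part (I) and Part (II) lean on, and Definition \ref{y92}(c) is worded (as you observe) so that essential non-diagonality survives restriction to Markov subsystems, which is exactly what Part (III) needs in order to invoke the B\'ar\'any--Hochman--Rapaport results quoted as Theorems \ref{z01} and \ref{z00}. As a strategy this is the intended one.

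Where your proposal is thin is the same place you flag as the crux. In Part (III) you assert that the Ledrappier--Young/BHR scheme ``in its graph-directed form applied to a near-optimal essentially non-diagonal Markov subsystem'' gives $\dim_H\Pi_*\mu\ge D(\mu)$ for an arbitrary ergodic $\mu$ on $\Lambda$, but the equilibrium measures $\mu_B$ you produce in Part (I) live on the subsystems $B$, not on $\Lambda$, and a general ergodic $\mu$ on $\Lambda$ need not be supported on any Markov $B$. What the argument actually buys is $\dim_H\Lambda\ge\dim_H\Lambda_B\ge D(\mu_B)\ge s_B$ for each Markov $B$, hence $\dim_H\Lambda\ge s_0$; the chain of inequalities then forces $\sup_\mu D(\mu)=s_0$ a posteriori rather than via a direct per-measure bound. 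Your Part (II) also glosses over why the geometric stopping time gives the Hofbauer pressure and not the naive one; the paper's discussion around \eqref{eqn:presnonmarkov} suggests this is genuinely delicate and is one of the things deferred to Part II.
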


\appendix

\section{Thermodynamical formalism}\label{z31}
First we introduce the subshift of finite type.
\subsection{Subshift of finite type}\label{z30}
Let  $\ybox{\Sigma =\left\{1,\dots ,m\right\}^\mathbb{N}}$ be endowed with the usual topology, which generated by the distance $\mathrm{dist}(\mathbf{i},\mathbf{j}):
=m^{-\left|\mathbf{i}\wedge\mathbf{j}\right|}$,
where
   \structure{ $$\left|\mathbf{i}\wedge\mathbf{j}\right|=\max\left\{n:\forall |\ell|\leq n, i_\ell=j_\ell\right\}.$$}
 For some $k<r$ we write
$\ybox{\left[\mathbf{i}\right]_{k,r}=\left\{\mathbf{j}\in \Sigma:
i_\ell=j_\ell,\ \forall \ell\in\left\{k,\dots ,r\right\} \right\}}
$ for the $(k,r)$ cylinder sets. If $k=1$ then we write simply $[\mathbf{i}]_r$.
Similarly,
$$[i_1, \dots ,i_n]:=\left\{\mathbf{j}\in \Sigma:i_k=j_k, \forall k=1, \dots ,n\right\}.$$ For an $\mathbf{i}\in\Sigma $ we write
\begin{equation}\label{z63}
  \mathbf{i}|_n:=(i_1, \dots ,i_n) \in (1, \dots ,m)^{n}=:\Sigma_n.
\end{equation}

    \begin{definition}[\yboxduma{subshift of finite type}]\label{z58}
   Given an  $m\times m$ matrix  $A$ of $0$'s and $1$'s.  Let\pause \
  \structure{ $
   \Sigma _A:=\left\{\mathbf{i}\in \Sigma :A_{i_k,i_{k+1}}=1,\ \forall k\in \mathbb{N}\right\}
 $} and let $\sigma$ be the left shift on $\Sigma_A$. That is $\sigma(i_1,i_2,i_3 \dots ):=(i_2,i_3. \dots )$ for every $(i_0,i_1,i_2, \dots )\in\Sigma_A$. Clearly, $\sigma(\Sigma_A)=\Sigma_A$ and $\sigma|_{\Sigma_A}$ is a homeomorphism on $\Sigma_A$. Sometimes we call $\sigma|_{\Sigma_A}$
 \emph{topological Markov chain}.
 \end{definition}
 We always assume that for every $k\in \left\{1,\dots ,m\right\}$ there exist some $\mathbf{i}\in \Sigma _A$
such that $i_0=k$.\pause \
From now on we call
\begin{itemize}
  \item \structure{ $(\Sigma,\sigma )$} a \structure{full shift} and
  \item \alert{$(\Sigma_A,\sigma )$} as \alert{subshift of finite type}.\pause \
\end{itemize}

Also for the rest of this Section  we assume that $A$ is an $m\times m$ \alert{primitive matrix}.
$$
\clrgreen{\Sigma _{A,n}:=\left\{\mathbf{i}=(i_1,\dots ,i_{n}):[i_1,\dots ,i_{n}]\cap \Sigma _A\ne\emptyset \right\}}.
$$

\subsection{Ergodic measures}
Given a measurable self-map $T$ of a measurable space $(X,\mathcal{B})$. That is $T:X\to X$ and $T^{-1}B\in\mathcal{B}$ for every $B\in\mathcal{B}$. We write
\begin{itemize}
  \item $\mathcal{M}(X)$ for the set of Borel probability measures on $(X,\mathcal{B})$,
  \item $\mathcal{M}_T(X)$ for the set of  invariant measures. That is \newline $$\mathcal{M}_T(X)=\left\{\mu\in\mathcal{M}(X)
      :\mu(A)=\mu(T^{-1}A),\ \forall A \in\mathcal{B}\right\},$$
  \item  $\mathcal{E}_T(X)$ for the ergodic measures. That is
  $$
  \mathcal{E}_T(X)=\left\{\mu\in\mathcal{M}_T(X)
      :A=T^{-1}A \Longrightarrow \mbox{ either }
      \mu(A)=0, \mbox{ or } \mu(A)=1
      \right\}.
  $$
\end{itemize}
We frequently use Birkhoff's Ergodic Theorem.
\begin{theorem}[Birkhoff's Ergodic Theorem]
Let $\mu\in\mathcal{E}_T(X)$ and  let $f\in L^1(X,\mu)$.
Then for $\mu$-almost all $x\in X$ the ergodic averages converge both in $L^1$ and pointwise:
\begin{equation}\label{z64}
  \lim\limits_{n\to\infty} \frac{1}{n}
  \sum\limits_{k=0}^{n-1}
  f(T^k(x))=\int f(x)d\mu(x).
\end{equation}
\end{theorem}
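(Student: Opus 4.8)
The plan is to prove this along the classical route via the Maximal Ergodic Theorem. First I would reduce to real-valued $f$ (treating real and imaginary parts separately) and write $S_n f(x) := \sum_{k=0}^{n-1} f(T^k x)$, $S_0 f := 0$. Introduce the measurable, $T$-invariant functions
\[
\overline{f}(x) := \limsup_{n\to\infty}\tfrac1n S_n f(x), \qquad \underline{f}(x) := \liminf_{n\to\infty}\tfrac1n S_n f(x),
\]
invariance being immediate from $\tfrac1n S_n f = \tfrac1n f + \tfrac{n-1}{n}\big(\tfrac{1}{n-1}S_{n-1}f\big)\circ T$. The goal is to show $\overline f = \underline f = \int f\,d\mu$ $\mu$-a.e.\ and then to upgrade to $L^1$ convergence. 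The step I expect to be the main obstacle is the Maximal Ergodic Theorem: for $g \in L^1(\mu)$, setting $E := \{x : \sup_{n\ge 1} S_n g(x) > 0\}$, one has $\int_E g\,d\mu \ge 0$.

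For the Maximal Ergodic Theorem I would use Garsia's argument. Put $M_N g := \max_{0\le n\le N} S_n g \ge 0$ and $E_N := \{M_N g > 0\}$; since $E_N \uparrow E$ and $|g\,\mathbf 1_{E_N}| \le |g| \in L^1$, it suffices to show $\int_{E_N} g\,d\mu \ge 0$ for each $N$. For $1 \le n \le N$ we have $S_n g = g + (S_{n-1}g)\circ T \le g + (M_N g)\circ T$; taking the maximum over such $n$, and observing that on $E_N$ this maximum equals $M_N g$ (since $S_0 g = 0 < M_N g$ there), we obtain $g \ge M_N g - (M_N g)\circ T$ on $E_N$. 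Integrating over $E_N$ and using $(M_N g)\circ T \ge 0$, the $T$-invariance of $\mu$, and $M_N g = 0$ off $E_N$, yields $\int_{E_N} g\,d\mu \ge \int_{E_N} M_N g\,d\mu - \int_X M_N g\,d\mu = 0$.

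Next I would show $\overline f = \underline f$ a.e. For rationals $\alpha > \beta$, let $A_{\alpha,\beta} := \{\underline f < \beta < \alpha < \overline f\}$, which is $T$-invariant and measurable. Applying the Maximal Ergodic Theorem to $g := (f - \alpha)\mathbf 1_{A_{\alpha,\beta}} \in L^1(\mu)$ — for which the corresponding set $E$ is precisely $A_{\alpha,\beta}$, because on the invariant set $A_{\alpha,\beta}$ one has $\sup_n \tfrac1n S_n f > \alpha$ while $g$ vanishes elsewhere — gives $\int_{A_{\alpha,\beta}} f\,d\mu \ge \alpha\,\mu(A_{\alpha,\beta})$. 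The symmetric application to $(\beta - f)\mathbf 1_{A_{\alpha,\beta}}$ gives $\int_{A_{\alpha,\beta}} f\,d\mu \le \beta\,\mu(A_{\alpha,\beta})$. As $\alpha > \beta$, this forces $\mu(A_{\alpha,\beta}) = 0$, and a countable union over $\alpha > \beta$ in $\mathbb Q$ shows that $f^*(x) := \lim_n \tfrac1n S_n f(x)$ exists for $\mu$-a.e.\ $x$. Since $f^*$ is $T$-invariant and $\mu$ is ergodic, $f^*$ equals a constant $c$ $\mu$-a.e.

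Finally I would identify $c$ and obtain $L^1$ convergence. For bounded $f$, the averages $\tfrac1n S_n f$ are uniformly bounded and converge a.e.\ to $c$, so dominated convergence gives $\tfrac1n S_n f \to c$ in $L^1$ and $c = \lim_n \tfrac1n\int S_n f\,d\mu = \int f\,d\mu$. For general $f \in L^1(\mu)$, given $\varepsilon > 0$ choose a bounded $h$ with $\|f - h\|_1 < \varepsilon$; the averages $\tfrac1n S_n(f - h)$ also converge a.e.\ to a constant, which by Fatou's lemma has absolute value at most $\|f-h\|_1 < \varepsilon$, while $\|\tfrac1n S_n(f-h)\|_1 \le \|f-h\|_1$. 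Combining with the bounded case for $h$ and with $\big|\int h - \int f\big| < \varepsilon$, and letting $\varepsilon \to 0$, gives $\tfrac1n S_n f \to \int f\,d\mu$ both $\mu$-a.e.\ and in $L^1$. The only genuinely nontrivial ingredient is the Maximal Ergodic Theorem; the remainder is manipulation of invariant sets together with a routine truncation argument.
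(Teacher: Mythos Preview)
Your proof is correct and follows the classical route: Garsia's proof of the Maximal Ergodic Theorem, the standard $A_{\alpha,\beta}$ argument to show the $\limsup$ and $\liminf$ agree, ergodicity to conclude the limit is constant, and a truncation to identify the constant and obtain $L^1$ convergence. Each step is carried out properly; in particular, your use of the $T$-invariance of $A_{\alpha,\beta}$ to ensure that the maximal set $E$ for $g=(f-\alpha)\mathbf 1_{A_{\alpha,\beta}}$ is exactly $A_{\alpha,\beta}$ is the right observation, and the Fatou step in the truncation argument is handled correctly.

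There is nothing to compare to in the paper itself: Birkhoff's Ergodic Theorem is stated there as a classical tool in the appendix on thermodynamical formalism, without proof or reference to a specific argument. The paper simply uses it (for instance in the heuristic computation of the dimension of self-similar measures), so your write-up supplies a proof where the paper gives none.
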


\subsection{Entropy}
One of the basic concepts of the thermodynamical formalism is the entropy. There is measure theoretical and topological entropy. Here we just present the definitions and a basic property. For further reading we recommend
 \cite{Bowen75}, \cite{Walters82} and a very detailed introduction is given in \cite{przytycki2010conformal}.

 \subsubsection{Measure theoretical entropy on $(\Sigma_A,\sigma)$ for an ergodic measure}

First we define the measure theoretical entropy on $\Sigma_A$ for an ergodic (with respect to the left shift $\sigma$) measure. (We always assume that $A$ is a primitive matrix.)

\begin{definition}[Entropy (measure theoretical)]\label{z66}
  Let $\mu$ be an ergodic measure on $\Sigma_{A}$. We can define the entropy of $\mu$ as
  \begin{equation}\label{z65}
  h(\mu):=  \lim\limits_{n\to\infty} \frac{1}{n}\sum
    \limits_{\pmb{\omega}\in{\Sigma_{A,n}}}
    \mu([\pmb{\omega}])\log\mu([\pmb{\omega}]).
  \end{equation}
\end{definition}
\begin{theorem}[Shannon Breiman McMillian Theorem]
  Let $\mu\in\mathcal{E}_\sigma(\Sigma)$. Then for $\mu$-almost all
  $\mathbf{i}\in\Sigma_A$ we have
  \begin{equation}\label{z62}
    \lim\limits_{n\to\infty}
    \frac{1}{n}\log\mu[\mathbf{i}|_n]=h(\mu).
  \end{equation}
\end{theorem}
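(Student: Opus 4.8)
The plan is to prove the Shannon--Breiman--McMillan theorem along the classical lines due to Breiman, combining Birkhoff's ergodic theorem with the increasing--martingale (L\'evy upward) convergence theorem and an elementary maximal inequality. Write $\mathcal{P}=\{[1],\dots,[m]\}$ for the partition of $\Sigma_A$ into $1$-cylinders, so that $[\mathbf{i}|_n]$ is precisely the atom of $\bigvee_{j=0}^{n-1}\sigma^{-j}\mathcal{P}$ containing $\mathbf{i}$. Let $\mathcal{B}_k$ be the $\sigma$-algebra generated by the coordinates $i_2,\dots,i_{k+1}$ (with $\mathcal{B}_0$ trivial), let $\mathcal{B}_\infty=\bigvee_{k\ge0}\mathcal{B}_k$, and set
\[
f_k(\mathbf{i}):=-\log\mathbb{E}_\mu[\mathbf{1}_{[i_1]}\mid\mathcal{B}_k](\mathbf{i}),\qquad f_\infty(\mathbf{i}):=-\log\mathbb{E}_\mu[\mathbf{1}_{[i_1]}\mid\mathcal{B}_\infty](\mathbf{i}).
\]
From the factorization $\mu([i_1,\dots,i_n])=\mu\big([i_1]\mid[i_2,\dots,i_n]\big)\cdot\mu([i_2,\dots,i_n])$ together with the $\sigma$-invariance of $\mu$, an induction on $n$ gives the telescoping identity
\[
-\log\mu[\mathbf{i}|_n]=\sum_{j=0}^{n-1}f_{n-1-j}(\sigma^{j}\mathbf{i}).
\]

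Next I would collect the needed facts about the $f_k$. Since $\big(\mathbb{E}_\mu[\mathbf{1}_{[i_1]}\mid\mathcal{B}_k]\big)_k$ is a bounded martingale, L\'evy's theorem gives $f_k\to f_\infty$ both $\mu$-a.e.\ and in $L^1$. Applying Doob's maximal inequality to this martingale atom by atom produces the tail bound $\mu(\sup_{k}f_k>\lambda)\le m\,e^{-\lambda}$, so that $F^{*}:=\sup_{k\ge0}f_k$ lies in $L^1(\mu)$; consequently $f_\infty\le F^{*}\in L^1$, and the functions $F_N:=\sup_{k\ge N}|f_k-f_\infty|$ satisfy $F_N\le 2F^{*}\in L^1$ and $F_N\downarrow0$ a.e., hence $\int F_N\,d\mu\to0$ by dominated convergence. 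Finally, by the conditional form of the entropy and the fact that $\mathcal{P}$ generates the Borel $\sigma$-algebra of $\Sigma_A$,
\[
\int f_\infty\,d\mu=H_\mu(\mathcal{P}\mid\mathcal{B}_\infty)=\lim_{k\to\infty}H_\mu\Big(\mathcal{P}\,\Big|\,\bigvee_{j=1}^{k}\sigma^{-j}\mathcal{P}\Big)=h(\mu,\mathcal{P})=h(\mu).
\]

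Then I would pass to the limit. Birkhoff's ergodic theorem applied to $f_\infty$ gives $\tfrac1n\sum_{j=0}^{n-1}f_\infty(\sigma^j\mathbf{i})\to h(\mu)$ for $\mu$-a.e.\ $\mathbf{i}$, and applied to $F_N$ and $F_0:=\sup_k|f_k-f_\infty|$ gives the corresponding convergences. Using the identity of the first paragraph, for $n>N$ the difference $\big|\tfrac1n\sum_{j=0}^{n-1}f_{n-1-j}(\sigma^j\mathbf{i})-\tfrac1n\sum_{j=0}^{n-1}f_\infty(\sigma^j\mathbf{i})\big|$ is bounded by $\tfrac1n\sum_{j=0}^{n-1-N}F_N(\sigma^j\mathbf{i})+\tfrac1n\sum_{j=n-N}^{n-1}F_0(\sigma^j\mathbf{i})$: in the first sum the index $n-1-j\ge N$, so $|f_{n-1-j}-f_\infty|\le F_N$, while the second sum estimates the last $N$ terms crudely by $F_0$. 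The first sum is $\le\tfrac1n\sum_{j=0}^{n-1}F_N(\sigma^j\mathbf{i})$, whose $\limsup$ in $n$ is $\int F_N\,d\mu$; the second sum is the difference of the Birkhoff sums of $F_0$ over $[0,n)$ and over $[0,n-N)$ divided by $n$, hence tends to $0$ as $n\to\infty$ for each fixed $N$. Letting $n\to\infty$ and then $N\to\infty$ yields $\tfrac1n\sum_{j=0}^{n-1}f_{n-1-j}(\sigma^j\mathbf{i})\to h(\mu)$, i.e.\ $-\tfrac1n\log\mu[\mathbf{i}|_n]\to h(\mu)$ for $\mu$-a.e.\ $\mathbf{i}$, which is \eqref{z62}.

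I expect the only genuinely nontrivial step to be the maximal inequality guaranteeing $\sup_k f_k\in L^1(\mu)$: everything else is a martingale convergence statement, an invocation of Birkhoff, or bookkeeping with the boundary terms $j\in[n-N,n)$. On the finite alphabet $\{1,\dots,m\}$ this maximal estimate is inexpensive (one gets the clean bound $m\,e^{-\lambda}$), but it must not be omitted, since the dominated-convergence passages that make the last two displays go through rest entirely on it.
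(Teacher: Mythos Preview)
The paper does not actually prove this theorem: immediately after the statement it writes ``For the proof see \cite{Bowen75}.'' So there is nothing to compare your argument against on the paper's side.

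Your proposal is the classical Breiman proof and is correct. The telescoping identity, the L\'evy upward convergence of the conditional densities, the Chung--Neveu type maximal bound $\mu(\sup_k f_k>\lambda)\le m\,e^{-\lambda}$ ensuring $F^\ast\in L^1$, and the $N$-truncation splitting followed by Birkhoff are exactly the standard ingredients, and you handle the boundary block $j\in[n-N,n)$ correctly by writing it as a difference of two Birkhoff sums of $F_0\in L^1$. One purely cosmetic point: the paper's Definition~\ref{z66} omits the customary minus sign, so in its convention $h(\mu)\le0$ and \eqref{z62} reads $\tfrac{1}{n}\log\mu[\mathbf{i}|_n]\to h(\mu)$; your last line has $-\tfrac{1}{n}\log\mu[\mathbf{i}|_n]\to h(\mu)$ under the standard convention, which is the same statement after adjusting signs.
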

For the proof see \cite{Bowen75}.
\begin{example}
 \begin{description}
   \item[(a)]\emph{Bernoulli shift}. Given a probability vector $\mathbf{p}:=(p_1, \dots ,p_m)$, where $p_i$ and $\sum\limits_{i=1}^{m}p_i=1$.Then we say the $\mu:=\mathbf{p}^{\mathbb{N}}$ is the \emph{Bernoulli measure} corresponding to $\mathbf{p}$. It is easy to see that
       \begin{equation}\label{z61}
         h(\mu)=-\sum\limits_{i=1}^{m}p_i\log p_i.
       \end{equation}
   \item[(b)] \emph{Markov Shift}
   Given a stochastic matrix $P=(p_{i,j})_{1 \leq i,j \leq m}$. That is $\sum\limits_{j=1}^{m}p_{i,j}=1$, $p_{i,j} \geq 0$. We assume that $P$ is primitive (it was enough to assume less).
    Then by Perron Frobenius Theorem there exists a left eigenvector $\mathbf{p}=(p_1, \dots ,p_m)$ which is a probability vector, such that $\mathbf{p}^T \cdot P=\mathbf{p}^T$, ($\mathbf{p}$ is considered as a column vector). We define the \emph{Markov measure} $\mu$ on $\Sigma$
    corresponding to $(\mathbf{p},P)$
    by
    $\mu([\pmb{\omega}]):=p_{\omega_1} \cdot p_{\omega_1,\omega_2}\cdots p_{\omega_{n_1},\omega_{\omega_n}}$,
    where $\pmb{\omega}\in\Sigma_n$ and $\pmb{\omega}=(\omega_1, \dots ,\omega_n)$. Then
    \begin{equation}\label{z60}
      h(\mu)=-\sum\limits_{i,j=1}^{m}
      p_ip_{i,j}\log p_{i,j}
    \end{equation}
   \item[(c)] \emph{Parry measure} Let $A=(a_{i,j})_{1 \leq i,j \leq m}^m$ be an primitive matrix (to assume irreduciblity was enough again) whose entries belong to $\left\{0,1\right\}$. Then we define the canonical Markov measure as follows: Let $\lambda$ be the largest (Perron-Frobenius) eigenvalue. Let $\mathbf{u}:=(u_1, \dots ,u_m)$ and
       $\mathbf{v}:=(v_1, \dots ,v_m)$ be the left and right (positive) eigenvectors  satisfying $\sum\limits_{i=1}^{m}u_i=1$ and $\sum\limits_{i=1}^{m}u_iv_i=1$ (see \cite[p. 16]{Walters82}). Then we define
       \begin{equation}\label{z59}
 p_i:=u_iv_i \mbox{ and }\quad
 p_{i,j}:=\frac{a_{i,j}v_j}{\lambda v_i}
       \end{equation}
       Let $\mu$ be the Markov measure corresponding to $(\mathbf{p},P)$.
       Then the unique measure on $\Sigma_A$ with maximal entropy is $\mu$ and
       $h(\mu)=\log\lambda$.
 \end{description}
\end{example}

\subsubsection{Topological entropy on compact metric spaces for
continuous mappings}

Now we give the definition of the topological entropy in a more general setup
(see e.g. \cite[p. 165-170]{de2012one} ).
\begin{definition}[Topological entropy]\label{z78}
  Given a homeomorphism $T$ of the  compact metric space $(X,d)$.
For $\varepsilon >0$ we say the orbits of length $n$
$$
\alert{x,T(x),\dots ,T^{n-1}(x)}\mbox{ and }
\structure{y,T(y),\dots ,T^{n-1}(y)}
$$
are \yboxduma{the same with $\varepsilon $-precision}
if
$$
\ybox{d(T^{i}(x),T^{i}(y))<\varepsilon, \quad \forall i=0,\dots ,n-1}.
$$
Fix an $\varepsilon >0$ and an $n\in \mathbb{N}$.
Let $\alert{s_n(x,\varepsilon )}$ be the maximal number of $n$-orbits which are different with $\varepsilon$-precision. Then we define the topological pressure
of $T$ by
\begin{equation}\label{z77}
  \ybox{\alert{h_{\mathrm{top}}(T)=
\lim\limits_{\varepsilon \to 0}\limsup\limits_{n\to\infty}\frac{1}{n}
\log s_n(\varepsilon )}}
\end{equation}
We remark that this is not the most common way to define the topological entropy.
\end{definition}
\begin{theorem}
  Let $T:X\to X$ be a contiuous map of a compact metric space. then $h_{\mathrm{top}}(T)=\sup\left\{h_T(\mu):\mu \mbox{ is an invariant measure for }T\right\}$.
\end{theorem}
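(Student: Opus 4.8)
The plan is to prove the two inequalities $\sup_{\mu}h(\mu)\le h_{\mathrm{top}}(T)$ and $\sup_{\mu}h(\mu)\ge h_{\mathrm{top}}(T)$ separately, following the classical scheme (cf. Walters), with Misiurewicz's refinement‑free construction for the harder half; here $h(\mu)=\sup_{\xi}h(\mu,\xi)$ is the measure‑theoretic entropy of Definition~\ref{z66}, the supremum running over finite Borel partitions $\xi$, with $h(\mu,\xi)=\lim_n\frac1n H_\mu\bigl(\bigvee_{i=0}^{n-1}T^{-i}\xi\bigr)$. Throughout I use that $h_{\mathrm{top}}(T)$ may equally be computed with minimal $(n,\varepsilon)$‑spanning sets, since $\mathrm{span}_n(\varepsilon)\le s_n(\varepsilon)\le\mathrm{span}_n(\varepsilon/2)$. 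For the easy inequality, fix a $T$‑invariant $\mu$ and a finite partition $\alpha=\{A_1,\dots,A_k\}$; by inner regularity choose compact $B_j\subseteq A_j$ with $\mu(A_j\setminus B_j)$ as small as desired, set $B_0=X\setminus\bigcup_{j\ge1}B_j$, $\beta=\{B_0,B_1,\dots,B_k\}$, so that a standard conditional‑entropy estimate gives $h(\mu,\alpha)\le h(\mu,\beta)+1$. Since $B_1,\dots,B_k$ are compact and pairwise disjoint, $\delta:=\tfrac13\min_{i\ne j\ge1}\mathrm{dist}(B_i,B_j)>0$, and a minimal $(n,\delta)$‑spanning set, together with the observation that a point landing in some $B_j$ ($j\ge1$) at time $i$ has all its $2\delta$‑close companions landing in $B_j\cup B_0$ at that time, bounds the number of positive‑measure atoms of $\bigvee_{i=0}^{n-1}T^{-i}\beta$ by $\mathrm{span}_n(\delta)$ times a combinatorial factor whose exponential rate tends to $0$ as $\mu(B_0)\to0$ (it only records the visits to $B_0$, of Birkhoff frequency $\approx\mu(B_0)$). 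Hence $h(\mu,\beta)\le h_{\mathrm{top}}(T)+o(1)$; letting $\mu(B_0)\to0$ and then taking the supremum over $\alpha$ gives $h(\mu)\le h_{\mathrm{top}}(T)$.

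For the reverse inequality, fix $\varepsilon>0$ and a subsequence $n_\ell\to\infty$ realising $h(\varepsilon):=\limsup_n\frac1n\log s_n(\varepsilon)$. For each $n$ let $E_n$ be an $(n,\varepsilon)$‑separated set with $|E_n|=s_n(\varepsilon)$, and set $\sigma_n=\frac1{|E_n|}\sum_{x\in E_n}\delta_x$, $\mu_n=\frac1n\sum_{j=0}^{n-1}(T^j)_*\sigma_n$. By weak‑$*$ compactness of $\mathcal{M}(X)$ we may pass to a further subsequence with $\mu_{n_\ell}\to\mu$, and the telescoping bound $|\!\int g\circ T\,d\mu_n-\int g\,d\mu_n|\le\frac{2\|g\|_\infty}{n}$ shows $\mu\in\mathcal{M}_T(X)$. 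Choose a finite partition $\mathcal{P}=\{P_1,\dots,P_r\}$ with $\mathrm{diam}\,P_i<\varepsilon$ and $\mu(\partial P_i)=0$ for all $i$ (cover $X$ by finitely many small balls whose boundary spheres are $\mu$‑null, then disjointify). Since $E_n$ is $(n,\varepsilon)$‑separated, distinct points of $E_n$ lie in distinct atoms of $\bigvee_{i=0}^{n-1}T^{-i}\mathcal{P}$, so $H_{\sigma_n}\bigl(\bigvee_{i=0}^{n-1}T^{-i}\mathcal{P}\bigr)=\log s_n(\varepsilon)$. The standard blocking argument — for each residue $0\le j<q$ write $\{0,\dots,n-1\}$ as the progression $\{j,j+q,\dots\}$ plus an edge set of size $\le 2q$, apply subadditivity of static entropy to the (non‑invariant) $\sigma_n$, sum over $j$ avoiding double counting, and only then invoke concavity of $\nu\mapsto H_\nu(\cdot)$ with $\mu_n=\frac1n\sum_j(T^j)_*\sigma_n$ — gives, for every fixed $q\ge1$,
\[
\frac{q}{n}\log s_n(\varepsilon)\ \le\ H_{\mu_n}\!\Bigl(\bigvee_{i=0}^{q-1}T^{-i}\mathcal{P}\Bigr)+\frac{2q^2\log r}{n}.
\]
Every atom of the fixed partition $\bigvee_{i=0}^{q-1}T^{-i}\mathcal{P}$ has $\mu$‑null boundary (because $\partial(T^{-i}P)\subseteq T^{-i}\partial P$ and $\mu$ is $T$‑invariant), so $H_{\mu_{n_\ell}}\to H_\mu$ on this partition; letting $\ell\to\infty$ and then $q\to\infty$ yields $h(\varepsilon)\le h(\mu,\mathcal{P})\le h(\mu)\le\sup_\nu h(\nu)$. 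Finally $h(\varepsilon)\uparrow h_{\mathrm{top}}(T)$ as $\varepsilon\downarrow0$, so $\sup_\nu h(\nu)\ge h_{\mathrm{top}}(T)$, and combining the two halves proves the theorem.

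The routine parts are the easy inequality and the construction of $\mathcal{P}$ (both quotable essentially verbatim from the literature). The delicate point is the displayed blocking–concavity estimate in the hard half: one must keep the pushforwards $(T^{j+lq})_*\sigma_n$ separate since $\sigma_n$ is not $T$‑invariant, organise the $q$ residue classes so that no index is counted twice and the edge error is genuinely $O(q^2/n)$, and apply concavity of static entropy at exactly the step where the average over $j$ reassembles $\mu_n$. Everything else — weak‑$*$ compactness, Birkhoff's theorem (see \eqref{z64}), subadditivity and concavity of entropy, and the elementary relation between separated and spanning numbers — is standard bookkeeping.
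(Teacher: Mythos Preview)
Your argument is a correct outline of the classical variational principle, following Walters/Misiurewicz essentially line by line: Goodwyn's inequality via compact refinements of a partition for the easy half, and Misiurewicz's separated-set/weak-$*$-limit/blocking argument for the hard half. The paper itself does not give a proof of this theorem at all; it merely states it and points the reader to \cite{Bowen75} and \cite{Walters82}, so what you have written is precisely the standard textbook proof that the paper is implicitly citing.
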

We defined the measure theoretical entropy only on subshift of finite type. The definition in the general case is similar see e.g. \cite{Bowen75} and \cite{Walters82}.
Before we give some examples we need the following definition that will also be used later.
\begin{definition}\label{z56}
Let $T:I\to I$, where $I \subset \mathbb{R}$ is an interval.
 \begin{itemize}
  \item We say that $T$ is a piecewise monotone map if there is a finite partition of $I$ such that on every class of this partition the map  $T$ is monotone.
  \item Let $T$ be a piecewise monotone map. The the \yboxduma{lap number} $\alert{\ell(T)}$ is the number of maximal monotonicity intervals of $T$.
\end{itemize}
\end{definition}

\begin{example}\label{z57}
  \begin{description}
    \item[(a)] For a subshift of finite type $(\Sigma_A,\sigma)$ the topological entropy of $\sigma$ is $\log\lambda$, where $\lambda$ is the largest eigenvalue of the primitive $0,1$ matrix $A$.
    \item[(b)] Here we use the notation of Definition \ref{z57}.
    It follows from a theorem of Misiurewicz and Szlenk that for a piecewise monotone map $T$, we have
    \begin{equation}\label{z55}
      h(T)=\lim\limits_{n\to\infty} \frac{1}{n}\log\ell (T^n),
    \end{equation}
    where $T^n$ is the $n$-fold composition of $T$. In particular, $h(T) \leq \ell (T)$. Moreover, if $T$ is piecewise affine and   its the slope of $\pm s$ at every point (except the turning points) then $h(T)=\max\left\{0,\log s\right\}$.
  \end{description}
(See \cite{de2012one} for the proofs.)
\end{example}

\subsection{Lyapunov exponent}
To define the Lyapunov exponents we need Oseledec Theorem. The following version of Oseledec Theorem is from Krengel's book \cite[p. 42-47]{krengel2011ergodic} where the proof is also presented.
  Given a finite measure space $(\Omega
,\mathcal{A},\mu )$ and $\tau:\Omega\to\Omega$ measure preserving.  Further, $M$ denotes the set of $r\times r$
matrices. Put
$$
P_n(A,\omega ):=A(\tau ^{n-1}\omega )\cdots A(\tau \omega )A(\omega
).
$$

\begin{theorem}[Oseledec]
Legyen $A:\Omega \to M$ be measurable and we assume that
\begin{equation}\label{o3}
\log^+\|A(\cdot )\| \in L_1(\mu) .
\end{equation}
Then there exists an invariant $\Omega '\subset \Omega $ which has
full $\mu$-measure such that
\begin{enumerate}
 \item $$ \lim\limits_{n\to\infty}\left(P^*_n(A,\omega )\cdot P_n(A,\omega )
  \right)^{1/2n}=:\Lambda (\omega )$$
  exists and $\Lambda $ is a symmetric positive semidefinite matrix.
  \item Let $\exp(\lambda_1(\omega ))>\cdots >\exp(\lambda_s(\omega ))$
  are the  different eigenvalues of $\Lambda $ and let $E_{\nu} $ be
  the eigenspace of $\Lambda $
  which belongs to $\exp \lambda_\nu (\omega )$. Then for
  $$
H_\nu(\omega) :=E_s(\omega )\bigoplus E_{s-1}(\omega )\bigoplus\cdots\bigoplus E_{s+1-\nu} (\omega )
  $$
we have
\begin{equation}\label{o4}
\lim\limits_{n\to\infty}\frac{1}{n}\log%
\|P_n(A,\omega )\mathbf{v} \|=\lambda_{s+1-\nu} (\omega ),\qquad \forall
\mathbf{v}\in H_\nu (\omega )\setminus H_{\nu -1}(\omega ),
\end{equation}
where $H_0(\omega)\equiv  \emptyset $.
  \item $\omega \mapsto {\rm dim}E_\nu (\omega )$ and %
  $\omega \mapsto \lambda_\nu (\omega )$ are $\tau $-invariant
  maps and we call ${\rm dim}E_\nu (\omega )$ the multiplicity of $\lambda_{i}(\omega)$.
\end{enumerate}
\end{theorem}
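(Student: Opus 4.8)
The plan is to derive the theorem from Kingman's subadditive ergodic theorem, which is the only nontrivial external input. Submultiplicativity of the operator norm gives
$$
\log\|P_{n+m}(A,\omega)\|\le \log\|P_n(A,\omega)\|+\log\|P_m(A,\tau^{n}\omega)\|,
$$
so, using the integrability hypothesis \eqref{o3}, the sequence $g_n(\omega):=\log\|P_n(A,\omega)\|$ is a subadditive cocycle with $g_1^{+}\in L^1(\mu)$, and Kingman's theorem yields a $\tau$-invariant limit $\bar g_1=\lim_n\tfrac1n g_n$ (possibly $-\infty$), both a.e.\ and in $L^1$; this will be the top exponent $\lambda_1$. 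First I would run the same argument in every exterior power: $\omega\mapsto \bigwedge^{k} A(\omega)$, acting on $\bigwedge^{k}\mathbb{R}^{r}$, is again a subadditive cocycle, and since $\log\|\bigwedge^{k} P_n\|=\sum_{j=1}^{k}\log\alpha_j(P_n)$ with $\alpha_1\ge\dots\ge\alpha_r\ge0$ the singular values of $P_n$ (the square roots of the eigenvalues of $P_n^{*}P_n$), Kingman gives $\tfrac1n\sum_{j\le k}\log\alpha_j(P_n)\to\gamma_k(\omega)$ for $\tau$-invariant functions $\gamma_k$. Telescoping, $\tfrac1n\log\alpha_k(P_n(A,\omega))\to\gamma_k-\gamma_{k-1}$, and the distinct values among these, written $\lambda_1>\dots>\lambda_s$ with multiplicities $d_\nu$, are the Lyapunov exponents. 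Since the $k$-th eigenvalue of $(P_n^{*}P_n)^{1/2n}$ is exactly $\alpha_k(P_n)^{1/n}$, this already shows that the eigenvalues of $(P_n^{*}P_n)^{1/2n}$ converge to $e^{\lambda_1}\ge\dots\ge e^{\lambda_s}$ with the stated multiplicities --- the spectral content of assertions (1) and (2).

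The substantive step is the rest of assertion (1): that the positive semidefinite matrices $T_n(\omega):=(P_n^{*}P_n)^{1/2n}$ themselves converge, equivalently that their eigenprojections stabilise. Here I would follow the now-standard (Raghunathan-type) argument. Its analytic input is that $\tfrac1n\log^{+}\|A(\tau^{n}\omega)\|\to0$ for a.e.\ $\omega$, which follows from \eqref{o3} via Birkhoff's theorem. Writing $P_{n+1}=A(\tau^{n}\omega)P_n$ and comparing the singular value decompositions of successive $P_n$, the displacement of the top eigenspace of $P_n^{*}P_n$ produced by one step is controlled by the ratio $\alpha_2(P_n)/\alpha_1(P_n)$ --- which decays exponentially, at rate $\lambda_2-\lambda_1$, after one passes to a suitable exterior power so that the leading exponent is simple --- multiplied only by the sub-exponential factor carried by $A(\tau^{n}\omega)$ (and treating separately any directions along which the cocycle degenerates, which contribute only exponents equal to $-\infty$). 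Hence the single-step displacements on the relevant Grassmannian are summable with exponentially small tails, the top eigenspace of $P_n^{*}P_n$ is Cauchy with exponential speed, and iterating down the flag of eigenspaces while combining with the eigenvalue convergence above yields the symmetric positive semidefinite limit matrix $\Lambda(\omega)=\lim_n T_n(\omega)$; its distinct eigenspaces are the $E_\nu(\omega)$ and their partial sums the $H_\nu(\omega)$.

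Assertion (2), the growth rate \eqref{o4}, then follows by expanding a vector $\mathbf{v}\in H_\nu(\omega)\setminus H_{\nu-1}(\omega)$ along the eigenprojections of $T_n(\omega)$ in $\|P_n\mathbf{v}\|^2=\langle P_n^{*}P_n\,\mathbf{v},\mathbf{v}\rangle=\langle T_n^{2n}\mathbf{v},\mathbf{v}\rangle$: by construction $\mathbf{v}$ has no component along eigenvalues of $\Lambda(\omega)$ exceeding $e^{\lambda_{s+1-\nu}}$, whereas $\mathbf{v}\notin H_{\nu-1}(\omega)$ forces a component of norm bounded below along $e^{\lambda_{s+1-\nu}}$; using the exponential speed of the eigenprojection convergence from the previous step to bound the exponentially small spurious components, $\langle T_n^{2n}\mathbf{v},\mathbf{v}\rangle$ grows like $e^{2n\lambda_{s+1-\nu}}$ up to sub-exponential factors, so $\tfrac1n\log\|P_n\mathbf{v}\|\to\lambda_{s+1-\nu}$. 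Assertion (3) is then immediate: $P_n(A,\tau\omega)$ differs from $P_{n+1}(A,\omega)$ only by the bounded right factor $A(\omega)$, which disappears in the $\tfrac1n\log$ limit, so the exponents $\lambda_\nu$ and the dimensions $\dim E_\nu$ are $\tau$-invariant.

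The main obstacle is exactly the eigenprojection convergence in the second step. Kingman's theorem delivers the eigenvalue data essentially for free, but upgrading it to convergence of the matrices $T_n$ --- that is, controlling the geometry of the singular directions and not merely their magnitudes --- requires the quantitative ``exponential spectral gap beats sub-exponential single-step perturbation'' estimate, and this is the only place where the proof genuinely uses that $A$ is matrix-valued with $\log^{+}\|A\|\in L^1$, rather than being a soft consequence of subadditivity.
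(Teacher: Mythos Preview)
The paper does not give its own proof of Oseledec's theorem: it merely states the result and refers the reader to Krengel's book \emph{Ergodic Theorems} (pp.~42--47) for the argument. Your sketch is the standard Raghunathan-type proof via Kingman's subadditive ergodic theorem applied to exterior powers, which is in fact the approach Krengel presents, so in that sense you are aligned with the paper's cited source.

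A couple of small remarks on your write-up. In your argument for assertion~(3) you call $A(\omega)$ a ``bounded right factor''; it is not uniformly bounded in $\omega$ under the hypothesis $\log^{+}\|A\|\in L^1(\mu)$, but for a \emph{fixed} $\omega$ in the full-measure set it is a fixed matrix, which is all you need for the $\tfrac1n\log$ limit to be unaffected --- so the conclusion stands, only the wording is slightly loose. Also, the passage ``after one passes to a suitable exterior power so that the leading exponent is simple'' hides the inductive structure of the Raghunathan argument: one really works block by block down the Lyapunov spectrum, and the summability of the Grassmannian displacements uses the gap $\lambda_{\nu}-\lambda_{\nu+1}$ at each stage. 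These are presentational points rather than gaps; the outline is sound.
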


\begin{definition}[Lyapunov exponenets]\label{z53}
Let $\mu$ be an ergodic measure.
Then
 it follows from (3) that for all $i=1, \dots ,s$ and for $\mu$-almost all $\omega\in\Omega$, $\lambda_i(\omega)$ and
 ${\rm dim}E_\nu (\omega )$
   are constants that we call $\lambda_i$ and $d_i$ respectively, for $1, \dots ,s$. We partition the index set
    \begin{equation}\label{z33}
      \left\{1, \dots ,r\right\}=\bigsqcup\limits_{k=1}^{s}\mathcal{I}_k,\quad
      \mathcal{I}_k:=\left\{d_1+\cdots+d_{k-1}+1,\cdots,d_1+\cdots+d_{k-1}
      +d_{k}\right\}
    \end{equation}
    Then we define the Lyapunov exponents $\chi_1 \geq \chi_2 \geq \cdots \geq \chi_r$ as follows:
  \begin{multline}\label{z32}
\underbrace{\chi_{1 }=\cdots=\chi_{d_1 }}_{:=\lambda_1}
 >
 \underbrace{\chi_{d_1+1 }=\cdots=\chi_{d_1+d_2 }}_{:=\lambda_2}
 >
 \underbrace{\chi_{d_1+d_2+1 }=\cdots=\chi_{d_1+d_2+d_3 }}_{:=\lambda_3}
 >
 \cdots     \\
 >
 \underbrace{\chi_{d_1+\cdots+d_{s-2}+1}=\cdots=\chi_{d_1+\cdots+d_{s-2}+d_{s-1} }}_{:=\lambda_{s-1}}
    >
   \underbrace{\chi_{d_1+\cdots+d_{s-1}+1}=\cdots=\chi_{d_1+\cdots+d_{s-1}+d_{s} }}_{:=\lambda_s}.
  \end{multline}
\end{definition}

\subsection{Topological pressure and Gibbs measure}
In this section we always assume that $A$ is a primitive $m\times m$ matrix and we consider the topological Markov chain (or subshift of finite type ) $(\sigma,\Sigma_A)$ as defined in  Definition \ref{z58}
    \begin{definition}[H\"older continuity]
    We say that a function $\phi :\Sigma_A \to \mathbb{R}$ is \yboxduma{H\"older continuous}\pause \  if there exists $b>0$ and $\alpha \in (0,1)$ such that\pause \
\structure{\begin{equation}\label{9}
  \alert{\mathrm{var}_k\phi :=\sup\left\{\left|\phi (\mathbf{i})-\phi (\mathbf{j})\right|:
  |\mathbf{i}\wedge\mathbf{j}|\geq k\right\}\leq b\alpha ^k}.
\end{equation}}
    \end{definition}

The set of H\"older continuous functions on $\Sigma _A$ is denoted by $\ybox{\mathcal{F}_A}$. For a $\clrgreen{\phi }\in \mathcal{F}_A$ and
$\alert{\pmb{\omega}=(\omega_1,\dots ,\omega _n)\in \left\{1,\dots ,m\right\}^n}$
\begin{equation}\label{z74}
S_n\phi (\alert{\pmb{\omega}}):=\sup\left\{\sum\limits_{\ell=0}^{n-1}
\clrgreen{\phi} (\sigma ^\ell\structure{\mathbf{j}}):\structure{\mathbf{j}}\in [\alert{\pmb{\omega}}]\cap \Sigma _A
\right\}.
\end{equation}


First observe that for any $\phi \in \mathcal{F}_A$ satisfying (\ref{9}):
and for any $\structure{\mathbf{j}},\clrgreen{\mathbf{j}'}\in [\alert{\pmb{\omega}}]$, where  $\alert{\pmb{\omega}=(\omega_1,\dots ,\omega _n)\in \Sigma _{A,n}}$ we have
\begin{equation}\label{11}
\left|  \sum\limits_{\ell=0}^{n-1}
\phi (\sigma ^\ell\structure{\mathbf{j}})
  -
  \sum\limits_{\ell=0}^{n-1}
\phi (\sigma ^\ell\clrgreen{\mathbf{j}'})\right|
  \leq \frac{b}{1-\alpha }
\end{equation}
holds for all $n$ and $\pmb{\omega}\in\Sigma_{A,n}$. This yields that the \yboxduma{topological pressure of the potential $\pmb{\clrgreen{\phi }}$} for the topological Markov shift $(\Sigma_A,\sigma)$ is
\begin{equation}\label{57}
 \ybox{P(\pmb{\clrgreen{\phi} })}:=\lim\limits_{n\to\infty}\frac{1}{n}
\log\left(\sum\limits_{\alert{\mathbf{i}}\in \Sigma_{A,n}}
\e{S_n\phi(\mathbf{i})}
\right)
\end{equation}

does not depend on which $\structure{\mathbf{j}}\in [\alert{\mathbf{i}}]$ is chosen. Let $\mathcal{M}_\sigma(\Sigma_A)$ denote the $\sigma$-invariant probability measures on $\Sigma_A$. The so-called Gibbs measure together with the topological pressure play central role in dimension theory:

\begin{theorem}[The Existence of Gibbs Measure Theorem]\label{19}
Suppose that
\begin{itemize}
  \item  $A$ is primitive and
  \item \structure{$\phi \in \mathcal{F}_A$}.
\end{itemize}\pause \
Then there exists a unique  $\mu \in \mathcal{M}_\sigma (\Sigma _A)$\pause \
for which  \structure{$\exists c_1,c_2>0$}\pause \  such that for  \structure{$\forall \mathbf{i}\in \Sigma_A $} and  \structure{$\forall \ell $}:\pause \
\alert{\begin{equation}\label{10}
\alert{c_1\leq
\frac{\mu \left(
\left[\mathbf{i}\right]_{\ell }
\right)}
{\exp\left(-\ell  \cdot P(\phi)+S_\ell  \phi (\mathbf{i})\right)}
\leq c_2,}
\end{equation}}
where recall that we defined $\small{\ybox{\left[\mathbf{i}\right]_{\ell }=
\left\{\mathbf{j}\in \Sigma _A:
i_k=j_k,\ \forall k\in\left\{1,\dots ,\ell \right\} \right\}}.}
$ It can be proved that $\mu$ is mixing, consequently ergodic.
\end{theorem}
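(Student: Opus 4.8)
The plan is to construct $\mu$ from the Ruelle--Perron--Frobenius (transfer) operator and then read off the Gibbs bound, the uniqueness and the mixing from its spectral properties.

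First I would introduce the transfer operator $\mathcal{L}=\mathcal{L}_\phi$ on $C(\Sigma_A)$,
\[
(\mathcal{L}h)(\mathbf{i})=\sum_{\sigma(\mathbf{j})=\mathbf{i}}\e{\phi(\mathbf{j})}h(\mathbf{j}),
\]
the sum being over the finitely many preimages of $\mathbf{i}$; the basic algebraic identity is $\mathcal{L}\big((g\circ\sigma)\,h\big)=g\cdot\mathcal{L}h$. I would then establish the Ruelle--Perron--Frobenius theorem: there are a number $\lambda>0$, a strictly positive function $h\in\mathcal{F}_A$ and a Borel probability measure $\nu$ on $\Sigma_A$ with $\mathcal{L}h=\lambda h$, $\mathcal{L}^{*}\nu=\lambda\nu$ and $\int h\,d\nu=1$, and moreover $\lambda$ is a simple eigenvalue of $\mathcal{L}$, the rest of the spectrum lying in a disc of strictly smaller radius (spectral gap). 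The eigenmeasure $\nu$ comes from a Schauder--Tychonoff fixed point argument applied to $\nu\mapsto\mathcal{L}^{*}\nu/(\mathcal{L}^{*}\nu)(\mathbf{1})$ on the weak-$*$ compact convex set of probability measures; the eigenfunction $h$ and the gap come from showing that some iterate $\mathcal{L}^{N}$ maps a cone of positive functions with a prescribed log-H\"older bound strictly inside itself and contracts the associated Birkhoff projective metric, which is exactly where the primitivity of $A$ and the H\"older estimate \eqref{9} (equivalently the bounded distortion \eqref{11}) are used.

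Next I would set $\mu:=h\,\nu$. It is a probability measure because $\int h\,d\nu=1$, and it is $\sigma$-invariant: for $g\in C(\Sigma_A)$,
\[
\int g\circ\sigma\,d\mu=\int (g\circ\sigma)\,h\,d\nu=\tfrac1\lambda\int \mathcal{L}\big((g\circ\sigma)h\big)\,d\nu=\tfrac1\lambda\int g\,\mathcal{L}h\,d\nu=\int g\,h\,d\nu=\int g\,d\mu .
\]
For the Gibbs bound I would apply the iterated relation $\mathcal{L}^{*\ell}\nu=\lambda^{\ell}\nu$ to the indicator of $[\mathbf{i}]_\ell$: the only preimage branch of length $\ell$ landing in $[\mathbf{i}]_\ell$ corresponds to prepending $i_1\dots i_\ell$, so one gets $\nu([\mathbf{i}]_\ell)\asymp\lambda^{-\ell}\e{S_\ell\phi(\mathbf{i})}$ with multiplicative constants depending only on the distortion constant $b/(1-\alpha)$ of \eqref{11} and on $\min\nu(\sigma^\ell[\mathbf{i}]_\ell)>0$ (finitely many follower sets, $\nu$ of full support by primitivity). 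Multiplying by the two-sided bound on $h$, which is continuous and strictly positive on the compact set $\Sigma_A$ hence bounded away from $0$ and $\infty$, yields \eqref{10} with $\log\lambda$ in place of $P(\phi)$. Summing \eqref{10} over all $\mathbf{i}\in\Sigma_{A,\ell}$ and comparing with the definition \eqref{57} of the pressure then forces $\log\lambda=P(\phi)$.

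Finally, uniqueness and mixing. Two measures satisfying \eqref{10} are comparable on every cylinder, hence mutually absolutely continuous with Radon--Nikodym derivative bounded above and below; once ergodicity is known this forces them to coincide. Ergodicity in turn follows from mixing, which I would deduce from the spectral gap: writing $\lambda^{-n}\mathcal{L}^{n}g=h\int g\,d\nu+O(\rho^{n})$ with $\rho<1$ gives exponential decay of correlations $\int (g_1\circ\sigma^{n})\,g_2\,d\mu\to\big(\int g_1\,d\mu\big)\big(\int g_2\,d\mu\big)$ for H\"older $g_1,g_2$, and a density argument extends the convergence to $L^2(\mu)$, which is mixing. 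The main obstacle is the Ruelle--Perron--Frobenius step, and within it the spectral gap: getting an iterate of $\mathcal{L}$ to map the cone strictly into itself — equivalently, a uniform contraction of the projective metric — is the one place where both hypotheses, primitivity of $A$ and H\"older continuity of $\phi$, are genuinely needed, and it is the technical heart of the argument.
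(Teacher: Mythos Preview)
Your outline is correct and is precisely the classical Ruelle--Perron--Frobenius route to Gibbs measures. Note, however, that the paper does not give its own proof of this statement at all: immediately after the theorem it simply writes ``For the proof see \cite{Bowen75}.'' The approach you describe --- transfer operator, positive eigenfunction and eigenmeasure via cone contraction, $\mu=h\nu$, Gibbs bounds from $\mathcal{L}^{*\ell}\nu=\lambda^\ell\nu$ together with bounded distortion, identification $\log\lambda=P(\phi)$, and mixing from the spectral gap --- is exactly the argument in Bowen's monograph (and in Parry--Pollicott, Ruelle, etc.), so your proposal is not a different route but rather a faithful sketch of the cited reference.
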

We say that $\mu$ is the Gibbs measure for the potential $\phi$.
For the proof see \cite{Bowen75}.

\subsection{The root of the  pressure formula}\label{z76}
Let $\mathcal{F}$ be a conformal IFS on $\mathbb{R}$ as in definition \ref{z79} and we assume that the SSP holds. That is $f_i([0,1])\cap f_j([0,1])= \emptyset $ for all $i\ne j$. Let $\phi_s:\Sigma\to \mathbb{R}$ be
\begin{equation}\label{z67}
  \phi_s(\mathbf{i}):=
  \log |f'_{i_1}(\sigma\mathbf{i})|^s.
\end{equation}
Then for every $\mathbf{i}\in\Sigma$ and $n$ we have
\begin{equation}\label{z75}
  \phi_s(\sigma^{n-1}\mathbf{i})+\cdots+\phi_s(\sigma\mathbf{i})+\phi_s(\mathbf{i})
  =\log |f'_{i_1 \dots i_n}(\Pi(\sigma^n\mathbf{i}))|^s.
\end{equation}
Using this and the Bounded Distortion Property, we obtain that for every $n$ and for every $\pmb{\omega}\in\Sigma_n:=\left\{1, \dots ,m\right\}^n$
\begin{equation}\label{z73}
  s\log c_1<
  \left|
  S_n\phi_s (\alert{\pmb{\omega}})-\log |f'_{i_1 \dots i_n}(\Pi(\sigma^n\mathbf{i}))| ^s
  \right|
 < s\log c_2.
\end{equation}
Hence we get
\begin{equation}\label{z72}
 P(s):= P(\phi_s)=\lim\limits_{n\to\infty} \frac{1}{n} \log
  \sum\limits_{|\pmb{\omega}|=n}
   |f'_{i_1 \dots i_n}(0)|^s,
\end{equation}
It is easy to see that the function $s\mapsto P(\phi_s)$ is positive at zero, negative at $1$, continuous  and strictly decreasing. So it has a unique zero in $(0,1)$. Let us denote this unique zero by $s_{0}$. That is
\begin{equation}\label{z71}
  P(s_{0})=0.
\end{equation}
This is the reason that we say that $s_{0}$ is the root of the pressure formula.

Let $\mu$ be the Gibbs measure for the potential $\phi_{s_{0}}$. Then for every $n$, $\pmb{\omega}\in\Sigma_n$, and $x\in(0,1)$ we have
\begin{equation}\label{z70}
  c_1c_3<
  \frac{\mu([\pmb{\omega}])}{|f'_{\pmb{\omega}}(x)|^{s_0}}
 < c_2c_4.
\end{equation}

\section{Subadditive Pressure}
Falconer introduced subadditive pressure  in \cite{falconer1988hausdorff} and in a more explicit form in \cite[Section 3]{Falc_94_sub_add}.
\begin{definition}[Subadditive pressure]\label{z25}
  Assume that $\psi_n:\Sigma_A\to\mathbb{R}$ , $n=1,2, \dots $ satisfy the following three conditions:
\begin{description}
  \item[(a)] $\psi_{n+m}(\mathbf{i}) \leq \psi_n(\mathbf{i})+\psi_m(\sigma^m\mathbf{i})$, $n,m\in\mathbf{N}$
  \item[(b)]  There exists an $ a>0$ such that
$\left|\frac{1}{n}\psi_n(\mathbf{i})\right| \leq a$, for all $\mathbf{i}\in\Sigma_A, n\in\mathbf{N}$
  \item[(c)]  There exists an $ a>0$ such that
$\left|
\psi_n(\mathbf{i})-\psi_n(\mathbf{j})
\right| \leq b  $
for all $n\in\mathbb{N}$ and $\mathbf{i},\mathbf{j}\in\Sigma_A$.
\end{description}
Foe every $\pmb{\omega}\in\Sigma_{A,n}$ we fix an arbitrary $\mathbf{i}_{\pmb{\omega}}\in[\pmb{\omega}]$. Then
the subadditive pressure associated to $\left\{\psi_n\right\}$ is
\begin{equation}\label{z23}
  P(\left\{\psi_n\right\}):=
\lim\limits_{n\to\infty}
\frac{1}{n}\log\sum\limits_{\pmb{\omega}\in\Sigma_{A,n}}
\exp\left(
\psi_n(\mathbf{i}_{\pmb{\omega}})
\right)
=
\inf\limits_n
\frac{1}{n}\log\sum\limits_{\pmb{\omega}\in\Sigma_{A,n}}
\exp\left(
\psi_n(\mathbf{i}_{\pmb{\omega}})
\right).
\end{equation}
\end{definition}
The the second equality is verified in \cite[Section 3]{Falc_94_sub_add} is a slightly different setup. The connection to the additive pressure is that
\begin{equation}\label{z22}
  P(\left\{\psi_n\right\})=
\lim\limits_{N\to\infty}
\frac{1}{N}
P\left(\sigma^N,\psi_N\right)=
\inf\limits_N  \frac{1}{N}
P\left(\sigma^N,\psi_N\right),
\end{equation}
where $P\left(\sigma^N,\psi_N\right)$ is the additive pressure (defined in \eqref{57}) for the potential $\psi_N$ on the topological Markov shift
$(\Sigma_A,\sigma^N)$.

Most commonly we use this
in the following special case:
\begin{example}\label{z24}
  In the case of the additive pressure
$\psi_n(\mathbf{i})=\sum\limits_{k=0}^{n-1}f(\sigma^n\mathbf{i})$ for a continuous function $f:\Sigma_A\to\mathbb{R}$.
\end{example}

\begin{example}\label{z27}
 Given contracting non-singular $d\times d$
matrices $A_1, \dots ,A_m$ (the linear part of a self-affine IFS  of the form \ref{z38}). Then for every $s \geq 0$ we define
\begin{equation}\label{z26}
  \psi^s_n:\Sigma_A\to\mathbb{R},
\qquad
\psi^s_n(\mathbf{i}):=\log\phi^s(A_{i_1}\cdots A_{i_n}) \mbox{ and }
P(s):=P_{A_1 \dots A_n}(s):=P\left(\left\{\psi_{n}^{s}\right\}\right).
\end{equation}
where $\phi^s$ is the singular value function defined in \eqref{z29}. It is immediate that the function $ s\mapsto P_{A_1 \dots A_n}(s)$ is strictly decreasing, continuous, positive at zero and negative at any $s$ which is large enough. So, it has a unique zero $s_{A_1 \dots A_n}>0$. That is
\begin{equation}\label{z21}
  P_{A_1 \dots A_n}(s_{A_1 \dots A_n})=0.
\end{equation}
\end{example}

\bibliographystyle{plain}
\bibliography{barnsley_08_08}

\end{document}